\documentclass[a4paper, 11pt]{amsart}
\usepackage{amsmath, amsthm, amscd, amssymb, amsfonts, amsxtra, amssymb, latexsym}
\usepackage{verbatim}
\usepackage{graphicx} 
\usepackage{enumerate}
\usepackage{mathrsfs}
\usepackage[mathscr]{euscript}
\usepackage{tikz}
\usepackage{caption}
\usepackage{subcaption}
\usepackage{float}

\usetikzlibrary{arrows.meta, positioning}

\usepackage{hyperref}
\hypersetup{colorlinks = true,	allcolors  = blue}

\usepackage{xcolor}

\newcommand{\sk}{\smallskip}
\newcommand{\msk}{\medskip}

\newcommand{\N}{\mathbb{N}}
\newcommand{\Z}{\mathbb{Z}}
\newcommand{\Q}{\mathbb{Q}}
\newcommand{\R}{\mathbb{R}}
\newcommand{\C}{\mathbb{C}}

\newcommand{\Tr}{\mathrm{Tr}}
\newcommand{\G}{\Gamma}

\numberwithin{equation}{section}

\newtheorem{thm}{Theorem}[section]
\newtheorem{prop}[thm]{Proposition}
\newtheorem{lem}[thm]{Lemma}
\newtheorem{coro}[thm]{Corollary}

\theoremstyle{definition}
\newtheorem{rem}[thm]{Remark}
\newtheorem{exam}[thm]{Example}
\newtheorem{defi}[thm]{Definition}

\begin{document} \sloppy
\title{On di-Cayley graphs and their spectrum}

\author{Paula M.\@ Chiapparoli, Ricardo A.\@ Podest\'a}

\dedicatory{\today}

\keywords{Cayley (sum) graphs, di-Cayley graphs, adjacency matrix, spectrum.}
\thanks{2020 {\it Mathematics Subject Classification.} Primary 05C25; Secondary 05C50, 05C75.}
\thanks{Partially supported by CONICET, FonCyT and SECyT-UNC}

\address{Ricardo A.\@ Podest\'a. 
	\newline 
	FaMAF--CIEM (CONICET), Universidad Nacional de C\'ordoba. 
	\newline 
	Av.\@ Medina Allende 2144, Ciudad Universitaria, (5000), C\'ordoba, Argentina. 
	\newline
	{\it E-mail: ricardo.podesta@unc.edu.ar}}

\address{Paula M.\@ Chiapparoli. 
	\newline 
	FaMAF--CIEM (CONICET), Universidad Nacional de C\'ordoba. 
	\newline 
	Av.\@ Medina Allende 2144, Ciudad Universitaria, (5000), C\'ordoba, Argentina.
	\newline 
	{\it E-mail: paula.chiapparoli@mi.unc.edu.ar}}

\maketitle
	
\begin{abstract}
Given a group $G$ and three subsets $S_\ell, S_r, S_m \subset G$, we consider di-Cayley graphs $DX(G;S_\ell,S_r,S_m)$ and di-Cayley sum graphs $DX^+(G;S_\ell,S_r,S_m)$, directed generalizations of the bi-Cayley (sum) graphs $BX(G;S_\ell,S_r,S_m)$ and $BX^+(G;S_\ell,S_r,S_m)$. We refer to these four kinds of graphs collectively as $X^*(G;S_\ell,S_r,S_m)$. 
First, we give the basic properties of these graphs and compute their adjacency matrices. Then, we obtain the eigenvalues of $X^*(G;S_\ell,S_r,S_m)$ in terms of the spectra of the associated Cayley graphs $X(G,S)$ with $S\in \{S_\ell,S_r,S_m\}$ in two ways, using adjacency matrices and using irreducible characters of $G$. 
\end{abstract}

\section{Introduction} \label{sec: intro}
This work deals with structural and spectral properties of certain bi-Cayley and di-Cayley graphs. In this Introduction,
we first recall the family of bi-Cayley graphs and introduce a directed generalization, the di-Cayley graphs, then we recall the basic spectral definitions for arbitrary graphs; and finally we summarize the main results.

\subsection*{Cayley and Cayley sum graphs}
We denote by $\G=(V,E)$ an arbitrary graph with vertex set $V$ and edge set $E$.
Let $G$ be a group with identity $e$ and $S$ a subset of $G$. A \textit{Cayley graph} 
	$$ X(G,S) $$ 
is the directed graph whose vertex set is $G$ and where $u,v \in G$ form a directed edge (or arc) from $u$ to $v$, 
if $vu^{-1}$ ($v-u$ if $G$ is abelian) is in $S$, that is $v=us$ for some $s\in S$. In symbols
\begin{equation} \label{eq: edges}
	u \sim v \qquad \Leftrightarrow \qquad (u,v) \in E \qquad \Leftrightarrow \qquad  
	vu^{-1} \in S.	
\end{equation}

Analogously, the \textit{Cayley sum graph} 
	$X^+(G,S)$ 
has the same vertex set $G$ but now $u,v \in G$ form a directed edge from $u$ to $v$ if $vu \in S$ ($v+u$ if $G$ is abelian). 
In both graphs, if $(u,v)$ and $(v,u)$ are directed edges, they are usually considered as a single undirected edge $\{u,v\}$.
In both cases, $S$ is usually called the \textit{connection set}. 
The graphs $X(G,S)$ and $X^+(G,S)$ are $|S|$-regular.
We will write 
	$$ X^*(G,S) $$ 
to refer to $X(G,S)$ and $X^+(G,S)$ simultaneously.

We will say that a graph is \textit{looped} if it has a single loop at every vertex and that it is \textit{looped at $T$} if it has single loops at the vertices in $T \subset G$. 
If $e \in S$ then $X(G,S)$ has a loop at each vertex $x$, since $xx^{-1}=e \in S$, while if $e \notin S$ then $X(G,S)$ has no loops at all.
However, $X^+(G,S)$ may contain loops either if $e \in S$ or not. There is a loop at $x$ in $X^+(G,S)$ if and only if
$x^2 \in S$ ($2x \in S$ if $G$ is abelian). 
For instance, there is a loop at $x$ if $x^2=e$ and $e\in S$ ($2x=0$ and $0\in S$). If $S$ is a subgroup, $X^+(G,S)$ is looped (at least) at $S$.

Notice that $X(G,S)$ is undirected if and only if $S$ is \textit{symmetric}, that is $S$ is closed under inversion 
\begin{equation} \label{eq: S=S^-1}
	S=S^{-1}
\end{equation} 
(this holds in particular if $S$ is a subgroup). 
Similarly, $X(G,S)$ is directed if and only if $S$ is \textit{antisymmetric}, that is 
\begin{equation} \label{eq: antisymmetric}	
	S \cap S^{-1} = \varnothing. 
\end{equation}
On the other hand, $X^+(G,S)$ is undirected if and only if for every $gh\in S$ we have that $hg \in S$. 
This happens if and only if 	
\begin{equation} \label{eq: normal}	
	gSg^{-1} =S 
\end{equation}
for every $g\in G$, i.e.\@ $S$ is closed under conjugation (hence, union of conjugacy classes). Such $S$ is also called a \textit{normal} subset. 
That is $X^+(G,S)$ is undirected if and only if $S$ is normal (this holds in particular if $S$ is a normal subgroup). 
Thus, if $S \lhd G$ is a normal subgroup, then both $X(G,S)$ and $X^+(G,S)$ are undirected.
Furthermore, $X^+(G,S)$ is directed if and only if $S$ is what we call \textit{antinormal}, i.e.
\begin{equation} \label{eq: antinormal}	
	S \cap N_G(S) =\varnothing,
\end{equation}
where 
$ N_G(S) = \{g\in G : gSg^{-1} =S \} $ 
is the normalizer of $S$ in $G$.
Of course, if $G$ is abelian then any $S$ is closed under conjugation.
A Cayley graph $X(G,S)$ is called \textit{abelian} if $G$ is abelian, \textit{symmetric} if $S$ is closed by inversion and \textit{normal} if $S$ is closed under conjugation, hence abelian implies normal. 
We will adopt the same definitions for Cayley sum graphs.

\subsection*{On bi-Cayley and di-Cayley (sum) graphs}
More generally, given a group $G $ and $S_\ell$, $S_r$ and $S_m$ subsets of $G$, the \textit{bi-Cayley graph} 				
	$$ BX(G;S_\ell,S_r,S_m) $$ 
is the graph whose vertex set is $G \times \{0,1\}$ and where the vertices $(u,i)$ and $(v,i)$ with $i \in \{0,1\}$ form an arc from $(u,i)$ to $(v,i)$ if and only if $i=0$ and
$vu^{-1}\in S_{\ell}$ or else if
$i=1$ and $vu^{-1} \in S_r$; while the vertices $(u,0)$ and $(v,1)$ form an undirected edge if and only if $vu^{-1}\in S_m$.
As usual, if there is an arc from $(u,i)$ to $(v,i)$ and an arc from $(v,i)$ to $(u,i)$, for $i\in \{0,1\}$, we consider the arcs $((u,i),(v,i))$ and $((v,i),(u,i))$ as a single undirected edge $\{(u,i),(v,i)\}$.
Similarly, the \textit{bi-Cayley sum graph} 
	$BX^+(G;S_{\ell},S_r,S_m)$ 
is defined as above, changing $vu^{-1}$ by $vu$ in every appearance.

\subsubsection*{Some history} 
The study of graphs admitting a semiregular automorphism group with exactly two orbits naturally extends the rich theory of vertex-transitive Cayley graphs. Originally termed \textit{semi-Cayley graphs}, these structures were pioneered by Marušič \cite{Marusic, Marusic2} in the late 1980s, who focused on vertex-transitive graphs of prime power order and strongly regular bicirculants. Shortly after, de Resmini and Jungnickel \cite{RJ} formalized the algebraic framework of semi-Cayley graphs, demonstrating their utility in constructing strongly regular graphs and drawing deep connections with finite geometries. Today, these structures are widely known as \textit{bi-Cayley graphs} and serve as a fundamental tool for generating highly symmetric networks that relax the strict vertex-transitivity requirement of traditional Cayley graphs.
	
In subsequent years, the literature expanded significantly to address the full characterization of their automorphism groups \cite{AT, ZF}, the isomorphism problem, and their spectral properties. Gao and Luo \cite{GL1} formally posed the isomorphism problem for bi-Cayley graphs, which was subsequently resolved for abelian groups by Liu and Feng \cite{LF} and Kovács and Kuzman \cite{KK}, and for cyclic groups by Kovács, Muzychuk, and Somlai \cite{KMS} using the theory of Schur rings. From a spectral perspective, the eigenvalues of semi-Cayley graphs over abelian groups were explicitly computed in \cite{GL}, revealing how the block structure of their adjacency matrices governs the overall spectrum. While these works focus exclusively on undirected connections between the two Cayley covers, in this paper we relax this condition to allow directed edges.

\subsubsection*{Di-Cayley (sum) graphs} 
Now, we introduce a generalization of these graphs allowing directed graphs between vertices of the form $(u,0)$ and $(v,1)$.
The \textit{di-Cayley graph} 
	$$ DX(G;S_\ell,S_r,S_m) $$ 
is the graph whose vertex set is $G \times \{0,1\}$ and 
where $(u,i)$ and $(v,j)$ with $i,j \in \{0,1\}$ form a directed edge from $(u,i)$ to $(v,j)$ if and only if any of the following four cases holds: ($a$) $i=j=0$ and $vu^{-1}\in S_{\ell}$, ($b$) $i=j=1$ and $vu^{-1} \in S_r$, ($c$) $i=0$, $j=1$ and $vu^{-1}\in S_m$, or ($d$) $i=1$, $j=0$ and $vu^{-1}\in S_m$. We adopt the same convention with double arcs as for bi-Cayley graphs. Namely, one arc and its reverse arc, if exist, are considered a single undirected edge between the vertices.
Similarly, the \textit{di-Cayley sum graph} 
	$DX^+(G;S_{\ell},S_r,S_m)$ 
is defined as above, by changing each $vu^{-1}$ by $vu$.

We will say that $S_\ell$ and $S_r$ are the \textit{connection sets} and that $S_m$ is the \textit{bi-connection} or \textit{di-connection} set in the bi-Cayley or the di-Cayley graph, respectively.
We write 
	$$ BX^*(G;S_\ell,S_r,S_m) \qquad \text{and} \qquad DX^*(G;S_\ell,S_r,S_m) $$ 
to simultaneously refer to $BX(G;S_\ell,S_r,S_m)$ or $BX^+(G;S_\ell,S_r,S_m)$ 
and, 
to $DX(G;S_\ell,S_r,S_m)$ or $DX^+(G;S_\ell,S_r,S_m)$, respectively. Sometimes it will be useful to simply write 
	$$X^*(G;S_\ell,S_r,S_m)$$ 
to refer to both $BX^*(G;S_\ell,S_r,S_m)$ or $DX^*(G;S_\ell,S_r,S_m)$.

If $S_\ell=S_r$ we say that $BX^*(G;S_\ell,S_r,S_m)$ is a \textit{mirror} bi-Cayley (sum) graph and $DX^*(G;S_\ell,S_r,S_m)$ is a \textit{mirror} di-Cayley (sum) graph.
When $S_m$ is a symmetric subset, the di-Cayley graph $DX^*(G;S_\ell,S_r,S_m)$ coincides with the bi-Cayley graph $BX^*(G;S_\ell,S_r,S_m)$, but in general they are different. Hence we will try to study both kind of graphs in general, focusing on the similarities and the differences. A particular family of mirror di-Cayley (sum) graphs was recently studied by the authors in \cite{ChP}.

\subsection*{Graph spectral definitions}
Let $\G=(V,E)$ be a graph with $n$ vertices, where $V$ denotes the vertex set and $E$ the edge set. The eigenvalues of $\Gamma$ are the eigenvalues $\{\lambda_i\}_{i=1}^{n}$ of its adjacency matrix $A$. The \textit{spectrum} of $\Gamma$, denoted
	$$ Spec(\Gamma)=\{[\lambda_{i_1}]^{m_{i_1}},\cdots,[\lambda_{i_s}]^{m_{i_s}}\} $$
is the (multi)set of all the different eigenvalues $\{\lambda_{i_j}\}$ of $\Gamma$, counted with their multiplicities $\{m_{i_j}\}$, that is $m(\lambda_{i_j})=m_{i_j}$. The spectrum of $\Gamma$ is called \textit{symmetric} if $Spec(\G)=-Spec(\G)$, that is the multiplicities of $\lambda$ and $-\lambda$ coincide for any $\lambda$, in symbols 
	$$m(\lambda) = m(-\lambda)$$ 
for every $\lambda\in Spec(\Gamma)$. The spectrum of $\G$ is said to be \textit{real} or \textit{integral} if $Spec(\Gamma)\subset\mathbb{R}$ or $Spec(\Gamma)\subset\mathbb{Z}$, respectively. 
Two graphs with the same number of vertices $\Gamma_1$ and $\Gamma_2$ are said to be \textit{isospectral} if 
$$ Spec(\Gamma_1) = Spec(\Gamma_2).$$ 

We recall that if $\Gamma$ is a $k$-regular graph, then $\lambda_1 = k$. Moreover, the multiplicity of $\lambda_1$ equals the number of connected components of $\G$ and hence $\Gamma$ is connected if and only if $m(\lambda_1) = 1$. Spectrally, a $k$-regular graph $G$ is bipartite if and only if the spectrum of $G$ is symmetric, which happens if and only if $-k$ is an eigenvalue of $\Gamma$.

\subsection*{Outline and results}
Briefly, in Section \ref{sec: diCayley graphs} we introduce di-Cayley graphs and study basic properties, in Section \ref{sec: adj mat} we study their adjacency matrices, in Section~\ref{sec: spec} we study the spectrum of Cayley (sum) graphs, in Section \ref{sec: spectrum of dicayleys} we study the spectrum of di-Cayley (sum) graphs and in Section \ref{sec: explicit computations} we do some explicit computations.

\sk 

More precisely, in Section \ref{sec: diCayley graphs} we introduce the family of di-Cayley (sum) graphs as a directed generalization of bi-Cayley graphs (see Definitions \ref{defi: biCayleys} and \ref{defi: biCayleys+}). Then we study basic structural properties of the graphs $\G^*=DX^*(G;S_\ell,S_r,S_m)$. In Proposition \ref{prop: DX undirected} we give necessary and sufficient conditions for $\G$ and $\G^+$ to be directed or undirected in terms of (anti)symmetry and (anti)normality of the (di)connection sets, respectively.
In Proposition \ref{prop: loops} we give necessary and sufficient conditions for the graphs $\G^*=DX^*(G;S_\ell,S_r,S_m)$ to have loops (globally or in a cover). Finally, in Proposition \ref{prop: biCay bireg} we show that these graphs are biregular (two different valencies). 

\sk 

The next section is devoted to the adjacency matrices of di-Cayley (sum) graphs.
In Proposition \ref{prop: AdjMat} we give the adjacency matrices $A_{BX^*}$ and $A_{DX^*}$ of $BX^*(G;S_\ell,S_r,S_m)$ and $DX^*(G;S_\ell,S_r,S_m)$, respectively, in terms of the adjacency matrices of the associated Cayley graphs $X(G;S_\ell)$, $X(G;S_r)$, $X(G;S_m)$ and to the Cayley sum graphs $X^+(G;S_\ell)$, $X^+(G;S_r)$, $X^+(G;S_m)$, respectively.
In Theorem~\ref{thm: ASAT=ATAS} we show that if $G$ is a finite group and $S,T$ are normal subgroups, the adjacency matrices of $X^*(G,S)$ and $X^*(G,T)$ commute with each other. This will be crucial to obtain the spectral results in Section \ref{sec: spectrum of dicayleys}.
The result for Cayley (sum) graphs generalizes to mirror bi/di-Cayley (sum) graphs having normal and symmetric connection sets (see Corollary \ref{coro: mirror Adj mats}).
Finally, in Corollary \ref{coro: commuting mirror adj mat} we show that if $\G=X^*(G;S,S,T)$ and $\G'=X^*(G;S',S',T')$ are two normal mirror bi/di-Cayley (sum) graphs with $T,T'$ symmetric, then their adjacency matrices $A_\G$ and $A_{\G'}$ commute, that is 
$$	A_\G A_{\G'} = A_{\G'} A_{\G}. $$

In Section \ref{sec: spec} we recall the spectrum of Cayley (sum) graphs $X^*(G,S)$ in terms of irreducible characters of the group $G$ (see Propositions \ref{prop: spec XGS G abelian}--\ref{prop: spec XGS+ G abelian} for $G$ abelian and Propositions \ref{prop: spec XGS}--\ref{prop: spec XGS+} for $G$ arbitrary). 
Then, using this, in Section \ref{sec: spectrum of dicayleys} we study the spectrum of di-Cayley (sum) graphs which is our main interest. 
In Theorems \ref{thm: Spec DX} and \ref{thm: Spec DX+}, which are the main results in the article, we give the eigenvalues of bi/di-Cayley (sum) graphs. More precisely, if $G$ is a finite group and $S_\ell$, $S_r$, $S_m$ are symmetric and closed under conjugation subsets of $G$, then the eigenvalues bi/di-Cayley (sum) graph $BX^+(G;S_\ell,S_r,S_m)$ and $DX^+(G;S_\ell,S_r,S_m)$ are 
respectively given by 
\begin{align*} 
	\lambda_{BX^*}^{\pm} (\chi) = \tfrac{ \lambda_{S_\ell,\chi}^* + \lambda_{S_r,\chi}^*}{2} \pm 
	\sqrt{ \Big(\tfrac{\lambda_{S_\ell,\chi}^* - \lambda_{S_r,\chi}^*}{2} \Big)^2 + |\lambda_{S_m,\chi}^*|^2 }, \\[2mm]
	\lambda_{DX^*}^{\pm} (\chi) = \tfrac{\lambda_{S_\ell,\chi}^* + \lambda_{S_r,\chi}^*}{2} \pm 
	\sqrt{ \Big( \tfrac{\lambda_{S_\ell, \chi}^* - \lambda_{S_r,\chi}^*}{2} \Big)^2 + (\lambda_{S_m,\chi}^*)^2} 
\end{align*}	
where $\lambda_{S_\ell,\chi}^*$, $\lambda_{S_r,\chi}^*$ and $\lambda_{S_m,\chi}^*$ are the eigenvalues of $X^*(G,S_\ell)$, $X^*(G,S_r)$ and $X^*(G,S_m)$, and $\chi \in \hat G$ runs over the set of all irreducible characters of $G$.
In Corollary \ref{coro: spec with chi(G)} we give these four spectra in terms of irreducible characters of $G$.

The above expressions for the eigenvalues simplify dramatically in the case of mirror di-Cayley graphs. Namely, in Corollary \ref{coro: spec mirror} we show that   
if $G$ is a finite group and $S, T \subset G$ are closed under conjugation, then the eigenvalues of 
the mirror bi/di-Cayley (sum) graphs 
$BX^*(G;S,S,T)$ and $DX^*(G;S,S,T)$ are given by
\begin{equation*} 
	\lambda_{BX^*}^{\pm} (\chi) = \lambda_{S,\chi}^* \pm  |\lambda_{T,\chi}^*|,
	\qquad \text{and} \qquad
	\lambda_{DX^*}^{\pm} (\chi) = \lambda_{S,\chi}^* \pm  \lambda_{T,\chi}^*,
\end{equation*}
respectively, where $\lambda_{S,\chi}^*$ and $\lambda_{T,\chi}^*$ are the eigenvalues of $X^*(G,S)$ and $X^*(G,T)$ corresponding to $\chi$, with $\chi \in \hat G$. Finally, in Proposition  \ref{prop: integral graph} we give conditions for integrality of bi/di-Cayley (sum) graphs $X^*(G;S,S,T)$ in terms of the integrality of the covers, that is of the Cayley (sum) graphs $X^*(G,S)$ and $X^*(G,T)$.  

Finally, in Section \ref{sec: explicit computations} we do some explicit computations. In Examples \ref{exam: 6.1} to \ref{exam: 6.6} we consider bi/di-Cayley (sum) graphs over small groups, we give their graphic representations and compute their spectra in detail using the results in the previous sections. We consider mirror and non-mirror bi/di-Cayley (sum) graphs, with abelian and non-abelian groups, and having (Gaussian) integral or non (Gaussian) integral spectrum.

\section{Di-Cayley (sum) graphs} \label{sec: diCayley graphs}
Here we introduce the family of di-Cayley (sum) graphs as a directed generalization of the bi-Cayley (sum) graphs. 
We give some basic structural properties for these graphs.

\subsection*{Basic definitions}
We now define di-Cayley (sum) graphs in a slightly more general way than the standard definition of bi-Cayley (sum) graphs, allowing directed edges not only between vertices of the form $(u,i)$ and $(v,i)$ with $i\in \{0,1\}$, but also between 
vertices of the form $(u,i)$ and $(v,j)$ with $i,j \in \{0,1\}$ and $i\ne j$.
\begin{defi} \label{defi: biCayleys}
Let $G$ be a group 
and let $S_\ell$, $S_r$, $S_m$ be subsets of $G$. 
The \textit{di-Cayley graph} over $G$, denoted 
	$$ DX(G; S_\ell,S_r,S_m),$$ 
is the graph having the vertex set $V=G\times\{0,1\}$ and where the vertices $(h,i)$ and $(g,j)$ form a directed edge if and only if one of the following four possibilities occurs: 
\goodbreak
\begin{enumerate}
	\item[($a$)] \: $i=j=0$ \quad and \quad $gh^{-1}\in S_\ell$; \sk  
	\item[($b$)] \: $i=j=1$ \quad and \quad $gh^{-1}\in S_r$; \sk 
	\item[($c$)] \: $i=0$, $j=1$ \quad and \quad $gh^{-1}\in  S_m$; \sk 
	\item[($d$)] \: $i=1$, $j=0$ \quad and \quad $gh^{-1}\in  S_m$. 
\end{enumerate}
\nopagebreak
In cases ($a$) and ($b$) the edge is directed from $(h,i)$ to $(g,i)$ with $i=0,1$, in case ($c$) from $(h,0)$ to $(g,1)$ and in case ($d$) from $(h,1)$ to $(g,0)$.
If $S_\ell=S_m=S$ we will say that $DX(G; S, S, S_m)$ is a \textit{mirror di-Cayley graph}. 
\end{defi}

Analogously, we can define di-Cayley sum graphs, just by changing $gh^{-1}$ by $gh$ in the above definition of di-Cayley graph. In the case that $G$ is abelian, $g-h$ changes by $g+h$, from where the name comes. More precisely, we have the following.

\begin{defi} \label{defi: biCayleys+}
Let $G$ be a group and let $S_\ell$, $S_r$, $S_m$ be subsets of $G$. 
The \textit{di-Cayley sum graph} over $G$, denoted 
	$$ DX^{+}(G;S_\ell,S_r,S_m)$$ 
has the vertex group $G\times\{0,1\}$ where $(h,i)$ and $(g,j)$ form a directed edge if and only if one of the following four possibilities occurs:
\begin{enumerate}[($a$)]
	\item \: $i=j=0$ and $gh\in S_\ell$; \sk 
	\item \: $i=j=1$ and $gh\in S_r$; \sk 
	\item \: $i=0$, $j=1$ and $gh\in  S_m$. \sk 
	\item \: $i=1$, $j=0$ and $gh\in  S_m$.
\end{enumerate}
In cases ($a$) and ($b$) the edge is directed from $(h,i)$ to $(g,i)$ with $i=0,1$, in case ($c$) from $(h,0)$ to $(g,1)$ and in case ($d$) from $(h,1)$ to $(g,0)$.
If $S_\ell=S_r=S$ we will say that $DX^+(G; S, S, S_m)$ is a \textit{mirror di-Cayley sum graph}. 
\end{defi}

By Definitions \ref{defi: biCayleys} and \ref{defi: biCayleys+},
we will write $DX^*(G; S_\ell, S_r, S_m)$ to refer to the graphs $DX(G; S_\ell, S_r, S_m)$ and $DX^+(G; S_\ell, S_r, S_m)$ simultaneously.
For mirror di-Cayley (sum) graphs we will write $S$ instead of $S_\ell=S_r$ and $T$ in place of $S_m$. Hence, in these cases we will use the notation $DX^*(G;S,S,T)$. 

To study bi and di-Cayley graphs in an unified manner, we will sometimes use the notation 
\begin{equation}  \label{eq: X* not}
	X^*(G;S_\ell,S_r,S_m)	
\end{equation}
to denote both the bi-Cayley (sum) graphs $BX^*(G;S_\ell,S_r,S_m)$ or the di-Cayley (sum) graphs $DX^*(G;S_\ell,S_r,S_m)$, and we refer to them as \textit{bi/di-Cayley (sum) graphs}. 
Hence, we will also write 
	$ X^*(G;S,S,T) $ 
when referring to the mirror bi/di-Cayley (sum) graphs together.

We now make some comments about the previous definitions.

\begin{rem} \label{rem: cosas1}
($i$) The original definition of bi-Cayley graphs consider items ($a$), $(b)$ and ($c$) as in Definition \ref{defi: biCayleys}, excluding ($d$), but the edges given by condition ($c$) are undirected by definition.
That is, it allows the graphs $X(G,S_\ell)$ and $X(G,S_r)$ to be directed or not, depending whether the connections sets $S_\ell$ and $S_r$ are symmetric or not, but the edges between these two graphs are undirected. We prefer to consider this more general situation, allowing directed edges between these Cayley graphs. In both cases, the more general bi/di-Cayley graph is of mixed type (there can be directed and undirected edges).

\noindent ($ii$) 
Similarly as in ($i$), if in Definition \ref{defi: biCayleys+} we consider items ($a$), $(b)$ and ($c$), excluding ($d$), and the edges given by condition ($c$) are undirected by definition, we get the \textit{bi-Cayley sum graphs}. 

\noindent ($iii$) It is clear by the definitions that, when $S_m$ is symmetric, the di-Cayley graph $DX(G; S_\ell, S_r, S_m)$ is a bi-Cayley graph and coincides with $BX(G; S_\ell, S_r, S_m)$. Similarly, when $S_m$ is normal, the di-Cayley sum graph $DX^+(G; S_\ell, S_r, S_m)$ coincides with $BX^+(G; S_\ell, S_r, S_m)$.

\noindent ($iv$) 
If $S_m$ consists only of the identity element $e$ of $G$, we will say that $DX(G; S_\ell, S_r, \{e\})$ is a \textit{one-matching di-Cayley graph} over $G$, 
following the definition in the bi-Cayley case.
\end{rem}

Notice that conditions ($a$) and ($b$) in Definitions \ref{defi: biCayleys} and \ref{defi: biCayleys+} give the Cayley (sum) graphs $X^*(G,S_\ell)$ and $X^*(G,S_r)$ respectively, and that conditions ($c$) and ($d$) glue these graphs together depending on the set $S_m$. Thus, we can think of the bi/di-Cayley graph 
$X^*(G; S_\ell,S_r,S_m)$ as the union of the two Cayley (sum) graphs $X^*(G, S_\ell)$ and $X^*(G, S_r)$ under the action of $S_m$. 
Hence, we will refer to the graphs $X^*(G, S_\ell)$ and $X^*(G, S_r)$ as the \textit{left and right covers} of $X^*=X^*(G;S_\ell,S_r,S_m)$, respectively. Also, we will say that $S_\ell$ and $S_r$ are the (left and right) connections sets and $S_m$ is the \textit{di-connection} set (or a \textit{bi-connection} set if $X^*$ is bi-Cayley).
Sometimes it will be useful to refer to the edges between different covers, that is those given by ($c$) and ($d$), as the \textit{di-edges} (or \textit{bi-edges} if $X^*$ is bi-Cayley).

\begin{rem}
We have defined the family of mirror di-Cayley (sum) graphs (MDCGs for short) as those di-Cayley (sum) graphs with $S_\ell=S_r$. This is a fancy family since it can be proved that mirror di-Cayley (sum) graphs are in fact Cayley graphs (this is not true in general). 
Indeed, in Proposition 2.3 of \cite{ChP} we have proved that 
	$$ DX^*(G;S,S,T) = X^*(G\times \Z_2, S\times \{0\} \cup T\times \{1\}). $$ 
This relation does not hold for bi-Cayley (sum) graphs in general, and it is one our main motivations for studying these graphs.
\end{rem}

\subsection*{Arcs, loops and regularity}
Here we study the most basic properties of a graph, such as whether if it is directed, undirected or mixed, whether if it has loops or not, and regularity.

\subsubsection*{Directedness}
We begin by making clear when the di-Cayley graphs, which are generically mixed, are directed or undirected.	

\begin{prop} \label{prop: DX undirected}
Consider the di-Cayley graph $\G = DX(G;S_\ell,S_r,S_m)$ and the di-Cayley sum graph 
$\G^+ = DX^+(G;S_\ell,S_r,S_m)$. 
\begin{enumerate}[$(a)$]
	\item $\G$ is undirected if and only if the (di)connection sets are symmetric. \sk 
	
	\item $\G^+$ is undirected if and only if the (di)connection sets are normal. \sk 
	
	\item $\G$ is directed if and only if the (di)connection sets are antisymmetric. \sk 

	\item $\G^+$ is undirected if and only if the (di)connection sets are antinormal. 
\end{enumerate}
\end{prop}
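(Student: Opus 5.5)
The plan is to decompose the arc set of $\G$ (resp.\ $\G^+$) into the four families of arcs given by conditions ($a$)--($d$) of Definitions \ref{defi: biCayleys} and \ref{defi: biCayleys+}, and to test undirectedness or directedness family by family. Arcs of types ($a$) and ($b$) stay inside the left and right covers $X^*(G,S_\ell)$ and $X^*(G,S_r)$. Their reverses are again of the same type. So the question of whether every such arc has its reverse, or whether none does, reduces exactly to the corresponding question for the Cayley (sum) graphs $X^*(G,S_\ell)$ and $X^*(G,S_r)$. This is handled by the facts recalled in the Introduction: $X(G,S)$ is undirected iff $S=S^{-1}$ and directed iff $S\cap S^{-1}=\varnothing$, and $X^+(G,S)$ is undirected iff $S$ is normal and directed iff $S$ is antinormal.

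The genuinely new part is the di-edges. I would observe that the reverse of an arc of type ($c$), from $(h,0)$ to $(g,1)$, is an arc from $(g,1)$ to $(h,0)$, which is of type ($d$). It exists iff $hg^{-1}\in S_m$ in the Cayley case, and iff $hg\in S_m$ in the sum case. Symmetrically, the reverse of a ($d$)-arc is a ($c$)-arc. Since both types use the same set $S_m$, the di-edges behave exactly like the arcs of the single Cayley (sum) graph $X^*(G,S_m)$ transported between the two covers. Hence all di-edges are two-way iff $S_m=S_m^{-1}$ (resp.\ $S_m$ normal, using $gh\in S_m \Rightarrow hg=h(gh)h^{-1}\in S_m$ and conversely). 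Likewise, no di-edge is two-way iff $S_m\cap S_m^{-1}=\varnothing$ (resp.\ $S_m$ antinormal).

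Combining the two parts gives each item, since $\G$ is undirected (resp.\ directed) iff every family is undirected (resp.\ directed). For the converse directions I would produce, from a single failing element $s$ of some set, an explicit witness. For example, if $s\in S_\ell$ but $s^{-1}\notin S_\ell$, then the arc $(e,0)\to(s,0)$ has no reverse. If $s\in S_m\cap S_m^{-1}$, then $(e,0)\to(s,1)\to(e,0)$ is a double arc. The main obstacle is the sum case for items ($b$) and ($d$). There I must check carefully that the notion ``antinormal'' as written in \eqref{eq: antinormal} really matches ``no pair $g,h$ with $gh,hg\in S$''. If it does not match literally, I would restate the condition as the element-wise one, $gh\in S\Rightarrow hg\notin S$, and prove the equivalence with \eqref{eq: antinormal} in the way the paper intends. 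I would also note that item ($d$) should read ``directed'' rather than ``undirected''.
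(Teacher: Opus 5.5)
Your treatment of items $(a)$--$(c)$ is correct and follows the paper's own argument. You split the arcs into those inside the covers and the di-edges, observe that the reverse of a $(c)$-arc is a $(d)$-arc governed by the same set $S_m$, and apply the Cayley-graph criteria. Your explicit witnesses for the converse directions are a welcome addition.

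The genuine problem is item $(d)$, at exactly the point you flagged and then deferred. You plan to restate antinormality elementwise as ``$gh\in S\Rightarrow hg\notin S$'' and prove it equivalent to \eqref{eq: antinormal}. This cannot be carried out, because the elementwise condition fails for every nonempty $S$: take $h=e$.

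Concretely, in a sum graph the identity commutes with everything. For $s\in S_\ell$, the vertices $(e,0)$ and $(s,0)$ are joined in both directions, since $se=es=s\in S_\ell$ (a loop if $s=e$); the same holds in the right cover. For $s\in S_m$, both $(e,0)\to(s,1)$ and $(s,1)\to(e,0)$ are arcs. So your intermediate claim that no di-edge is two-way iff $S_m$ is antinormal is false, as is the recalled fact that $X^+(G,S)$ is directed iff $S$ is antinormal. In fact $\G^+$ is directed if and only if $S_\ell=S_r=S_m=\varnothing$.

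This is not equivalent to $S\cap N_G(S)=\varnothing$ when $G$ is nonabelian. In $\mathbb{S}_3$, take $S_\ell=S_r=S_m=S=\{(12),(13)\}$. Conjugation by $g$ sends $S$ to $\{(g(1)\,g(2)),(g(1)\,g(3))\}$, so $g$ must fix the common point $1$. Hence $N_G(S)=\{e,(23)\}$ and $S$ is antinormal, yet $(e,0)$ and $((12),0)$ are joined both ways.

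Thus item $(d)$ is false even after replacing ``undirected'' by ``directed''. For abelian $G$ it holds only vacuously, since there $N_G(S)=G$ and antinormal means empty. The paper's proof of $(d)$ has the same defect: it simply cites \eqref{eq: antinormal}, the unproved claim in the Introduction, which is where the error originates. The right outcome of your analysis is a corrected statement of $(d)$ ($\G^+$ directed iff all three sets are empty), not a proof of the one given.
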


\begin{proof}
($a$) $\G$ is undirected if and only if $X(G,S_\ell)$ and $X(G,S_r)$ are undirected and the edges between the covers $X(G,S_\ell) \times \{0\}$ and 
$X(G,S_r) \times \{1\}$ are undirected, and this happens if and only if $S_\ell$, $S_r$ and $T$ are symmetric subsets by \eqref{eq: S=S^-1}.
	
\noindent 
($b$) $\G^+$ is undirected if and only if $X^+(G,S_\ell)$ and $X^+(G,S_r)$ are undirected and the edges between the covers $X^+(G,S_\ell) \times \{0\}$ and 
$X^+(G,S_r) \times \{1\}$ are undirected, and this happens if and only if $S_\ell$, $S_r$ and $T$ are normal subsets by \eqref{eq: normal}.
	
\noindent 
($c$) $\G$ is directed if and only if $X(G,S_\ell)$ and $X(G,S_r)$ are directed and the edges between the covers $X(G,S_ \ell) \times \{0\}$ and 
$X(G,S_r) \times \{1\}$ are directed, and this happens if and only if $S_\ell$, $S_r$ and $T$ are antisymmetric subsets by \eqref{eq: antisymmetric}.
	
\noindent 
($d$) $\G^+$ is directed if and only if $X^+(G,S_\ell)$ and $X^+(G,S_r)$ are directed and the edges between the covers $X^+(G,S_\ell) \times \{0\}$ and 
$X^+(G,S_r) \times \{1\}$ are directed, and this happens if and only if $S_\ell$, $S_r$ and $T$ are antinormal subsets by \eqref{eq: antinormal}.
\end{proof}

Notice that if $\G^* = BX^*(G;S_\ell,S_r,S_m)$ is bi-Cayley then $\G^*$ is never directed by definition and $\G$ (resp.\@ $\G^+$) is undirected if and only if $S_\ell$ and $S_r$ are symmetric (resp.\@ normal).

In the general case, we have the following cases of interest (among others) of mixed di-Cayley graphs:

\noindent		
($a$) $\G$ (resp.\@ $\G^+$) is mixed with undirected covers joined by arcs if and only if $S_\ell$, $S_r$ are symmetric (resp.\@ normal) and $S_m$ is non-symmetric (resp.\@ non-normal). 	

\noindent		
($b$) $\G$ (resp.\@ $\G^+$) is mixed with directed covers joined by undirected edges if and only if $S_\ell$, $S_r$ are non-symmetric (resp.\@ non-normal) and $S_m$ is symmetric (resp.\@ normal).

\subsubsection*{Loops}
We now study loops. 
\begin{itemize}
	\item The graph $X(G,S)$ is looped if $e \in S$ or loopless if $e \notin S$. \sk 
	
	\item The graph $X^+(G,S)$ has a loop at vertex $x$ if and only if $x^2 \in S$ ($2x \in S$ if $G$ is abelian). 
	If $S$ is subgroup, then $X^+(G,S)$ is looped at least at $S$. 
\end{itemize}

We now study loops in bi/di-Cayley (sum) graphs. 

\goodbreak 

\begin{prop} \label{prop: loops}
Let $G$ be a group and $S_\ell, S_r, S_m \subset G$. Let $\G = X(G;S_\ell, S_r, S_m)$, $\G^+ = X^+(G;S_\ell, S_r, S_m)$ and $S \in \{S_\ell, S_r\}$. We have the following:

\noindent $(a)$ 
$\G$ is looped (resp.\@ loopless) if and only if $e \in S_\ell \cap S_r$ (resp.\@ $e \notin S_\ell \cap S_r$). 
In particular, this happens if $S_\ell$ and $S_r$ are subgroups of $G$. 

\noindent $(b)$ 
$\G$ is looped only at the cover $X(G,S_\ell) \times \{0\}$ if and only if $e\in S_\ell$ and $e \notin S_r$ 
while $\G$ is looped only at the cover $X(G,S_r) \times \{1\}$ if and only if $e\in S_r$ and $e \notin S_\ell$.

\noindent $(c)$ 
$\G^+$ has loops at $(x,0)$ if and only if $x^2\in S_\ell$ and has loops at $(x,1)$ if and only if $x^2\in S_r$. This happens for instance for the elements $x$ of order 2 in $S$ if $e\in S$ and it is automatic for any element of $S$ if it is a subgroup. 
\end{prop}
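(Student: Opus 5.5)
The plan is to work vertex by vertex straight from Definitions \ref{defi: biCayleys} and \ref{defi: biCayleys+}, since everything reduces to deciding when a vertex is joined to itself. A loop at a vertex $(x,i)$ is an arc from $(x,i)$ to $(x,i)$. Conditions ($c$) and ($d$) only produce arcs between $(h,0)$ and $(g,1)$, that is, between different covers, so they never create loops. Hence loops come only from ($a$) when $i=0$ and from ($b$) when $i=1$. This also shows that the loops of $\G$ and $\G^+$ are exactly the loops of the two covers $X^*(G,S_\ell)\times\{0\}$ and $X^*(G,S_r)\times\{1\}$, and the same holds for the bi-Cayley versions.

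For ($a$) and ($b$), take the Cayley case $h=g=x$. Then $gh^{-1}=e$, so $(x,0)$ carries a loop if and only if $e\in S_\ell$, and $(x,1)$ carries one if and only if $e\in S_r$; neither condition depends on $x$. So each cover is either fully looped or loopless, as recalled in the bullet list before the proposition.
\begin{itemize}
\item $\G$ is looped at every vertex exactly when both hold, i.e.\ $e\in S_\ell\cap S_r$.
\item $\G$ is looped only on the left cover exactly when $e\in S_\ell$ and $e\notin S_r$, and only on the right cover in the symmetric case. This gives ($b$).
\item For ``loopless'' in ($a$), I would read the statement as meaning ``not looped at every vertex''. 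Literal looplessness would require $e\notin S_\ell\cup S_r$, and I would point this out.
\item If $S_\ell,S_r$ are subgroups, they contain $e$, which gives the ``in particular'' clause.
\end{itemize}

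For ($c$), take $h=g=x$ in the sum definition, so $gh=x^2$. Then $(x,0)$ is looped if and only if $x^2\in S_\ell$, and $(x,1)$ if and only if $x^2\in S_r$.
\begin{itemize}
\item If $x$ has order $2$ and $e\in S$, then $x^2=e\in S$. (The hypothesis ``$x\in S$'' is not actually needed here.)
\item If $S$ is a subgroup and $x\in S$, then $x^2\in S$ by closure.
\end{itemize}

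There is no real obstacle here. The only delicate point is the precise meaning of ``loopless'' in ($a$), which I would address with the remark above.
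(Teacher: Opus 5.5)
Your proposal is correct and follows the same route as the paper, whose proof just says the result is clear from the definitions. As you argue, loops arise only from conditions ($a$)/($b$), and setting $h=g=x$ gives $gh^{-1}=e$ in the Cayley case and $gh=x^2$ in the sum case. Your caveat about ($a$) is also right: taken literally, $\G$ is loopless if and only if $e\notin S_\ell\cup S_r$, so the ``(resp.\ loopless)'' clause holds only when read as ``not looped at every vertex.''
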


\begin{proof}
This is clear from the definitions and the previous comments. 
\end{proof}

As a consequence, if $\G^*=X^*(G;S_\ell,S_r,S_m)$ and $S_\ell, S_r$ are subgroups of $G$ then $\G$ has loops at every vertex and $\G^+$ has loops at least at every vertex of the form $(s,0)$, $(t,1)$, with $s\in S_\ell$ and $t\in S_r$.

\begin{exam}
Let $\Gamma^+ = X^+(\Z_n;2\Z_n,2\Z_n, T)$ for any $n \in \N$ with $T\subset \Z_n$. 
Since $2\Z_n$ is a subgroup of $\Z_n$, the graph $X^+(\Z_n,2\Z_n)$ is looped. If $n$ is odd we have that $X^+(\Z_n,2\Z_n)=\mathring{K_n}$ (the complete graph of order $n$ with a loop at every vertex), while if $n=2m$ then 
$X^+(\Z_n,2\Z_n)$ is the union of two copies of $\mathring{C_m}$ (the $m$-cycle graph with a loop at every vertex). 
Hence, the graph $\Gamma^+ = X^+(\Z_n;2\Z_n,2\Z_n, T)$ is looped for any $n$ and $T$.
\hfill $\diamond$
\end{exam}

\subsubsection*{Bi-regularity}
If $G$ is finite of order $n$, then $X^*(G, S_\ell,S_r,S_m)$ has order $2n$.
We will use the notations 
	$$|S_\ell| = k_\ell, \qquad |S_r| = k_r \qquad \text{and} \qquad |S_m| = k_m. $$
Note that the graph $X^*(G, S_\ell)$ is $k_\ell$-regular and the graph $X^*(G, S_r)$ is $k_r$-regular (whether they are directed or not). We have that if $k_\ell = k_r$, then $X^*(G; S_\ell, S_r, S_m)$
is $(k_\ell+k_m)$-regular. 

Now, we introduce the concept of biregularity of graphs. 
A graph is biregular if it has vertices of two different degrees. More precisely, we have the following.

\begin{defi} \label{def: biregular}
A graph $G = (V,E)$ is said to be \textit{biregular} with regularity degrees $n$ and $m$, or simply \textit{$(n,m)$-regular}, if there is a bipartition of the set of vertices, i.e.\@ 
	$$V = V_1\cup V_2 \qquad \text{with} \qquad V_1 \cap V_2 = \varnothing,$$ 
such that 
	$$ \delta(v) = n \quad \forall \, v\in V_1 \qquad \text{and} \qquad \delta(w) = m  \quad \forall w\in V_2.$$ 
If $n = m$, then the graph is $n$-regular.
\end{defi} 
A trivial example of biregular graphs is given by the complete bipartite graphs $K_{m,n}$ which are $(n,m)$-regular.

\begin{prop} \label{prop: biCay bireg}
	The graphs $X^*(G; S_\ell, S_r, S_m)$ are $(k_\ell + k_m, k_r + k_m)$-regular.
\end{prop}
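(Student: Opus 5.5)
The plan is to partition the vertex set as $V = V_0 \cup V_1$ with $V_i = G\times\{i\}$, and to compute the degree of each vertex by splitting its neighbours into those in the same cover and those in the other cover. Throughout, degree means out-degree for arcs. By the symmetry of the cases this also gives the in-degree, and in the undirected case the two coincide. $G$ is assumed finite, as in the paragraph preceding the statement.

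First, fix $(h,0)\in V_0$. Its neighbours inside the left cover are the $(g,0)$ with $gh^{-1}\in S_\ell$ (respectively $gh\in S_\ell$). The map $g\mapsto gh^{-1}$ (respectively $g\mapsto gh$) is a bijection of $G$. Hence these neighbours are in bijection with $S_\ell$: they are $g = sh$ (respectively $g = sh^{-1}$) for $s\in S_\ell$. This is just the $k_\ell$-regularity of $X^*(G,S_\ell)$ noted before Definition \ref{def: biregular}. In the same way, the neighbours of $(h,0)$ in $V_1$, given by condition ($c$), are the $(g,1)$ with $gh^{-1}\in S_m$ (respectively $gh\in S_m$). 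By the same bijection there are exactly $k_m$ of them. Since the two sets of neighbours lie in disjoint covers, $\delta((h,0)) = k_\ell + k_m$.

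Second, the identical argument for $(h,1)\in V_1$ uses conditions ($b$) and ($d$), and gives $\delta((h,1)) = k_r + k_m$. For in-degrees one uses the bijection $h\mapsto gh^{-1}$ (respectively $h\mapsto gh$) with $g$ fixed, and obtains the same counts.

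For bi-Cayley graphs, condition ($d$) is absent but the ($c$)-edges are undirected. The $k_m$ edges at $(h,1)$ are then counted through $h\mapsto$ the $(g,0)$ with $hg^{-1}\in S_m$ (respectively $hg\in S_m$), again a bijection with $S_m$. So the same degrees result.

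There is no real obstacle here. The only point needing care is the convention issue: loops and merged double arcs should be counted consistently. I would state that each element of a connection set contributes exactly one incidence, so a loop counts once, matching the $|S|$-regularity convention for Cayley graphs recalled in the Introduction.
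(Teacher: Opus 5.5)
Your proof is correct and is essentially the paper's argument: for a vertex $(h,0)$ (resp.\ $(h,1)$) you count the $k_\ell$ (resp.\ $k_r$) neighbours inside its own cover and the $k_m$ neighbours in the other cover. You also make explicit what the paper leaves implicit: the bijections $s\mapsto sh$ (or $sh^{-1}$ in the sum case) with the connection sets, the use of out- and in-degrees for arcs, the bi-Cayley case where $(c)$-edges are undirected, and the convention that a loop counts once.
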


\begin{proof} 
Each vertex of the form $(g,0)$ has degree $k_\ell + k_m$, the $\ell$ coming from edges on the cover $X^*(G,S_\ell)$ and the $m$ coming from edges between the cover $X^*(G,S_r)$. 
Similarly, vertices of the form $(g,1)$ have degree $k_r + k_m$, taking into account the neighbors on $X^*(G,S_r)$ and on $X^*(G,S_\ell)$, respectively. 
Therefore, the graph $X^*(G; S_\ell, S_r, S_m)$ is biregular with degrees $(k_\ell + k_m, k_r + k_m)$.
\end{proof}

\section{Adjacency matrices} \label{sec: adj mat}
In this section, we give the adjacency matrices of bi/di-Cayley (sum) graphs and study their properties. 
We will need them to obtain our main results in the next section; that is, for the computation of the spectrum of the graphs.

The adjacency matrix of a graph $G$ with $n$ vertices is an $n\times n$ matrix $A=A_G$ where each entry $a_{ij}$
represents the presence or absence of an edge from $i$ to $j$. Specifically, 
is $G$ is undirected, $a_{ij}=1$ if there is an edge from $i$ to $j$, and $0$ otherwise (hence $A$ is symmetric and the eigenvalues are real); while if $G$ is directed, $a_{ij}=1$ if there is a directed edge from $i$ to $j$ and $0$ otherwise.
In other words, \textit{adjacency matrix} $A$ of $G$ is defined by 
\begin{equation} \label{eq: adj matrix}
	A_{ij} = \begin{cases}
		1 & \quad \text{if $ij \in E$}, \\[1mm]
		0 & \quad \text{if $ij \not\in E$}, \end{cases} 
	\qquad \text{or} \qquad 
	A_{ij} = \begin{cases}
		1 & \quad \text{if $\vec{ij} \in E$}, \\[1mm]
		0 & \quad \text{if $\vec{ij} \not\in E$}, \end{cases}	
\end{equation}
depending whether $G$ is undirected or not.
We have that $G$ is undirected if and only if $A$ is symmetric ($A=A^t$) and thus with real spectrum.

A Cayley graph $X(G,S)$ is undirected if and only if $S$ is symmetric while a Cayley sum graph $X^+(G,S)$ is undirected if and only if $S$ is normal. 

The following basic result, relating the transpose of the adjacency matrix of $X(G,S)$ with the adjacency matrix of $X(G,S^{-1})$, is automatic from the definitions.

\begin{lem} \label{lem: A y At}
If $A_S$ is the adjacency matrix of $X(G,S)$, then $A_S^t$ is the adjacency matrix of $X(G,S^{-1})$, that is $A_S^t=A_{S^{-1}}$. 
\end{lem}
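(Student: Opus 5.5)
The plan is to compare the two matrices entry by entry, using the definition of adjacency in \eqref{eq: edges} and \eqref{eq: adj matrix}. We index the rows and columns of all adjacency matrices by the elements of $G$, in the same fixed ordering. For $u,v \in G$, the $(u,v)$-entry of $A_S$ is $(A_S)_{u,v}=1$ exactly when there is an arc from $u$ to $v$ in $X(G,S)$, that is, when $vu^{-1}\in S$, and it is $0$ otherwise.

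The key step is to compute the $(u,v)$-entry of the transpose:
\[
(A_S^t)_{u,v} = (A_S)_{v,u} = 1 \iff uv^{-1} \in S .
\]
Since inversion is a bijection of $G$, we have $uv^{-1}\in S$ if and only if $(uv^{-1})^{-1} = vu^{-1} \in S^{-1}$. By \eqref{eq: edges} applied to the connection set $S^{-1}$, this says precisely that there is an arc from $u$ to $v$ in $X(G,S^{-1})$, i.e.\ $(A_{S^{-1}})_{u,v}=1$.

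Hence the two $0$–$1$ matrices $A_S^t$ and $A_{S^{-1}}$ have the same entries in every position, so $A_S^t = A_{S^{-1}}$. There is no genuine obstacle. The only point needing care is consistency: the same ordering of $G$ must be used for rows and columns of both matrices, and the convention that $(A)_{u,v}$ records an arc \emph{from} $u$ \emph{to} $v$ must be applied throughout. Even if the opposite convention were used, the argument is symmetric and gives the same conclusion. As a sanity check, when $S$ is symmetric the statement reduces to $A_S^t=A_S$, matching the fact noted before the lemma that $X(G,S)$ is then undirected.
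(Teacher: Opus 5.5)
Your proof is correct and is essentially the paper's own argument: an entrywise comparison of the two $0$--$1$ matrices, using the fact that an arc $u\to v$ exists in $X(G,S)$ exactly when $vu^{-1}\in S$, and that $uv^{-1}\in S$ if and only if $vu^{-1}\in S^{-1}$. It follows that $(A_S)_{v,u}=(A_{S^{-1}})_{u,v}$, which is exactly $A_S^t=A_{S^{-1}}$.
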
 

\begin{proof}
Let $A_S=(a_{ij})$ and $A_{S^{-1}}=(a'_{ij})$ be the adjacency matrices of $X(G,S)$ and $X(G,S^{-1})$, respectively. Then, $a_{ij}=1$ if and only if there is a directed edge from $x_i$ to $x_j$, i.e.\@ $x_jx_i^{-1} \in S$, which happens if and only if 
	$$ x_ix_j^{-1} = (x_jx_i^{-1})^{-1} \in S^{-1}. $$ 
Thus, $a'_{ij}=1$ if and only if $a_{ji}=1$, from which the result follows.
\end{proof}

Therefore, $S$ is symmetric if and only if $A_S$ is symmetric.
There is no such result as Lemma~\ref{lem: A y At} for the Cayley sum graph $X^+(G,S)$. However, if $S$ is normal, the adjacency matrix $A_S^+$ of $X^+(G,S)$ is symmetric.

It is easy to express the adjacency matrices of the bi-Cayley (sum) graph $BX^*(G;S_\ell,S_r,S_m)$ and the di-Cayley (sum) graph $DX^*(G;S_\ell,S_r,S_m)$ in terms of the adjacency matrices of the Cayley (sum) graphs $X^*(G,S_\ell)$, $X^*(G,S_r)$ and $X^*(G,S_m)$ as follows.

\begin{prop} \label{prop: AdjMat} 
The adjacency matrices of the bi and di-Cayley (sum) graphs $BX^*(G;S_\ell,S_r,S_m)$ and 
$DX^*(G;S_\ell,S_r,S_m)$ are given by 
\begin{equation} \label{eq: Adj mat}
	A_{BX^*} = \begin{pmatrix}	A_{X_\ell^*} & A_{X_m^*} \\ A_{X_m^*}^t & A_{X_r^*} \end{pmatrix}
	 \qquad \text{and} \qquad 
 	A_{DX^*} = \begin{pmatrix}	A_{X_\ell^*} & A_{X_m^*} \\ A_{X_m^*} & A_{X_r^*}  \end{pmatrix},
\end{equation}
where $A_{X_\ell^*}$, $A_{X_r^*}$ and $A_{X_m^*}$ are the adjacency matrices of the Cayley (sum) graphs $X^*(G,S_\ell)$, $X^*(G,S_r)$ and $X^*(G,S_m)$, respectively.  
In particular, we have that: 
\begin{enumerate}[$(a)$]
	\item If $A_{X_m^*}$ is symmetric then $A_{BX^*} = A_{DX^*}$. \msk 
	
	\item $A_{BX^*}$ and $A_{DX^*}$ are symmetric if and only if $A_{X_\ell^*}, A_{X_r^*}$ and $A_{X_m^*}$ are symmetric.
\end{enumerate}
\end{prop}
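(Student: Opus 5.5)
Order the vertex set $G\times\{0,1\}$ as $(x_1,0),\dots,(x_n,0),(x_1,1),\dots,(x_n,1)$, where $x_1,\dots,x_n$ is a fixed enumeration of $G$. The same enumeration is used for the adjacency matrices of the Cayley (sum) graphs $X^*(G,S)$, $S\in\{S_\ell,S_r,S_m\}$. With this ordering the adjacency matrix of $X^*(G;S_\ell,S_r,S_m)$ splits into four $n\times n$ blocks: the $(0,0)$, $(1,1)$, $(0,1)$ and $(1,0)$ blocks record arcs inside the left cover, inside the right cover, from the left cover to the right cover, and from the right cover to the left cover, respectively. The plan is to identify each block directly from Definitions \ref{defi: biCayleys} and \ref{defi: biCayleys+}, using the convention \eqref{eq: adj matrix}.

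For the diagonal blocks, condition ($a$) says there is an arc $(x_i,0)\to(x_j,0)$ iff $x_jx_i^{-1}\in S_\ell$ (resp.\ $x_jx_i\in S_\ell$). This is exactly the rule \eqref{eq: edges} defining $X^*(G,S_\ell)$, so the $(0,0)$ block is $A_{X_\ell^*}$. The same argument with condition ($b$) gives $A_{X_r^*}$ as the $(1,1)$ block, and this part is identical in the bi- and di-Cayley cases.

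For the di-Cayley graph, condition ($c$) gives an arc $(x_i,0)\to(x_j,1)$ iff $x_jx_i^{-1}\in S_m$ (resp.\ $x_jx_i\in S_m$). Hence the $(0,1)$ block has $(i,j)$-entry equal to that of $A_{X_m^*}$. Condition ($d$) is the same rule with the covers swapped, so the $(1,0)$ block is also $A_{X_m^*}$, which gives $A_{DX^*}$. In the bi-Cayley case only condition ($c$) is used, but each such edge is undirected. Therefore the $(1,0)$ entry at position $(j,i)$ equals the $(0,1)$ entry at $(i,j)$, so the $(1,0)$ block is $A_{X_m^*}^t$. The only point requiring care is that the convention of identifying an arc and its reverse with an undirected edge does not alter the matrix, since $a_{ij}=a_{ji}=1$ in both descriptions.

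Finally, ($a$) is immediate: if $A_{X_m^*}=A_{X_m^*}^t$, the two block matrices in \eqref{eq: Adj mat} coincide. For ($b$), compute the transpose blockwise, $\left(\begin{smallmatrix} A&B\\ C&D\end{smallmatrix}\right)^t=\left(\begin{smallmatrix} A^t&C^t\\ B^t&D^t\end{smallmatrix}\right)$, and compare blocks. For $A_{DX^*}$, symmetry holds iff $A_{X_\ell^*}$, $A_{X_r^*}$ and $A_{X_m^*}$ are symmetric. For $A_{BX^*}$, the off-diagonal blocks match automatically, so symmetry forces only the diagonal blocks to be symmetric. Thus the literal ``only if'' for $A_{X_m^*}$ in the bi-Cayley case is the one delicate point, and it is where I expect the main obstacle. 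I would handle it by reading ($b$) jointly: $A_{BX^*}$ and $A_{DX^*}$ are both symmetric iff all three blocks are symmetric, since symmetry of $A_{DX^*}$ forces $A_{X_m^*}$ to be symmetric, and then ($a$) gives $A_{BX^*}=A_{DX^*}$.
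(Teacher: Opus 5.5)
Your argument is correct and matches the paper's, which simply reads the four blocks off Definitions \ref{defi: biCayleys} and \ref{defi: biCayleys+} and calls (a) and (b) clear; your blockwise comparison of transposes is exactly the check left implicit there. Your caveat on (b) is also justified: the off-diagonal blocks of $A_{BX^*}$ are automatically transposes of each other, so $A_{BX^*}$ on its own is symmetric iff $A_{X_\ell^*}$ and $A_{X_r^*}$ are (e.g.\ $BX(\Z_3;\varnothing,\varnothing,\{1\})$ is undirected although $A_{X_m}$ is not symmetric), which agrees with the paper's own remark after Proposition \ref{prop: DX undirected}, so (b) holds only in the joint reading you adopt.
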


\begin{proof} 
The expressions in \eqref{eq: Adj mat} follows directly from the definition of the adjacency matrix and the structure of the bi/di-Cayley graphs.
The remaining assertions are clear. 
\end{proof}

In the case of the bi-Cayley graph $BX(G;S_\ell,S_r,S_m)$ with $G$ abelian the matrix $A_{BX}$ in \eqref{eq: Adj mat} was previously obtained in Lemma 3.1 in \cite{GL}.

For mirror bi/di-Cayley, the following is automatic from Proposition \ref{prop: AdjMat}. 
\begin{coro} \label{coro: mirror Adj mats}
The adjacency matrices of the mirror bi/di-Cayley (sum) graphs $BX^*(G;S,S,T)$ and $DX^*(G;S,S,T)$ are respectively given by
\begin{equation} \label{eq: mirror Adj mats}
	A_{BX^*} = \begin{pmatrix} A_{X_S^*} & A_{X_T^*} \\ A_{X_T^*}^t & A_{X_S^*} \end{pmatrix}  
		\qquad \text{and} \qquad 
	A_{DX^*} = \begin{pmatrix} A_{X_S^*} & A_{X_T^*} \\ A_{X_T^*} & A_{X_S^*} \end{pmatrix}
\end{equation}
where $A_{X_S^*}$ and $A_{X_T^*}$ are the adjacency matrices of $X^*(G,S)$ and $X^*(G,T)$ respectively.
\end{coro}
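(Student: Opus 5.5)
This corollary is a direct specialization of Proposition \ref{prop: AdjMat}, so the plan is to substitute the mirror hypotheses into \eqref{eq: Adj mat} and check that nothing else changes. Take $S_\ell = S_r = S$ and $S_m = T$ in Proposition \ref{prop: AdjMat}. The Cayley (sum) graphs $X^*(G,S_\ell)$ and $X^*(G,S_r)$ are then literally the same graph $X^*(G,S)$. Once we fix a single ordering $x_1,\dots,x_n$ of the elements of $G$, used for both copies $G\times\{0\}$ and $G\times\{1\}$, their adjacency matrices coincide, $A_{X_\ell^*} = A_{X_r^*} = A_{X_S^*}$. Likewise $A_{X_m^*} = A_{X_T^*}$.

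The key step, which is really just bookkeeping, is to make the vertex ordering explicit. We order $G\times\{0,1\}$ as $(x_1,0),\dots,(x_n,0),(x_1,1),\dots,(x_n,1)$, which is the ordering implicitly used in Proposition \ref{prop: AdjMat}. The block structure is then read off from Definitions \ref{defi: biCayleys} and \ref{defi: biCayleys+}:
\begin{enumerate}[$(a)$]
	\item the upper-left block records arcs of type ($a$), governed by $S$;
	\item the lower-right block records arcs of type ($b$), also governed by $S$;
	\item the off-diagonal blocks record arcs of types ($c$) and ($d$), governed by $T$.
\end{enumerate}
For the di-Cayley case both off-diagonal blocks have entries $1$ exactly when $x_jx_i^{-1}\in T$ (resp.\@ $x_jx_i\in T$), so both equal $A_{X_T^*}$. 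For the bi-Cayley case the edges between the covers are undirected. So if the upper-right block is $A_{X_T^*}$, the lower-left block must be its transpose, giving $A_{X_T^*}^t$.

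Substituting these identifications into \eqref{eq: Adj mat} gives \eqref{eq: mirror Adj mats}. The only point that needs care is using the same enumeration of $G$ in both covers. Otherwise the diagonal blocks would only be permutation-similar to $A_{X_S^*}$, not equal to it. This is the main (minor) obstacle, and fixing the common ordering at the outset removes it.
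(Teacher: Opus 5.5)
Your proposal is correct and follows the paper's route: the paper simply observes that the corollary is automatic from Proposition~\ref{prop: AdjMat} once you set $S_\ell=S_r=S$ and $S_m=T$. Your block-by-block check against Definitions~\ref{defi: biCayleys} and \ref{defi: biCayleys+}, and your remark about using one common enumeration of $G$ for both covers, only make explicit the bookkeeping that the paper leaves implicit.
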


Next we show that the adjacency matrices of Cayley (sum) graphs defined by normal subsets commute with each other. 
We will need the following basic result.	

\begin{lem} \label{lem: classfunction}
Let $S$ be a normal subset of a group $G$. Then, the characteristic function $\chi_S$ of $S$ is a class function of $G$. 
\end{lem}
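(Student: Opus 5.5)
The plan is to unwind the definitions directly. Recall that $\chi_S \colon G \to \{0,1\}$ is the characteristic (indicator) function of $S$:
$$ \chi_S(x) = 1 \text{ if } x\in S, \qquad \chi_S(x) = 0 \text{ if } x\notin S, $$
and that a class function on $G$ is a function constant on conjugacy classes, i.e.\@ $f(gxg^{-1}) = f(x)$ for all $g,x\in G$. So it suffices to show that $\chi_S(gxg^{-1}) = \chi_S(x)$ for every $g, x \in G$.

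Fix $g,x\in G$. We use normality of $S$ in the form \eqref{eq: normal}, namely $gSg^{-1} = S$ for all $g\in G$, and split into two cases.

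\emph{Case $x\in S$.} Then $gxg^{-1}\in gSg^{-1}=S$, so $\chi_S(gxg^{-1})=1=\chi_S(x)$.

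\emph{Case $x\notin S$.} Suppose, for contradiction, that $gxg^{-1}\in S$. Then $x = g^{-1}(gxg^{-1})g \in g^{-1}Sg$. Applying \eqref{eq: normal} with $g^{-1}$ in place of $g$ gives $g^{-1}Sg = S$, so $x \in S$, a contradiction. Hence $gxg^{-1}\notin S$ and both values of $\chi_S$ are $0$.

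In either case $\chi_S(gxg^{-1})=\chi_S(x)$, so $\chi_S$ is a class function. Equivalently, $S$ is a union of conjugacy classes, as already noted after \eqref{eq: normal}. There is no real obstacle here. The only point needing care is that the second case uses normality with respect to $g^{-1}$ as well as $g$, and this is covered because \eqref{eq: normal} is assumed for every element of $G$.
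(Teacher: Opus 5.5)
Your proof is correct and takes essentially the same approach as the paper: both unwind the definition of a class function and use $gSg^{-1}=S$ (for $g$ and for $g^{-1}$) to obtain $gxg^{-1}\in S \iff x\in S$. Your two-case split simply states explicitly both directions of the equivalence that the paper compresses into a short chain of ``if and only if'' steps.
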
	

\begin{proof}
By definition, $\chi_S(x)=1$ if $x\in S$ and $\chi_S(x)=0$ if $x\notin S$.
To see that $\chi_S$ is a class function we must see that given $x\in G$ then 
	$$ \chi_S(gxg^{-1}) = \chi(x) \qquad \forall \,g\in G. $$ 
But $\chi(gxg^{-1})=1$ if and only if $gxg^{-1}=s \in S$, that is if $x=g^{-1}sg \in S$. Thus, $x \in S$, since $S$ is normal, and hence $\chi(x)=1$.   
\end{proof}

The next result will be crucial for the computation of the spectrum of bi/di-Cayley (sum) graphs, but it is interesting on its own.
It says that the adjacency matrices of two normal Cayley (sum) graphs always commute with each other. 
\begin{thm} \label{thm: ASAT=ATAS}
Let $G$ be a finite group and $S,T \subset G$. If $S,T$ are normal, then 
	$$A_{S^*} A_{T^*} = A_{T^*} A_{S^*},$$ 
where $A_{S^*}$ and $A_{T^*}$ are the adjacency matrices of $X^*(G,S)$ and $X^*(G,T)$. 
\end{thm}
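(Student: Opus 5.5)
The plan is a direct entrywise computation. I read the statement as two separate claims: $A_S A_T = A_T A_S$ for the Cayley graphs $X(G,S), X(G,T)$, and $A_S^+ A_T^+ = A_T^+ A_S^+$ for the Cayley sum graphs. I index the rows and columns of all matrices by the elements of $G$. By \eqref{eq: edges}, the entries are
$$ (A_S)_{u,v} = \chi_S(vu^{-1}) \qquad\text{and}\qquad (A^+_S)_{u,v} = \chi_S(vu), $$
where $\chi_S$ is the characteristic function of $S$.

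\emph{Cayley case.} For $u,w\in G$,
$$ (A_SA_T)_{u,w}=\sum_{v\in G}\chi_S(vu^{-1})\chi_T(wv^{-1}). $$
I will substitute $x=vu^{-1}$. Then $v=xu$ runs over $G$ as $x$ does, and $wv^{-1}=(wu^{-1})x^{-1}$. Setting $g=wu^{-1}$ and $t=gx^{-1}$, this gives
$$ (A_SA_T)_{u,w}=\#\{(s,t)\in S\times T : ts=g\}. $$
Exchanging the roles of $S$ and $T$ in the same computation gives
$$ (A_TA_S)_{u,w}=\#\{(t,s)\in T\times S: st=g\}. $$
So everything reduces to showing that these two counts agree for every $g\in G$.

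\emph{Key step: the counting bijection.} Consider the map $(s,t)\mapsto (t^{-1}st,\,t)$. It sends pairs with $st=g$ to pairs with $t\,(t^{-1}st)=st=g$. Normality of $S$ (equivalently, Lemma \ref{lem: classfunction}: $\chi_S$ is a class function) guarantees that $t^{-1}st\in S$. The inverse map is $(s',t)\mapsto(ts't^{-1},t)$, so the map is a bijection between the two sets. This step is the only place where a hypothesis is used, and in fact only normality of one of the sets is needed.

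\emph{Sum case.} Here
$$ (A^+_SA^+_T)_{u,w}=\sum_v\chi_S(vu)\chi_T(wv). $$
I will substitute $x=vu$, so that $v=xu^{-1}$ and $wv=(wu)x^{-1}$. With $g=wu$ this yields $\#\{(s,t): ts=g\}$. The symmetric product yields $\#\{(t,s): st=g\}$, and the same conjugation bijection finishes the proof. Equivalently, in both cases the entries are convolutions $\chi_T*\chi_S$ and $\chi_S*\chi_T$ evaluated at $wu^{\mp1}$, and convolution with a class function is commutative.

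I expect no real obstacle. The main point of care is getting the substitution right in the sum case, where the orientation conventions ($gh$ versus $hg$) could flip the order of the product. That is harmless here, since the bijection handles either order.
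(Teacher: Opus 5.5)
Your argument for the Cayley graphs $X(G,S)$, $X(G,T)$ is correct and is essentially the paper's proof. The paper reaches the same two counts, $\#\{(s,t)\in S\times T: ts=g\}$ and $\#\{(s,t)\in S\times T: st=g\}$ with $g=wu^{-1}$, and equates them using the centrality of $\alpha_S=\sum_{s\in S}s$ in $\C[G]$. Your conjugation bijection $(s,t)\mapsto(t^{-1}st,t)$ is the combinatorial content of $\alpha_S\alpha_T=\alpha_T\alpha_S$, and it makes explicit that normality of only one of the two sets is needed.

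The sum case, however, fails at exactly the step you flagged. From $x=vu$ you get $v=xu^{-1}$, hence $wv=wxu^{-1}$, not $(wu)x^{-1}$. Already for abelian $G$ this reads $w+v=w-u+x$, not $w+u-x$. Doing the substitution correctly, and conjugating to move $u$ across (using normality of $T$, respectively of $S$), gives
\[
(A^+_SA^+_T)_{u,w}=\#\{(s,t)\in S\times T: ts^{-1}=h\},\qquad (A^+_TA^+_S)_{u,w}=\#\{(s,t)\in S\times T: ts^{-1}=h^{-1}\},
\]
with $h=u^{-1}w$. An inversion appears, and no conjugation bijection can absorb it.

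Equivalently, let $P$ be the permutation matrix of $g\mapsto g^{-1}$. Then $A^+_S=PA_S$, and for normal $S$ one has $PA_SP=A_{S^{-1}}$. Hence $A^+_SA^+_T=A_{S^{-1}}A_T$, whereas $A^+_TA^+_S=A_{T^{-1}}A_S$, and these differ in general.

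In fact the sum-graph half of the statement is false. Take $G=\Z_6$, $S=\{1\}$, $T=\{3\}$, which are normal since $G$ is abelian. Both $X^+(G,S)$ and $X^+(G,T)$ are loopless perfect matchings, so $A^+_S$ and $A^+_T$ are the permutation matrices of the involutions $u\mapsto 1-u$ and $u\mapsto 3-u$. Then $A^+_SA^+_T$ has its $1$'s at the positions $(u,u+2)$, while $A^+_TA^+_S$ has them at $(u,u-2)$.

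So no argument can close this gap. The paper's own one-line treatment of the sum case (``define $\alpha_S^+=\sum_{s\in S}s^{-1}$ and proceed similarly'') has the same defect. What does hold is the sum-case commutation under the extra hypothesis that $S$ and $T$ are also symmetric: then $A^+_SA^+_T=A_SA_T=A_TA_S=A^+_TA^+_S$ by your Cayley-case argument.
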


\begin{proof}
We will prove the commutativity of the adjacency matrices of the Cayley graphs $X(G,S)$ and $X(G,T)$, 
that is, we will show that for any $i,j$ it holds
	$$ (A_S A_T)_{ij} = (A_T A_S)_{ij}. $$ 
The corresponding proof for the sum graphs, i.e.\@ that $(A_S^+ A_T^+)_{ij} = (A_T^+ A_S^+)_{ij}$ for any $i,j$ 
is analogous.

Assume that $G=\{g_1,\ldots,g_n\}$. Hence $G$ is the vertex set of the Cayley (sum) graphs $\G_S^*=X^*(G,S)$ and $\G_T^*=X^*(G,T)$, that is $V_{\G_S^*}=V_{\G_T^*}=G$.

On the one hand, using \eqref{eq: adj matrix} we have that 
\begin{equation} \label{eq: ASATij}
	(A_S A_T)_{ij}  = \sum_{k=1}^n (A_S)_{ik} (A_T)_{kj} = \# \{ 1\le k \le n : (A_S)_{ik} = (A_T)_{kj} = 1 \}.	
\end{equation} 
Now, recalling that in a Cayley graph $X(G,U)$ we have that $uv\in E$ if and only if $vu^{-1}  \in U$ (see \eqref{eq: edges}), we have that
\begin{equation} \label{eq: lados en X}
	(A_S)_{ik}=1 \qquad \Leftrightarrow \qquad g_i g_k \in E_{\G_S} \qquad \Leftrightarrow \qquad g_kg_i^{-1} \in S;	
\end{equation}
and hence we have obtained that 
\begin{align*}
	(A_S A_T)_{ij} = \# \{ 1\le k \le n \: : \: g_kg_i^{-1} \in S \: \text{ and } \: g_jg_k^{-1} \in T \} . 
	\end{align*}
That is, $g_k= sg_i = t^{-1}g_j$ for some $s \in S$ and some $t \in  T$. In this way, we arrive at 
$(A_S A_T)_{ij} = \# \{ (s,t) \in S\times T : s g_i = t^{-1}g_j  \}$, from which we get 
	$$ (A_S A_T)_{ij}  = \# \{ (s,t) \in S\times T :  g_j g_i^{-1} = ts \}. $$ 
Similarly, we obtain that 
	$$ (A_T A_S)_{ij}  = \# \{ (s,t) \in S\times T : g_j g_i^{-1} = st \}. $$

On the other hand, recall that the group algebra $\C[G]$ of $G$ consists of the elements of the form 
	$$ \alpha = \sum_{g \in G} a_g g $$ 
with $a_g \in \C$. One can think of the coefficients as functions $a: G \rightarrow \C$ where $a(g)=a_g$. Clearly, $\alpha$ is in the center $\mathcal{Z}(\C[G])$ of $\C[G]$ if and only if 
	$$ a(g)=a(xgx^{-1}) $$ 
for every $g,x \in G$, that is if $a$ is a class function of $G$.

Now, for any $R \subset G$ we define the element 
	$$ \alpha_R = \sum_{r \in R} r \in \C[G]. $$
Then, we have 
\begin{align*}
	\alpha_S \alpha_T = \big( \sum_{s \in S} s \big) \big( \sum_{t \in T} t \big) = \sum_{s\in S} \sum_{t \in T} st = \sum_{g \in G} a_g g, \\
	\alpha_T \alpha_S = \big( \sum_{t \in T} t \big) \big( \sum_{s \in S} s \big) = \sum_{t\in T} \sum_{s \in S} ts = \sum_{g \in G} b_g g,
\end{align*}
where $a_g = \# \{ (s,t) \in S \times T : st=g \}$ and $b_g = \# \{ (s,t) \in S \times T : ts=g \}$.

Notice that for any normal subset $S \subset G$, the element $\alpha_S \in \C[G]$ defined by 
$$ \alpha_S = \sum_{s\in S} s = \sum_{g \in G} \chi_S(g) g$$ 
is in fact in the center of $\C[G]$, by Lemma \ref{lem: classfunction}. 
Thus $\alpha_S$ and $\alpha_T$ commute with each other, i.e.
	$$\alpha_S\alpha_T = \alpha_T \alpha_S,$$ 
and thus $a_g=b_g$ for every $g \in G$, which implies that $(A_S A_T)_{ij} = (A_T A_S)_{ij}$ for any $i,j$, that is $A_SA_T=A_TA_S$ as we wanted to show.  

Finally, to prove that $A_S^+ A_T^+ = A_T^+ A_S^+$, we define the elements 
	$$ \alpha_S^+ = \sum_{s\in S} s^{-1} \qquad \text{and} \qquad \alpha_T^+ = \sum_{t\in T} t^{-1} $$
which are in the center of $ \C[G]$ and proceed similarly as before, using the definition of Cayley sum graphs 
in \eqref{eq: lados en X}.
\end{proof}

Notice that the same proof is valid for $G=\{g_k\}_{k\in \N}$ denumerable and $S,T$ finite subsets, since \eqref{eq: ASATij} still makes sense in this case summing over $k\in \N$.
However, we do not need this in this work.

\begin{rem} \label{rem: BBt=BtB}
In general $BB^t \ne B^tB$. However, if $B$ is the adjacency matrix of a normal Cayley graph $X(G,S)$ then 
	$$ BB^t = B^tB, $$
that is $B$ is normal.
This follows from the fact that $S$ is normal if and only if $S^{-1}$ is normal, by Lemma~\ref{lem: A y At} and Theorem  \ref{thm: ASAT=ATAS}.
\end{rem}

In analogy for the Cayley graphs, we say that a bi/di-Cayley (sum) graph $X^*(G;S,R,T)$ is \textit{normal} (resp.\@ \textit{symmetric}) if $R,S,T$ are normal (resp.\@ symmetric) subsets of $G$.

Putting together the results in the section we obtain that the adjacency matrices of two normal mirror bi/di-Cayley (sum) graphs with symmetric di-connection sets commute, thus generalizing Theorem \ref{thm: ASAT=ATAS} for Cayley (sum) graphs.

\begin{coro} \label{coro: commuting mirror adj mat}
Let $G$ be a finite group and $\G=X^*(G;S,S,T)$ and $\G'=X^*(G;S',S',T')$ are two normal mirror bi/di-Cayley (sum) graphs with adjacency matrices $A_\G$ and $A_{\G'}$ respectively. If $T,T'$ are symmetric, then 
	\begin{equation} \label{eq: commuting matrices}
		A_\G A_{\G'} = A_{\G'} A_{\G}.
	\end{equation}	
\end{coro}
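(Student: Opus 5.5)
The plan is to reduce to block-matrix multiplication using Corollary \ref{coro: mirror Adj mats}, and then apply Theorem \ref{thm: ASAT=ATAS} entry by entry.

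First we dispose of the bi/di distinction. Since $T$ and $T'$ are symmetric, they are also normal. For Cayley graphs, Lemma \ref{lem: A y At} gives $A_{X_T}^t = A_{X_{T^{-1}}} = A_{X_T}$. For Cayley sum graphs, normality of $T$ makes $A^+_{X_T}$ symmetric, as remarked after Lemma \ref{lem: A y At}. So in every case $A_{X_T^*}^t = A_{X_T^*}$, and likewise for $T'$. By Proposition \ref{prop: AdjMat}$(a)$ the bi- and di-Cayley matrices then coincide, and both graphs have adjacency matrices of the form
$$ A_\G = \begin{pmatrix} A & B \\ B & A \end{pmatrix}, \qquad A_{\G'} = \begin{pmatrix} A' & B' \\ B' & A' \end{pmatrix}, $$
with $A = A_{X_S^*}$, $B = A_{X_T^*}$, $A' = A_{X_{S'}^*}$ and $B' = A_{X_{T'}^*}$.

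Next we multiply the blocks. We get
$$ A_\G A_{\G'} = \begin{pmatrix} AA' + BB' & AB' + BA' \\ BA' + AB' & BB' + AA' \end{pmatrix}, $$
and $A_{\G'}A_\G$ is the same expression with the primed and unprimed matrices swapped. Equality therefore reduces to the four identities
$$ AA' = A'A, \quad BB' = B'B, \quad AB' = B'A, \quad BA' = A'B. $$
Each one is an instance of Theorem \ref{thm: ASAT=ATAS} applied to a pair of normal subsets drawn from $\{S, T, S', T'\}$, all of which are normal by hypothesis. All four matrices are of the same type, either all Cayley or all Cayley sum, which is what the theorem requires.

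The only point needing care is the first step. In the bi-Cayley case the lower-left block is a transpose, and symmetry of $T$ and $T'$ is exactly what removes it. Without that hypothesis, commuting $B^t$ with $B'$ would additionally require Remark \ref{rem: BBt=BtB}-type arguments, together with the fact that $T^{-1}$ is normal.
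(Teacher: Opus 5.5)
For Cayley graphs your argument is correct and is exactly the paper's: take the block form from Corollary~\ref{coro: mirror Adj mats}, use symmetry of the $T$-blocks to remove the transpose in the bi-Cayley case, multiply the blocks, and apply Theorem~\ref{thm: ASAT=ATAS} to each pair of blocks. The genuine gap is the sum case. There you invoke Theorem~\ref{thm: ASAT=ATAS} for Cayley sum matrices, but that half of the theorem is false, and so is the statement for sum graphs.

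Let $P$ be the permutation matrix of $g\mapsto g^{-1}$. Then $A_{X_U^+}=PA_{X_U}$, and for normal $U$ one has $PA_{X_U}P=A_{X_{U^{-1}}}$. Hence $A_{X_U^+}A_{X_V^+}=A_{X_{U^{-1}}}A_{X_V}$, whereas $A_{X_V^+}A_{X_U^+}=A_{X_{V^{-1}}}A_{X_U}$, and these need not agree. For a concrete case, take $\mathbb{Z}_3$ with $U=\{1\}$, $V=\{0\}$, and put $A=A_{X_U^+}$, $B=A_{X_V^+}$. Then $(AB)_{uw}=1$ exactly when $w=u-1$, while $(BA)_{uw}=1$ exactly when $w=u+1$, so $AB\neq BA$.

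Now let $\Gamma=X^+(\mathbb{Z}_3;\{1\},\{1\},\{0\})$ and $\Gamma'=X^+(\mathbb{Z}_3;\{0\},\{0\},\{0\})$. All sets are normal, $T=T'=\{0\}$ is symmetric, and the bi and di versions coincide. We have $A_\Gamma=\bigl(\begin{smallmatrix}A&B\\B&A\end{smallmatrix}\bigr)$ and $A_{\Gamma'}=\bigl(\begin{smallmatrix}B&B\\B&B\end{smallmatrix}\bigr)$. Every block of $A_\Gamma A_{\Gamma'}-A_{\Gamma'}A_\Gamma$ equals $AB-BA\neq 0$.

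So no argument can give the sum case as stated. The paper's proof has the same defect, since it rests on the same theorem, whose sum half (sketched via $\alpha_S^+=\sum_{s}s^{-1}$) does not go through.

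Your block computation does prove the non-sum case. It also proves the sum case under the extra assumption that $S,S'$ are symmetric as well. Then the identity above gives $A_{X_U^+}A_{X_V^+}=A_{X_U}A_{X_V}$ for all $U,V\in\{S,T,S',T'\}$, and your four identities follow from the Cayley case of Theorem~\ref{thm: ASAT=ATAS}.

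Two smaller remarks. You are right, and more careful than the paper, that symmetry of $A_{X_T^+}$ comes from normality of $T$, not from its symmetry. However, ``since $T,T'$ are symmetric, they are also normal'' is a non sequitur; normality is simply part of the hypothesis.
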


\begin{proof}
By Corollary \ref{coro: mirror Adj mats}, the adjacency matrices of the mirror bi/di-Cayley (sum) graphs have a $2 \times 2$ block structure. 
In the bi-Cayley case, since $T$ and $T'$ are symmetric subsets, their adjacency matrices are symmetric, i.e., $A_{X_T^*}^t = A_{X_T^*}$ and $A_{X_{T'}^*}^t = A_{X_{T'}^*}$. Thus, for both bi-Cayley and di-Cayley graphs, the adjacency matrices of $\G$ and $\G'$ share the same block-symmetric structure:
	$$	A_\G = \begin{pmatrix} A_{X_S^*} & A_{X_T^*} \\ A_{X_T^*} & A_{X_S^*} \end{pmatrix} 
	\qquad \text{and} \qquad 
	A_{\G'} = \begin{pmatrix} A_{X_{S'}^*} & A_{X_{T'}^*} \\ A_{X_{T'}^*} & A_{X_{S'}^*} \end{pmatrix}. $$
Computing the product $A_\G A_{\G'}$ yields
	$$ A_\G A_{\G'} = \begin{pmatrix} 
		A_{X_S^*} A_{X_{S'}^*} + A_{X_T^*} A_{X_{T'}^*} & A_{X_S^*} A_{X_{T'}^*} + A_{X_T^*} A_{X_{S'}^*} \\[1mm] 
		A_{X_T^*} A_{X_{S'}^*} + A_{X_S^*} A_{X_{T'}^*} &  A_{X_T^*} A_{X_{T'}^*} + A_{X_S^*} A_{X_{S'}^*} \end{pmatrix}. $$
Since $S,S',T,T'$ are normal subsets, Theorem \ref{thm: ASAT=ATAS} guarantees that any pair of these individual blocks commutes (i.e., $A_{X_U^*} A_{X_V^*} = A_{X_V^*} A_{X_U^*}$ for all $U,V \in \{S,T,S',T'\}$). Applying this commutativity to every term in the block matrix directly yields $A_{\G'} A_{\G}$, completing the proof. 
\end{proof}

\begin{rem}
The preceding commutativity result does not hold in general for non-mirror bi/di-Cayley (sum) graphs. For arbitrary normal bi/di-Cayley (sum) graphs $\G=X^*(G; S_\ell, S_r, S_m)$ and $\G'=X^*(G; S_\ell', S_r', S_m')$, we can only guarantee that the main diagonal blocks of their adjacency matrix products coincide; that is, the block diagonals of $A_\G A_{\G'}$ and $A_{\G'} A_\G$ are equal as $2 \times 2$ block matrices, i.e.\@ $diag(A_\G A_{\G'}) = diag(A_{\G'} A_\G)$.
\end{rem}

\section{The spectrum of Cayley (sum) graphs} \label{sec: spec}
Here we focus on the spectrum of Cayley (sum) graphs $X^*(G,S)$ for $G$ finite. 
We will give the eigenvalues of these graphs in terms of characters of $G$. 
We recall the known results, because we will use them in the next section to give the eigenvalues of the bi/di-Cayley (sum) graphs $X^*(G;S_\ell,S_r,S_m)$.

It is well-known that the spectra of a Cayley (sum) graph $X^*(G,S)$ can be computed by using the irreducible characters $\hat{G}$ of $G$. We now recall these results in four propositions distinguishing, for ease, the case $G$ abelian or not, for the Cayley graphs $X(G,S)$ and Cayley sum graphs $X^+(G,S)$. 

Given an irreducible character $\chi\in\hat{G}$, we will need the notations
\begin{equation} \label{eq: eigenvalue and eigenvector}
	\lambda_{\chi}=\chi(S)=\sum_{s\in S}\chi(s) \qquad\text{and}\qquad v_{\chi}=(\chi(g))_{g\in G}.
\end{equation}

\subsection*{The case $G$ abelian} 
In this case, the following result on the eigenvalues of Cayley graphs is classic. 

\begin{prop} \label{prop: spec XGS G abelian}
Let $G$ be a finite 
abelian group and $S \subset G$. 
Then, the set of eigenvalues of $\G=X(G,S)$ is 
	$$ \mathrm{Eig}(\G)=\{ \lambda_{\chi}=\chi(S)\}_{\chi \in \hat G}. $$
The eigenvalue $\lambda_{\chi}$ has associated eigenvector $v_{\chi}$. 
\end{prop}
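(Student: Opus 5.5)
The plan is to verify directly that each $v_\chi$ is an eigenvector of the adjacency matrix $A=A_S$ of $\G=X(G,S)$, and then to use orthogonality of characters to show that these eigenvalues account for the whole spectrum. Since $G$ is abelian, every irreducible character $\chi\in\hat G$ is a homomorphism $\chi:G\to\C^*$, and there are exactly $|G|$ of them.

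\emph{Step 1 (eigenvector equation).} Index rows and columns of $A$ by $G$. By \eqref{eq: edges}, $A_{u,v}=1$ if and only if $vu^{-1}\in S$, that is, $v=su$ for some $s\in S$. For $u\in G$ we then compute
$$ (A v_\chi)_u=\sum_{v\in G}A_{u,v}\chi(v)=\sum_{s\in S}\chi(su)=\Big(\sum_{s\in S}\chi(s)\Big)\chi(u)=\chi(S)\,\chi(u), $$
using multiplicativity of $\chi$. Hence $A v_\chi=\lambda_\chi v_\chi$, with $v_\chi\neq 0$ because $\chi(e)=1$.

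\emph{Step 2 (completeness).} The vectors $\{v_\chi\}_{\chi\in\hat G}$ are pairwise orthogonal with respect to the standard Hermitian inner product, by the first orthogonality relation $\sum_{g}\chi(g)\overline{\psi(g)}=|G|\delta_{\chi\psi}$. They are therefore linearly independent. Since $|\hat G|=|G|$, they form a basis of $\C^{G}$ consisting of eigenvectors. Consequently $A$ is diagonalizable and its spectrum, counted with multiplicity, is exactly the multiset $\{\chi(S)\}_{\chi\in\hat G}$. In particular $\mathrm{Eig}(\G)=\{\lambda_\chi\}_{\chi \in \hat G}$.

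The main point requiring care is orientation. One has to check that the row/column convention in \eqref{eq: adj matrix} combined with \eqref{eq: edges} gives $\chi(su)$ and not $\chi(s^{-1}u)$. With the other convention the eigenvalue would be $\chi(S^{-1})=\overline{\chi(S)}$. The computation above shows that the paper's convention yields $\chi(S)$. Everything else follows from standard character theory of finite abelian groups, which we assume.
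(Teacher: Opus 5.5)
Your argument is correct and complete. The paper gives no proof of its own here: it cites Corollary~3.2 of Babai, where the abelian case is obtained by specializing a general theorem, expressed through irreducible characters, for Cayley graphs of arbitrary finite groups.

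\textbf{Comparison of the two routes.} Babai's route gives generality (non-abelian groups), at the cost of heavier representation theory. It also leaves implicit how the source's arc convention matches the paper's. Your route is the direct, elementary one:
\begin{itemize}
\item the computation $(Av_\chi)_u=\sum_{s\in S}\chi(su)=\chi(S)\chi(u)$ is exactly right for the conventions \eqref{eq: edges} and \eqref{eq: adj matrix};
\item orthogonality together with $|\hat G|=|G|$ gives an orthogonal eigenbasis.
\end{itemize}
This yields more than the statement asserts: diagonalizability and the full multiset of eigenvalues. It is also self-contained, and it fixes the pairing $\lambda_\chi\leftrightarrow v_\chi$ within the paper's own convention.

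\textbf{On orientation.} The convention only affects this pairing, not the set $\mathrm{Eig}(\G)$. The opposite convention replaces $A$ by $A^t=A_{S^{-1}}$ (Lemma~\ref{lem: A y At}), which has the same characteristic polynomial. Concretely, $\{\overline{\chi(S)}\}_{\chi\in\hat G}=\{\bar\chi(S)\}_{\chi\in\hat G}=\{\chi(S)\}_{\chi\in\hat G}$, because $\chi\mapsto\bar\chi$ permutes $\hat G$. So your care about orientation is needed only for the eigenvector claim in the statement.
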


\begin{proof}
See Corollary 3.2 in \cite{Babai}.
\end{proof}

So, the eigenvalues depend on $\chi$ and $S$, but the eigenvectors depend only on $\chi$.

For Cayley sum graphs $X^+(G,S)$ with $G$ abelian we have the following analogous result. 
We need to recall that given a character $\chi:G \rightarrow \C$ of $G$, $\chi$ is said to be a real character if it takes real values, that is $\chi(g) \in \R$ for every $g\in G$. Equivalently, $\chi$ is a real character if and only if $\chi=\chi^{-1}$ in the group $\hat G$. We denote the subgroup of real characters of $G$ by $\hat{G}_\R$.

\begin{prop} \label{prop: spec XGS+ G abelian}
Let $G$ be a finite abelian group and $S \subset G$. 
Then, the set of eigenvalues of $\G^+=X^+(G,S)$ is 
\begin{equation} \label{eq: eig sum}
	{\rm Eig}(\G^+) = \{ \lambda_\chi = \chi(S) \}_{\chi \in \hat{G}_\R} \cup \{ \lambda^\pm_\chi = \pm |\chi(S)| \}_{\chi \notin \hat{G}_\R} \subset \R.
\end{equation}	
\end{prop}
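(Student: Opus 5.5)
We work with the adjacency matrix $A^+$ of $X^+(G,S)$, indexed by $G$, where $(A^+)_{hg}=1$ if and only if $gh\in S$. For $\chi\in\hat G$ we use the vector $v_\chi=(\chi(g))_{g\in G}$ of \eqref{eq: eigenvalue and eigenvector}. Since $G$ is abelian, $A^+$ is symmetric, so the spectrum is real, and we only need to find its eigenvalues.

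First we compute how $A^+$ acts on $v_\chi$. For each $h\in G$,
$$(A^+v_\chi)_h=\sum_{g:\,gh\in S}\chi(g)=\sum_{s\in S}\chi(sh^{-1})=\chi(S)\,\overline{\chi(h)}=\chi(S)\,(v_{\bar\chi})_h,$$
so $A^+v_\chi=\chi(S)\,v_{\bar\chi}$, where $\bar\chi=\chi^{-1}$. The key point is that $A^+$ sends $v_\chi$ to a multiple of $v_{\bar\chi}$, not of $v_\chi$ itself. The vectors $\{v_\chi\}_{\chi\in\hat G}$ form an orthogonal basis of $\C^G$. The map $\chi\mapsto\bar\chi$ is an involution on $\hat G$, so $\C^G$ splits into $A^+$-invariant subspaces: lines $\langle v_\chi\rangle$ for $\chi=\bar\chi$ (that is, $\chi\in\hat G_\R$), and planes $\langle v_\chi,v_{\bar\chi}\rangle$ for the pairs $\{\chi,\bar\chi\}$ with $\chi\notin\hat G_\R$.

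Next we read off the eigenvalues on each invariant subspace. For real $\chi$, $v_\chi$ is an eigenvector with eigenvalue $\chi(S)$, which is real. For a pair $\{\chi,\bar\chi\}$ with $\chi\neq\bar\chi$, the matrix of $A^+$ on $\langle v_\chi,v_{\bar\chi}\rangle$ has zero diagonal. Its off-diagonal entries are $\chi(S)$ and $\bar\chi(S)=\overline{\chi(S)}$. Its characteristic polynomial is therefore $x^2-\chi(S)\overline{\chi(S)}=x^2-|\chi(S)|^2$, giving eigenvalues $\pm|\chi(S)|$.

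The eigenvectors can be given explicitly. Write $\chi(S)=|\chi(S)|e^{i\theta}$ when $\chi(S)\neq0$. Then $e^{-i\theta/2}v_\chi\pm e^{i\theta/2}v_{\bar\chi}$ are eigenvectors with eigenvalues $\pm|\chi(S)|$. When $\chi(S)=0$, both $v_\chi$ and $v_{\bar\chi}$ are eigenvectors for the eigenvalue $0$.

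Collecting all invariant subspaces, which together span $\C^G$, gives \eqref{eq: eig sum}: each unordered pair $\{\chi,\bar\chi\}$ contributes the two eigenvalues $\pm|\chi(S)|$. There is no real obstacle. The only care needed is the bookkeeping that the index $\chi\notin\hat G_\R$ in \eqref{eq: eig sum} is meant up to the pairing $\chi\leftrightarrow\bar\chi$, so that the total count of eigenvalues is $|G|$.
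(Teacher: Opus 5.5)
Your proof is correct and complete. The identity $A^+v_\chi=\chi(S)\,v_{\bar\chi}$, the splitting of $\C^G$ into the lines $\langle v_\chi\rangle$ with $\chi=\bar\chi$ and the planes $\langle v_\chi,v_{\bar\chi}\rangle$, and the $2\times 2$ blocks with off-diagonal entries $\chi(S)$ and $\overline{\chi(S)}$ give exactly \eqref{eq: eig sum}. Your remark that each non-real pair $\{\chi,\bar\chi\}$ should be counted once is the right way to read the multiplicities.

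The paper gives no argument of its own here. It just cites Theorem~2.1 of \cite{DVGM}, and what you wrote is essentially the standard proof of that result: your eigenvectors $e^{-i\theta/2}v_\chi\pm e^{i\theta/2}v_{\bar\chi}$ are scalar multiples of $|\chi(S)|\,v_\chi\pm\chi(S)\,v_{\bar\chi}$.

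The closest argument the paper does prove is the group-algebra proof of Proposition~\ref{prop: spec XGS+}. There, the inversion map $\iota$, which permutes the simple components of $\C[G]$, plays the role of your involution $\chi\mapsto\bar\chi$. For abelian $G$ those components are the lines $\C e_\chi$ spanned by the primitive idempotents. The map $\iota$ swaps $e_\chi$ and $e_{\bar\chi}$, and $\rho_z$ restricted to $\C e_\chi\oplus\C e_{\bar\chi}$ is precisely your block.

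Your coordinate version gives explicit eigenvectors and multiplicities. It also shows why the modulus appears: the two swapped pieces carry the conjugate scalars $\chi(S)$ and $\overline{\chi(S)}$, so the eigenvalues are $\pm|\chi(S)|$ rather than $\pm\chi(S)$. Note that the formula $\pm\tfrac{1}{\chi(1)}\chi(S)$ written in Proposition~\ref{prop: spec XGS+} agrees with yours only when $\chi(S)$ is real. The group-algebra formulation, on the other hand, is what is needed to pass to non-abelian $G$ with $S$ normal.
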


\begin{proof}
	See Theorem 2.1 in \cite{DVGM}. 
\end{proof}

\subsection*{The case of arbitrary $G$}
In the general case, when $G$ is any finite group, similar results as the previous ones hold for $X(G,S)$ and $X^+(G,S)$. 
In this case we need that $S$ is a normal subset of $G$ so that one can use the character theory of $G$. 

For Cayley graphs we have the following. 

\begin{prop} \label{prop: spec XGS} 
Let $G$ be a finite group and $S$ a normal subset of $G$. 
Then, the set of eigenvalues of the Cayley graph $\G=X(G,S)$ is
	$$ {\rm Eig}(\G) = \{ \lambda_{\chi} = \chi(1)^{-1} \cdot \chi(S) \}_{\chi \in \hat{G}}, $$ 
where the multiplicity of $\lambda_\chi$ is given by 
	$$ m(\lambda_\chi) = \sum_{\eta \in \hat G, \, \lambda_\eta = \lambda_\chi} \eta(1)^2.$$
\end{prop}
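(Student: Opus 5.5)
The plan is to work in the group algebra $\C[G]$ and use the central element $\alpha_S=\sum_{s\in S}s$ introduced in the proof of Theorem \ref{thm: ASAT=ATAS}. I will identify $\C^G$ with $\C[G]$, basis vectors $e_g \leftrightarrow g$. With the edge convention \eqref{eq: edges}, $(A_S)_{ij}=1$ iff $g_j\in Sg_i$, so $A_S e_{g}=\sum_{s\in S} e_{sg}$ up to transposition. Thus $A_S$ (or $A_S^t=A_{S^{-1}}$, by Lemma \ref{lem: A y At}) is the matrix of left multiplication $L_{\alpha}$ by $\alpha_S$ or $\alpha_{S^{-1}}$ on the regular representation. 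Since $S$ is normal, so is $S^{-1}$, and transposing does not change eigenvalues. I may also check that $\chi(S^{-1})=\overline{\chi(S)}$ and that the resulting multiset is the same after reindexing $\chi\mapsto\bar\chi$. So it suffices to compute the spectrum of $L_{\alpha_S}$ on $\C[G]$.

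Next I decompose via Wedderburn/Artin–Maschke: $\C[G]\cong\bigoplus_{\chi\in\hat G} M_{\chi(1)}(\C)$, where the $\chi$-isotypic block is the two-sided ideal $\C[G]e_\chi$ with $e_\chi=\frac{\chi(1)}{|G|}\sum_g\overline{\chi(g)}g$, and $\dim \C[G]e_\chi=\chi(1)^2$. Because $\alpha_S$ is central (Lemma \ref{lem: classfunction} and the remark in the proof of Theorem \ref{thm: ASAT=ATAS}), Schur's lemma gives that $\rho_\chi(\alpha_S)$ is a scalar $c_\chi I_{\chi(1)}$ in each irreducible representation $\rho_\chi$. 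Taking traces, $\chi(1)c_\chi=\sum_{s\in S}\chi(s)=\chi(S)$, so $c_\chi=\chi(1)^{-1}\chi(S)$. Hence $L_{\alpha_S}$ acts on the ideal $\C[G]e_\chi$ (a block of dimension $\chi(1)^2$) as multiplication by $\lambda_\chi$. It follows that $A_S$ is diagonalizable, with eigenvalues $\lambda_\chi$ and eigenspace for a value $\lambda$ equal to $\bigoplus_{\lambda_\eta=\lambda}\C[G]e_\eta$. This gives the multiplicity $\sum_{\lambda_\eta=\lambda_\chi}\eta(1)^2$; the dimensions add to $|G|=\sum\eta(1)^2$, so nothing is missed.

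An alternative, more hands-on route would exhibit eigenvectors directly: the matrix coefficient functions $g\mapsto\rho_\chi(g)_{ab}$, which span a $\chi(1)^2$-dimensional space by Schur orthogonality. The main obstacle is only bookkeeping of left versus right multiplication and conjugation. I must check that $A_S$ corresponds to a multiplication by a central element, so that left or right does not matter, and that using $S^{-1}$ or $\bar\chi$ produces the same multiset $\{\lambda_\chi\}$ with the stated multiplicities. Centrality resolves the left/right issue. For the conjugation issue, I note that $\chi\mapsto\bar\chi$ is a bijection of $\hat G$ preserving degrees, and $\bar\chi(S^{-1})=\chi(S)$.
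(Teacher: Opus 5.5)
Your proof is correct. The paper gives no argument of its own here (it cites Theorem 1 of \cite{Z}), but your route is essentially the group-algebra argument the paper itself uses for the Cayley sum analogue, Proposition \ref{prop: spec XGS+}: the central element $\sum_{s\in S}s$, the decomposition of $\C[G]$ into minimal ideals on which that element acts by scalars, and a trace computation giving $\chi(S)/\chi(1)$ with block dimensions $\chi(1)^2$, minus the inversion map $\iota$ that the sum case needs.
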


\begin{proof}
See for instance Theorem 1 in \cite{Z}.
\end{proof}

For the sum graph case, we have obtained the following similar result, thus completing the picture.

\begin{prop} \label{prop: spec XGS+} 
Let $G$ be a finite group with $S$ a normal subset of $G$.
The eigenvalues of the Cayley sum graph $\G^+=X^+(G,S)$ are given by
	$$ {\rm Eig}(\G^+) = \{ \lambda_\chi = \tfrac{1}{\chi(1)}\chi(S)\}_{\chi \in \hat{G}_\R} \cup \{ \lambda_\chi^\pm = \pm \tfrac{1}{\chi(1)}\chi(S)\}_{\chi \notin \hat{G}_\R} \subset \R, $$ 	
where $\hat G$ is the set of all irreducible characters of $G$.
\end{prop}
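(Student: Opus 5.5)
The plan is to reduce the Cayley sum graph to the ordinary Cayley graph through a permutation matrix, and then use the isotypic decomposition of $\C[G]$. Let $P$ be the permutation matrix of inversion, $P_{g,h}=1$ iff $h=g^{-1}$, so that $P^2=I$. Then
$$(PA_S)_{g,h}=(A_S)_{g^{-1},h}=[\,hg\in S\,],$$
which is exactly the adjacency relation of $X^+(G,S)$. So I will first verify the factorization $A^+_S=PA_S$.

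Since $S$ is normal, $A^+_S$ is symmetric (see the comments after Lemma~\ref{lem: A y At}). Hence its eigenvalues are real and it is diagonalizable, so it suffices to find them on a suitable invariant decomposition.

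Next, decompose $\C^G\cong\C[G]=\bigoplus_{\chi\in\hat G}M_\chi$. Here $M_\chi$ is the span of the matrix coefficients of an irreducible representation affording $\chi$, and $\dim M_\chi=\chi(1)^2$. The matrix $A_S$ acts as convolution by the central element $\alpha_S$ (Lemma~\ref{lem: classfunction} and the proof of Theorem~\ref{thm: ASAT=ATAS}). By Schur's lemma, as in Proposition~\ref{prop: spec XGS}, it therefore acts on each $M_\chi$ as the scalar
$$\lambda_\chi=\chi(1)^{-1}\chi(S).$$
On the other hand, $P$ sends a function $f$ to $f(g^{-1})$. Matrix coefficients of $\rho$ go to matrix coefficients of the contragredient $\rho^*$, so $P(M_\chi)=M_{\bar\chi}$. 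Moreover $\lambda_{\bar\chi}=\overline{\lambda_\chi}$.

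With this I split into two cases.

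\emph{Case $\chi\in\hat G_\R$.} Here $M_\chi$ is $P$-invariant and $A^+_S|_{M_\chi}=\lambda_\chi P|_{M_\chi}$. Since $P$ is an involution, the eigenvalues on $M_\chi$ are $\pm\lambda_\chi$, which gives the eigenvalue $\lambda_\chi$ listed in the statement. I would remark that $-\lambda_\chi$ can also occur whenever $P|_{M_\chi}\neq I$. The exact multiplicities of $\pm\lambda_\chi$ come from the trace of $P$ on $M_\chi$, which Frobenius--Schur theory computes, but the statement only lists the eigenvalue set.

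\emph{Case $\chi\notin\hat G_\R$.} Consider the invariant subspace $M_\chi\oplus M_{\bar\chi}$. In block form,
$$A^+_S=\begin{pmatrix}0&\overline{\lambda_\chi}P\\ \lambda_\chi P&0\end{pmatrix}.$$
Its square is $|\lambda_\chi|^2 I$ and its trace is $0$. It follows that the eigenvalues are $\pm|\lambda_\chi|$, each with multiplicity $\chi(1)^2$.

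The main obstacle is bookkeeping rather than concept. First, one must check that the factorization is $PA_S$ and not $A_SP$; these agree anyway because normality of $S$ makes $A_S$ commute with $P$. Second, in the non-real case the correct value is the modulus $\pm\tfrac{1}{\chi(1)}|\chi(S)|$, consistent with Proposition~\ref{prop: spec XGS+ G abelian} and with the reality of the spectrum. I would therefore read the statement with the absolute value understood.
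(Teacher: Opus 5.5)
Your route is the paper's route, carried out correctly. The paper's operator $\rho_z=\mu_z\circ\iota$ (inversion followed by multiplication by the central element $z=\sum_{s\in S}s$), analysed on the simple two-sided ideals of $\C[G]$, is exactly your factorization $A^+_S=PA_S$ analysed on the blocks $M_\chi$. Your main computation is sound. What needs fixing is the relation you claim between its output and the statement, which is false as written in two places.

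The first you caught yourself. When inversion swaps $M_\chi$ and $M_{\bar\chi}$, the operator is $\left(\begin{smallmatrix}0&\overline{\lambda_\chi}P\\ \lambda_\chi P&0\end{smallmatrix}\right)$, whose eigenvalues are $\pm|\lambda_\chi|$. The paper's proof instead uses blocks $\left(\begin{smallmatrix}0&\lambda_j\\ \lambda_j&0\end{smallmatrix}\right)$, which drops the conjugation. It therefore contradicts its own claim that the spectrum is real (e.g.\ $G=\Z_3$, $S=\{1\}$).

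The second you noticed but dismissed, and this is the point to correct. For real $\chi$, the occurrence of $-\lambda_\chi$ is not a multiplicity matter: it enlarges the eigenvalue set. So the proposition is false as stated for non-abelian $G$. Your own tools give the precise answer. We have $\Tr(P|_{M_\chi})=\varepsilon(\chi)\chi(1)$ with $\varepsilon(\chi)=\frac{1}{|G|}\sum_{g\in G}\chi(g^2)=\pm1$. Hence $A^+_S|_{M_\chi}=\lambda_\chi P|_{M_\chi}$ has eigenvalues $\lambda_\chi$ and $-\lambda_\chi$ with multiplicities $\tfrac12\big(\chi(1)^2+\varepsilon(\chi)\chi(1)\big)$ and $\tfrac12\big(\chi(1)^2-\varepsilon(\chi)\chi(1)\big)$ respectively. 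Both occur as soon as $\chi(1)\ge2$.

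Concretely, take $G=\mathbb{S}_3$ and $S=\{c,c^2\}$ the set of $3$-cycles. Then $X^+(G,S)$ is the path $c\sim e\sim c^2$ with loops at $c$ and $c^2$, plus a triangle on the transpositions. Its spectrum is $\{[2]^2,[1]^1,[-1]^3\}$, whereas the statement predicts the set $\{2,-1\}$.

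The paper's proof has the matching error. It asserts that $\rho_z$ is the scalar $\lambda_i$ on the $\iota$-fixed blocks, although its own computation with $E_{k\ell}$, $k\neq\ell$, already produces $-\lambda_j$ there. So state what you actually proved:
\begin{itemize}
\item $\lambda_\chi$ for real $\chi$ with $\chi(1)=1$;
\item $\pm\lambda_\chi$ for real $\chi$ with $\chi(1)\ge2$;
\item $\pm|\lambda_\chi|$, each with multiplicity $\chi(1)^2$, for each pair $\{\chi,\bar\chi\}$ of non-real characters.
\end{itemize}
For abelian $G$ this is Proposition~\ref{prop: spec XGS+ G abelian}.

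One side remark of yours is wrong, though harmless: normality of $S$ does not make $P$ commute with $A_S$. In fact $PA_SP=A_{S^{-1}}=A_S^t$, so they commute only when $S$ is also symmetric; already $G=\Z_3$, $S=\{1\}$ is a counterexample. You verified $A^+_S=PA_S$ entrywise, and $A_SP=PA^+_SP$ is conjugate to it anyway. So nothing in your argument depends on that remark.
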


\begin{proof}
For clarity, we divide the proof into four parts. 	

$\bullet$ \textit{The element $z$ and its spectral decomposition}:
Let $\C[G]$ be the complex group algebra of $G$ and consider the element
	$$ z=\sum_{h \in S} h \in \C[G].$$
Since $S$ is normal, we have that
	$$ gzg^{-1} = g \Big( \sum_{h\in S} h \Big) g^{-1} = \sum_{h\in S} ghg^{-1} = \sum_{s\in S} s  = z $$ 
for all $g\in G$, 
which implies that $gz=zg$ for all $g\in G$. Since $G$ form a basis of $\C[G]$, then $z$ is in the center of the group algebra, i.e.\@ $z\in \mathcal{Z}(\C[G])$.

Now, consider the decomposition of $\C[G]$ into its minimal ideals, say 
\begin{equation} \label{eq: C[G] decomp}
	\C[G] = I_1 \oplus \cdots \oplus I_t,
\end{equation}
and let $e_i$ be the central idempotent of $I_i$, for all 
$i \in \{1,\dots,t\}$. 
Since $z \in \mathcal{Z}(\C[G])$, there exist $\lambda_1,\dots,\lambda_t \in \C$ such that
\begin{equation}\label{eq: z linear comb}
	z = \sum_{i=1}^t \lambda_i e_i.
\end{equation}

$\bullet$ \textit{Three endomorphisms of $\C[G]$}:
Define the endomorphism $\mu_{z} : \C[G] \rightarrow \C[G]$ on $G$ by 	
	$$ \mu_{z}(g) = gz $$
and extend it by linearity to $\C[G]$.	 
The restriction of $\mu_{z}$ to each ideal $I_j$ is $\lambda_j {\rm Id}_{\nu_j}$ where $\nu_j=\dim I_j$. 
In particular, we have that $\{\lambda_1,\dots,\lambda_t\}$ is the set of all different eigenvalues of $\mu_{z}$, that is 
	$$Spec(\mu_z)=\{ [\lambda_i]^{\nu_i}\}_{i=1}^t.$$

Let $\iota :\C[G]\rightarrow \C[G]$ be the linear extension of the inversion map of $G$, that is
	$$ \iota \big(\sum_{g\in G} a_g g\big) = \sum_{g\in G} a_g g^{-1}. $$  
Note that $\iota$ is a $\C$-algebra automorphism. 
Hence, it permutes the simple ideals in the decomposition \eqref{eq: C[G] decomp} of $\C[G]$. 
In particular, for each $j \in \{1,\dots,t\}$ there exists some $k \in \{1,\dots,t\}$ such that 
	$$ \iota(I_j) = I_k. $$ 
It may happen that $k=j$ (so that $I_j$ is fixed), or else that $\iota$ maps $I_j$ onto another block $I_k$.

Now, for each $a \in \C[G]$ we consider the vector space endomorphism $\rho_a : \C[G] \rightarrow \C[G]$ 
defined on $G$ by					
	$$\rho_a(g) := g^{-1}a, $$ 
and extended by linearity to $\C[G]$. 
Notice that 
$$ \rho_{z} = \mu_{z} \circ \iota.$$
Since $z$ acts as the scalar $\lambda_j$ on each simple block $I_j$, 
and the map $\iota$ corresponds to matrix transposition, we have that
	$$ \rho_{z} (E_{k\ell}) = \mu_z(\iota(E_{k \ell})) = \mu_z(E_{\ell k}) = \lambda_j \, E_{\ell k}, $$
where $\{E_{k \ell}\}$ are the elemental matrices with a $1$ in positions $k \ell$ and $\ell k$ and $0$'s in the remaining positions.
Hence, for $k \ne \ell$, the operator $\rho_z$ is represented 
in the basis $\{E_{k \ell}, E_{\ell k}\}$ by the matrix 
	$$ 
	\begin{pmatrix} 0 & \lambda_j \\ \lambda_j & 0 \end{pmatrix}, $$
with eigenvalues $\pm \lambda_j$, while for $k=\ell$ it acts as multiplication by $\lambda_j$. 
Therefore, reordering if necessary, the spectrum of $\rho_z$ is given by 
\begin{equation} \label{eq: spec}
	\big\{ [\lambda_i]^{\nu_i} \big\}_{i=1}^r \cup \big\{ [\lambda_j]^{\frac{\nu_j}2}, [-\lambda_j]^{\frac{\nu_j}2} \big\}_{j=r+1}^t,	
\end{equation}
where $1\le r \le t$.

$\bullet$ \textit{The adjacency operator}:
Now, let $\alpha$ be adjacency endomorphism $\alpha : \C[G] \rightarrow \C[G]$
defined on $G$ by
$$ \alpha(g) = \sum_{hg \in S} h$$
and extended to $\C[G]$ by linearity. 
Notice that the matrix $[\alpha]$ of $\alpha$ is exactly the adjacency matrix $A$ of the graph $X^+(G,S)$. In fact, if $G=\{g_1,\ldots,g_n\}$, we have that
	$$ \alpha(g_i) = \sum_{g_j g_i \in S} g_j$$ 
for $i=1,\ldots,n$ and thus, 
$$ [\alpha]_{ij} = A_{ij} = \begin{cases}
	1 & \quad \text{if } g_j g_i \in S, \\[1mm]
	0 & \quad \text{if } g_j g_i \notin S,
\end{cases}$$
for any $1\le i,j \le n$.

On the other hand, since $\alpha(1)=z$, for each $g\in G$ we have that 
	$$  \alpha(g) = \sum_{hg \in S} h = \sum_{h \in S} hg^{-1} = \sum_{h \in S} g^{-1}h = g^{-1} \, \alpha(1) = g^{-1} {z} = \rho_{z}(g). $$ 
Thus, we obtain that $\alpha=\rho_z$ and hence $Spec(\alpha)=Spec(\rho_z)$, i.e. as in \eqref{eq: spec}.

$\bullet$ \textit{Expression for the eigenvalues}:
Now we show that the eigenvalues $\lambda_1, \ldots, \lambda_t$ has the expression as in the statement. For each $i\in\{1,\dots,t\}$, let 
$$ \phi_i = {\rho_z}_{|I_i}, $$ 
which is an irreducible representation of $G$ (by the minimality of $I_i$) and let $\chi_i$ be the character of $\phi_i$, that is 
	$$\chi_i(g)=\Tr(\phi_i(g)).$$
For each fixed $j \in \{ 1,\dots,t \}$, using the definition of $z$, we have
$$  \sum_{s\in S} \phi_j(s) = \phi_j \Big( \sum_{s\in S} s \Big) = \phi_j(z)$$
and, by \eqref{eq: z linear comb}, we have
$$ \phi_j(z) = \phi_j \Big( \sum_{i=1}^t \lambda_i e_i \Big) = \sum_{i=1}^t \lambda_i\phi_j(e_i).$$	
Putting together the above expressions and taking traces we obtain that
$$\sum_{s\in S}\chi_j(s) = \lambda_j \chi_j(1),$$
from where we finally get 
$\lambda_j = \frac{1}{\chi_j(1)} \chi_j(S)$.

Finally, recall that ${\rho_z}_{|I_i} = \lambda_i Id_{\nu_i}$ for $i=1,\ldots,r$ and 
	$$ {\rho_z}_{|I_j} = diag \left( \Big( \begin{smallmatrix} 0 & \lambda_j \\ \lambda_j & 0 \end{smallmatrix} \Big), \ldots, \Big( \begin{smallmatrix} 0 & \lambda_j \\ \lambda_j & 0 \end{smallmatrix} \Big) \right), \qquad \tfrac{\nu_j}2\text{-times}.$$
Therefore, $\chi_1, \ldots, \chi_r$ are the real characters and $\chi_{r+1}, \ldots, \chi_t$ are the non-real characters of $G$. This finish the proof. 	
\end{proof}

\begin{rem}
The previous result generalizes Theorem 3.1 in \cite{Bis} (which assumes $S$ symmetric), to the case when $S$ is not necessarily symmetric. 
The given proof is different than the one in \cite{Bis}, and uses the same ideas as in the proof of Theorem \ref{thm: ASAT=ATAS}.
\end{rem}

\begin{rem}
When $G$ is abelian and $S$ is symmetric, the expressions in Propositions~\ref{prop: spec XGS G abelian} and \ref{prop: spec XGS} coincide, as well as Propositions \ref{prop: spec XGS+ G abelian} and \ref{prop: spec XGS+}, as it should be. In fact, if $G$ is abelian $\chi(1)=1$ and if $S$ is symmetric $\chi(S) \in \R$.
\end{rem}

\section{The spectrum of di-Cayley (sum) graphs} \label{sec: spectrum of dicayleys} 
We now extend the approach used in the previous section to compute the eigenvalues of bi/di-Cayley (sum) graphs, using irreducible characters of the group $G$.

\subsubsection*{Spectrum of di-Cayley graphs in terms of the spectrum of the Cayley covers}
It happens that the spectrum of the bi/di-Cayely (sum) graphs can be given in terms of the corresponding one of the Cayley (sum) covers.

The following result was previously proved in \cite{GL} (see Theorem 3.2) for bi-Cayley graphs in the particular case of $G$ abelian and $S_\ell$, $S_r$ symmetric subsets of $G$. 
We now generalize this result to arbitrary bi/di-Cayley graphs for any $G$ in the case when the sets $S_\ell$, $S_r$, $S_m$ are normal (not requiring that they are symmetric sets).

\begin{thm} \label{thm: Spec DX}
Let $G$ be a finite group and $S_\ell$, $S_r$, $S_m$ normal (automatic if $G$ is abelian) subsets of $G$. 
Then, the eigenvalues of the bi-Cayley graph $BX(G;S_\ell,S_r,S_m)$ are 
\begin{equation} \label{eq: autovalores BX}
	\lambda_{BX}^{\pm} (\chi) = \tfrac{ \lambda_{S_\ell,\chi} + \lambda_{S_r,\chi}}{2} \pm 
	\sqrt{ \Big(\tfrac{\lambda_{S_\ell,\chi} - \lambda_{S_r,\chi}}{2} \Big)^2 + |\lambda_{S_m,\chi}|^2 },
\end{equation}	
and the eigenvalues of the di-Cayley graph $DX(G;S_\ell,S_r,S_m)$ are given by 
\begin{equation} \label{eq: autovalores DX}
	\lambda_{DX}^{\pm} (\chi) = \tfrac{\lambda_{S_\ell,\chi} + \lambda_{S_r,\chi}}{2} \pm 
	\sqrt{ \Big( \tfrac{\lambda_{S_\ell, \chi} - \lambda_{S_r,\chi}}{2} \Big)^2 + (\lambda_{S_m,\chi})^2},
\end{equation}	
where $\lambda_{S_\ell,\chi}$, $\lambda_{S_r,\chi}$ and $\lambda_{S_m,\chi}$ are the eigenvalues of $X(G,S_\ell)$, $X(G,S_r)$ and $X(G,S_m)$ respectively and $\chi \in \hat G$ runs over the set of all irreducible characters of $G$.
\end{thm}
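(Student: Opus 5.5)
The plan is to diagonalize the three blocks of the adjacency matrices in Proposition~\ref{prop: AdjMat} simultaneously, and then reduce the $2n\times 2n$ eigenvalue problem to a family of $2\times 2$ problems, one for each $\chi\in\hat G$.

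\textbf{Step 1: simultaneous decomposition.} Write $A_\ell, A_r, A_m$ for the adjacency matrices of $X(G,S_\ell)$, $X(G,S_r)$, $X(G,S_m)$. As in the proof of Theorem~\ref{thm: ASAT=ATAS}, I would identify these matrices (up to the harmless transpose coming from left versus right multiplication) with multiplication by the central elements $\alpha_{S_\ell},\alpha_{S_r},\alpha_{S_m}\in\mathcal Z(\C[G])$ acting on $\C[G]$. Lemma~\ref{lem: classfunction} guarantees centrality. Consider the Wedderburn decomposition $\C[G]=I_1\oplus\cdots\oplus I_t$ of \eqref{eq: C[G] decomp}, where the summand $I_\chi$ has dimension $\chi(1)^2$. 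By the argument in the proof of Proposition~\ref{prop: spec XGS+}, each central element acts on $I_\chi$ as a scalar, namely $\lambda_{S,\chi}=\chi(S)/\chi(1)$ (Proposition~\ref{prop: spec XGS}). Since $S^{-1}$ is again normal, $A_m^t=A_{S_m^{-1}}$ (Lemma~\ref{lem: A y At}) also acts on $I_\chi$ as a scalar. That scalar is $\chi(S_m^{-1})/\chi(1)=\overline{\chi(S_m)}/\chi(1)=\overline{\lambda_{S_m,\chi}}$. In particular, all of $A_\ell,A_r,A_m,A_m^t$ commute (Theorem~\ref{thm: ASAT=ATAS}) and preserve every $I_\chi$.

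\textbf{Step 2: reduction to $2\times2$ blocks.} The space $\C[G]\oplus\C[G]$ splits as $\bigoplus_\chi (I_\chi\oplus I_\chi)$, and both $A_{BX}$ and $A_{DX}$ preserve each summand. On $I_\chi\oplus I_\chi\cong I_\chi\otimes\C^2$ they act as $\mathrm{Id}_{I_\chi}\otimes M_\chi$, where
$$ M_\chi^{BX}=\begin{pmatrix}\lambda_{S_\ell,\chi} & \lambda_{S_m,\chi}\\ \overline{\lambda_{S_m,\chi}} & \lambda_{S_r,\chi}\end{pmatrix},\qquad M_\chi^{DX}=\begin{pmatrix}\lambda_{S_\ell,\chi} & \lambda_{S_m,\chi}\\ \lambda_{S_m,\chi} & \lambda_{S_r,\chi}\end{pmatrix}. $$
The eigenvalues of a $2\times2$ matrix $\begin{pmatrix}a&b\\c&d\end{pmatrix}$ are $\tfrac{a+d}{2}\pm\sqrt{(\tfrac{a-d}{2})^2+bc}$. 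Taking $bc=|\lambda_{S_m,\chi}|^2$ in the bi-Cayley case and $bc=\lambda_{S_m,\chi}^2$ in the di-Cayley case gives \eqref{eq: autovalores BX} and \eqref{eq: autovalores DX}. Each eigenvalue occurs with multiplicity $\chi(1)^2$, and summing over $\chi$ accounts for $2\sum_\chi\chi(1)^2=2|G|$ eigenvalues. This shows the list is the full spectrum.

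\textbf{Main obstacle.} The delicate point is Step 1: justifying that the adjacency matrices (rather than left-regular operators) act as the scalar $\lambda_{S,\chi}$ on a common decomposition. I would also need to check that the transpose block in $A_{BX}$ produces the complex conjugate $\overline{\lambda_{S_m,\chi}}$. I would handle both carefully through the identity $A_S g=\sum_{s\in S} sg$, and use centrality to rewrite this as right multiplication by $\alpha_S$. The blocks may not be diagonalizable individually when $S$ is not symmetric, but they are normal by Remark~\ref{rem: BBt=BtB}. In any case the scalar action on $I_\chi$ makes this irrelevant: for a non-diagonalizable $M_\chi$ one still gets the characteristic polynomial $\prod_\chi \det(xI-M_\chi)^{\chi(1)^2}$, which is all the statement requires.
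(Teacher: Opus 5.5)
Your proof is correct and reaches the same $2\times 2$ reduction as the paper, but by a different mechanism.

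\textbf{The two routes.} The paper argues purely linear-algebraically. By Theorem~\ref{thm: ASAT=ATAS} and Remark~\ref{rem: BBt=BtB}, the matrices $A_{X_\ell},A_{X_r},A_{X_m},A_{X_m}^t$ are commuting normal matrices, hence simultaneously unitarily diagonalizable. The conjugate in the lower-left block comes from $U^*A^tU=\overline{U^*AU}$ for real $A$, and a permutation then produces the blocks $B_k$. You instead use the isotypic decomposition $\C[G]=\bigoplus_\chi I_\chi$, on which every central $\alpha_S$ acts as the scalar $\chi(S)/\chi(1)$. You get the conjugate from $A_{S_m}^t=A_{S_m^{-1}}$ and $\chi(g^{-1})=\overline{\chi(g)}$.

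\textbf{What your route buys.} First, it justifies a step the paper only asserts. On each common eigenvector, the eigenvalues of the three blocks must be $\lambda_{S_\ell,\chi},\lambda_{S_r,\chi},\lambda_{S_m,\chi}$ for one and the same $\chi$. Simultaneous diagonalization alone only says that each $\Delta_{X_i}$ lists the spectrum of $A_{X_i}$ in some order. It is precisely this pairing that determines the blocks $B_k$, and establishing it needs essentially the isotypic fact you use. Second, your route gives the multiplicity $\chi(1)^2$ directly. Third, it needs neither normality nor diagonalizability, since it yields the characteristic polynomial $\prod_\chi\det(xI-M_\chi)^{\chi(1)^2}$ outright. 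This matters because $M_\chi^{DX}$ is only complex symmetric and can fail to be diagonalizable. For example, $G=\Z_4$, $S_\ell=\{0\}$, $S_r=\{2\}$, $S_m=\{1\}$ and $\chi(x)=i^x$ give the nonzero nilpotent matrix $M_\chi^{DX}=\begin{pmatrix}1&i\\ i&-1\end{pmatrix}$. The paper's route is shorter but leans on the normality of the blocks.

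\textbf{Two minor remarks.}
\begin{enumerate}
\item The individual blocks $A_{X_i}$ are always diagonalizable, being normal, so that worry in your last paragraph is unfounded (though harmless to your argument).
\item On the convention: with the paper's $(A_S)_{ij}=1\Leftrightarrow g_jg_i^{-1}\in S$, the formula $A_Sg=\sum_s sg$ describes $A_S$ acting on row vectors. On column vectors, $A_S$ is multiplication by $\alpha_{S^{-1}}$ and acts on $I_\chi$ by $\overline{\lambda_{S,\chi}}$. If you use one convention for all four blocks, the column version just replaces $M_\chi$ by $\overline{M_\chi}=M_{\overline{\chi}}$, which leaves the multiset of eigenvalues unchanged.
\end{enumerate}
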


\begin{proof} 
By Proposition \ref{prop: AdjMat}, we have that the adjacency matrix of the bi-Cayley graph $BX(G;S_\ell,S_r,S_m)$ is given by
	$$	
	A_{BX}= \begin{pmatrix}
		A_{X_\ell} & A_{X_m} \\[1.5mm]
		(A_{X_m})^t & A_{X_r} \end{pmatrix}.
	$$
Since $S_\ell, S_r, S_m$ are normal subsets, by Theorem \ref{thm: ASAT=ATAS} and Remark \ref{rem: BBt=BtB}, the matrices $A_{X_\ell}$, $A_{X_r}$, $A_{X_m}$ and $A_{X_m}^t$ are mutually commuting normal matrices. Hence, they diagonalize simultaneously, and $A_{BX}$ is equivalent to the block matrix 
	$$ 
	\Delta_{BX} = \begin{pmatrix}
		\Delta_{X_\ell} & \Delta_{X_m} \\[1.5mm]
		\overline{\Delta_{X_m}} & \Delta_{X_r}\end{pmatrix},
	$$
where each diagonal matrix $\Delta_{X_i}$ contains the eigenvalues of the respective Cayley graph $X(G,S_i)$. By Propositions \ref{prop: spec XGS G abelian} and \ref{prop: spec XGS}, these eigenvalues are denoted by $\lambda_{S_i,\chi_k}$ for $k=1,\dots,n$ (accounting for multiplicities). Thus,
	$$ 
	\Delta_{X_i} = \mathrm{diag}(\lambda_{S_i,\chi_1}, \lambda_{S_i,\chi_2}, \dots, \lambda_{S_i,\chi_n})
	$$
for $i \in \{\ell,r,m\}$. Furthermore, by applying a suitable permutation of rows and columns, the matrix $\Delta_{BX}$ is equivalent to the block diagonal matrix
	$$ \mathrm{diag}(B_1, B_2, \dots, B_n) $$
where each $2 \times 2$ block is given by
	$$ B_k = \begin{pmatrix}
		\lambda_{S_\ell, \chi_k} & \lambda_{S_m, \chi_k} \\[1.5mm]
		\overline{\lambda_{S_m, \chi_k}} & \lambda_{S_r, \chi_k}
	\end{pmatrix}. $$
Then, the eigenvalues of the bi-Cayley graph $BX(G;S_\ell,S_r,S_m)$ are precisely the eigenvalues of these $2 \times 2$ blocks, which are easily computed as:
	\begin{equation} \label{eq: autovalores BX*}
		\lambda_{BX}^{\pm} (\chi_i) = \tfrac{ \lambda_{S_\ell,\chi_i} + \lambda_{S_r,\chi_i}}{2} \pm 
		\sqrt{ \Big(\tfrac{\lambda_{S_\ell,\chi_i} - \lambda_{S_r,\chi_i}}{2} \Big)^2 + |\lambda_{S_m,\chi_i}|^2 },
	\end{equation}
for $i=1,\dots,n$.
	
The proof of \eqref{eq: autovalores DX} is completely analogous, using the adjacency matrix 
	$$ A_{DX} = \begin{pmatrix} A_{X_\ell} & A_{X_m} \\	A_{X_m} & A_{X_r} \end{pmatrix} $$ 
of the di-Cayley graph $DX(G;S_\ell,S_r,S_m)$. In this case, the permutation of rows and columns yields $2 \times 2$ blocks of the form
	$$ 
	B_k = \begin{pmatrix}
		\lambda_{S_\ell, \chi_k} & \lambda_{S_m, \chi_k} \\[1mm] \lambda_{S_m, \chi_k} & \lambda_{S_r, \chi_k} \end{pmatrix},
	$$
which directly leads to expression \eqref{eq: autovalores DX}, as desired.
\end{proof}

Now, we  give the eigenvalues of bi/di-Cayley sum graphs $X^+(G;S_\ell, S_r, S_m)$ for any $G$ in the case when the sets $S_\ell$, $S_r$ and $S_m$ are normal. 

\begin{thm} \label{thm: Spec DX+}
Let $G$ be a finite group and $S_\ell$, $S_r$, $S_m$ normal subsets of $G$. 
Then, the eigenvalues of the bi-Cayley sum graph $BX^+(G;S_\ell,S_r,S_m)$ are given by 
\begin{equation} \label{eq: autovalores BX+}
	\lambda_{BX^+}^{\pm} (\chi) = \tfrac{ \lambda_{S_\ell,\chi}^+ + \lambda_{S_r,\chi}^+}{2} \pm 
	\sqrt{ \Big(\tfrac{\lambda_{S_\ell,\chi}^+ - \lambda_{S_r,\chi}^+}{2} \Big)^2 + |\lambda_{S_m,\chi}^+|^2 },
\end{equation}	
and the eigenvalues of the di-Cayley graph $DX^+(G;S_\ell,S_r,S_m)$ are given by 
\begin{equation} \label{eq: autovalores DX+}
	\lambda_{DX^+}^{\pm} (\chi) = \tfrac{\lambda_{S_\ell,\chi}^+ + \lambda_{S_r,\chi}^+}{2} \pm 
	\sqrt{ \Big( \tfrac{\lambda_{S_\ell, \chi}^+ - \lambda_{S_r,\chi}^+}{2} \Big)^2 + (\lambda_{S_m,\chi}^+)^2}, 
\end{equation}	
where $\lambda_{S_\ell,\chi}^+$, $\lambda_{S_r,\chi}^+$ and $\lambda_{S_m,\chi}^+$ are the eigenvalues of $X^+(G,S_\ell)$, $X^+(G,S_r)$ and $X^+(G,S_m)$ respectively and $\chi \in \hat G$ runs over the set of all irreducible characters of $G$.
\end{thm}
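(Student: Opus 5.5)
We will follow the proof of Theorem~\ref{thm: Spec DX}, replacing Cayley graphs by Cayley sum graphs. By Proposition~\ref{prop: AdjMat},
$$ A_{BX^+} = \begin{pmatrix} A^+_{X_\ell} & A^+_{X_m} \\ (A^+_{X_m})^t & A^+_{X_r} \end{pmatrix}, \qquad A_{DX^+} = \begin{pmatrix} A^+_{X_\ell} & A^+_{X_m} \\ A^+_{X_m} & A^+_{X_r} \end{pmatrix}. $$
Since $S_\ell, S_r, S_m$ are normal, each $X^+(G,S)$ is undirected, as recalled in Section~\ref{sec: adj mat}. So every block $A^+_{X_i}$ is a real symmetric matrix, and in particular $(A^+_{X_m})^t = A^+_{X_m}$. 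By Theorem~\ref{thm: ASAT=ATAS} the three blocks commute pairwise, and they are symmetric, hence normal. They therefore admit a common orthonormal eigenbasis $\{w_1,\dots,w_n\}$ of $\C^n$.

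Next we conjugate $A_{BX^+}$ and $A_{DX^+}$ by $\mathrm{diag}(W,W)$, where $W$ has columns $w_k$. This turns each block into a diagonal matrix $\Delta_{X_i}$. After permuting rows and columns, both matrices split into $2\times 2$ blocks
$$ B_k = \begin{pmatrix} \lambda^+_{S_\ell,k} & \lambda^+_{S_m,k} \\ \lambda^+_{S_m,k} & \lambda^+_{S_r,k} \end{pmatrix}, \qquad k=1,\dots,n. $$
Here $\lambda^+_{S_i,k}$ is the eigenvalue of $A^+_{X_i}$ on $w_k$. In the bi-Cayley case the off-diagonal entry is formally $\overline{\lambda^+_{S_m,k}}$. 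Since the eigenvalue is real, $|\lambda^+_{S_m,k}|^2 = (\lambda^+_{S_m,k})^2$, which is why the two formulas \eqref{eq: autovalores BX+} and \eqref{eq: autovalores DX+} agree here. The characteristic polynomial of $B_k$ is a quadratic, and its roots are exactly the displayed expressions.

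The main obstacle is to show that the joint eigenvalues $(\lambda^+_{S_\ell,k},\lambda^+_{S_r,k},\lambda^+_{S_m,k})$ are indexed coherently by characters $\chi$. Proposition~\ref{prop: spec XGS+} says the spectrum of each cover is $\{\tfrac{1}{\chi(1)}\chi(S)\}$ for real $\chi$ and $\{\pm\tfrac{1}{\chi(1)}\chi(S)\}$ otherwise. We need the same $\chi$ and the same sign on a common eigenvector for all three sets. For this we reuse the proof of Proposition~\ref{prop: spec XGS+}. There $A^+_{X_i} = \rho_{z_i} = \mu_{z_i}\circ\iota$ with $z_i = \sum_{s\in S_i} s$ central. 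Each $z_i$ acts on the simple ideal $I_j$ by the scalar $\lambda_{i,j} = \chi_j(S_i)/\chi_j(1)$. On a $\iota$-stable block the decomposition into $\iota$-eigenvectors ($\pm1$) does not depend on $i$. On a pair of blocks swapped by $\iota$, the vectors $x \pm \iota(x)$ with $x\in I_j$ are common eigenvectors. Note that $\chi_k(S_i) = \chi_j(S_i^{-1})$, which equals $\chi_j(S_i)$ in the real (symmetric) situation. So the three covers are diagonalized together with eigenvalues $\epsilon\,\lambda_{i,j}$, where $\epsilon\in\{\pm1\}$ is the same for all $i$.

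I expect this sign and coherence bookkeeping, including how characters $\chi$ with $\chi\neq\bar\chi$ are treated, to be the delicate point. Once the coherent indexing is in place, reading off the eigenvalues of $B_k$ gives the formulas as $\chi$ runs over $\hat G$, with the appropriate signs and multiplicities.
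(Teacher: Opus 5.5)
\paragraph{Where the argument breaks.} You follow the paper's route (its proof of this theorem only says it is ``completely analogous'' to Theorem~\ref{thm: Spec DX}). The argument fails at the same place as the paper's: the claim that $A^+_{X_\ell},A^+_{X_r},A^+_{X_m}$ commute pairwise. Theorem~\ref{thm: ASAT=ATAS} holds for Cayley graphs, but not for Cayley sum graphs. For normal $S,T$ one computes
\[
(A^+_SA^+_T)_{ij}=\#\{(s,t)\in S\times T: s^{-1}t=g_i^{-1}g_j\},\qquad
(A^+_TA^+_S)_{ij}=\#\{(s,t)\in S\times T: t^{-1}s=g_i^{-1}g_j\}.
\]
These agree only if the multiset $S^{-1}T$ is closed under inversion. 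That is automatic when $S,T$ are symmetric, but not in general.

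Your own $\iota$-analysis exposes the problem. Take $\chi\neq\bar\chi$ and $x\in I_\chi$, and put $\lambda_i=\chi(S_i)/\chi(1)$. Then $\rho_{z_i}(x)=\iota(x)z_i=\bar\lambda_i\,\iota(x)$ and $\rho_{z_i}(\iota(x))=\lambda_i x$. So on $\mathrm{span}\{x,\iota(x)\}$ the operator $A^+_{X_i}$ acts by $\left(\begin{smallmatrix}0&\lambda_i\\ \bar\lambda_i&0\end{smallmatrix}\right)$. Consequently $x\pm\iota(x)$ are eigenvectors only when $\lambda_i\in\R$, and two covers commute on this plane only when $\lambda_i\bar\lambda_{i'}\in\R$. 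For a normal set $S$, $\chi(S)$ is real for every $\chi$ exactly when $S=S^{-1}$. So the coherence step you postpone is not bookkeeping: under the hypothesis ``normal'' alone it cannot be carried out. Your treatment of self-conjugate blocks via $\iota$-eigenvectors is fine.

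\paragraph{The statement itself fails without symmetry.} Take $G=\Z_3$, $S_\ell=S_r=\{0\}$, $S_m=\{1\}$.
\begin{itemize}
\item $A^+_{\{0\}}$ and $A^+_{\{1\}}$ are the permutation matrices of $u\mapsto -u$ and $u\mapsto 1-u$. Their products in the two orders are the translations by $+1$ and by $-1$, so they do not commute.
\item The graph $DX^+(\Z_3;\{0\},\{0\},\{1\})=BX^+(\Z_3;\{0\},\{0\},\{1\})$ is the path $(0,0),(1,1),(2,1),(2,0),(1,0),(0,1)$ with a loop at each end. Its spectrum is $\{2,\pm\sqrt3,\pm1,0\}$.
\item $X^+(\Z_3,\{0\})$ and $X^+(\Z_3,\{1\})$ both have spectrum $\{1,1,-1\}$. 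For any choice of cover eigenvalues in $\{\pm1\}$, the expressions in \eqref{eq: autovalores DX+} lie in $\{0,\pm\sqrt2,\pm2\}$, so they never produce $\pm1$ or $\pm\sqrt3$.
\end{itemize}
What is true in general is the following. Set $\Lambda_\chi=\frac{1}{\chi(1)}\left(\begin{smallmatrix}\chi(S_\ell)&\chi(S_m)\\ \chi(S_m)&\chi(S_r)\end{smallmatrix}\right)$. On $\mathrm{span}\{(x,0),(0,x),(\iota(x),0),(0,\iota(x))\}$, the matrix $A_{DX^+}$ acts by the Hermitian matrix $\left(\begin{smallmatrix}0&\Lambda_\chi\\ \Lambda_\chi^{*}&0\end{smallmatrix}\right)$. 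Hence each pair $\{\chi,\bar\chi\}$ contributes $\pm$ the singular values of $\Lambda_\chi$. In the example, with $\omega=e^{2\pi i/3}$, this gives $|1\pm\omega|\in\{1,\sqrt3\}$.

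\paragraph{What your argument does prove.} Once completed, your argument proves the theorem under the extra hypothesis that $S_\ell,S_r,S_m$ are also symmetric, which is the hypothesis stated in the paper's introduction. Then all $\chi(S_i)$ are real and the sum matrices commute. The $\iota$-eigenvectors in self-conjugate blocks and the vectors $x\pm\iota(x)$ in swapped pairs are common eigenvectors with a common sign $\epsilon=\pm1$. Each $2\times 2$ block is then $\epsilon\Lambda_\chi$.
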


\begin{proof}
The proof is completely analogous to the one of Theorem \ref{thm: Spec DX}.
\end{proof}

We now make some observations on the eigenvalues and on isospectrality.

\begin{rem}
Observe that if $S_m$ is symmetric, by ($iii$) in Remark \ref{rem: cosas1} we have the di-Cayley graph $DX^*(G;S_\ell,S_r,S_m)$ is the bi-Cayley graph $BX^*(G;S_\ell,S_r,S_m)$, and also $\lambda_{S_m,\chi}^* \in \R$ for any $\chi \in \hat G$, and hence expressions \eqref{eq: autovalores BX} and \eqref{eq: autovalores DX} and also \eqref{eq: autovalores BX+} and \eqref{eq: autovalores DX+} coincide, that is 
	$$ \lambda_{BX^*}^{\pm} (\chi) = \lambda_{DX^*}^{\pm} (\chi), $$ 
as it should be. 
\end{rem}

\begin{rem} We now make some comments on isospectrality.
	
\noindent ($a$) 
Notice that if $X^*(G,S)$ is isospectral to $X^*(G,S')$ for all the pairs $(S,S') \in \{(S_\ell, S_\ell'), (S_r,S_r'), (S_m,S_m')\}$, then $X^*(G;S_\ell, S_r, S_m)$ is isospectral to $X^*(G;S_\ell', S_r', S_m')$, by expressions \eqref{eq: autovalores BX}--\eqref{eq: autovalores DX} and \eqref{eq: autovalores BX+}--\eqref{eq: autovalores DX+} in Theorems \ref{thm: Spec DX}--\ref{thm: Spec DX+}.
	
\noindent ($b$) 
If $X(G,S)$ is isospectral to $X^+(G,S)$ for $S \in \{S_\ell, S_r, S_m\}$, then $X(G;S_\ell, S_r, S_m)$ is isospectral to $X^+(G;S_\ell, S_r, S_m)$, by expressions 
\eqref{eq: autovalores BX}--\eqref{eq: autovalores DX} and \eqref{eq: autovalores BX+}--\eqref{eq: autovalores DX+} in \linebreak Theorems \ref{thm: Spec DX} and \ref{thm: Spec DX+}.
By Proposition 2.5 and Corollary 2.9 in \cite{PV}, $X(G,S)$ is isospectral to $X^+(G,S)$ if $X(G,S)$, and hence $X^+(G,S)$, has symmetric spectrum. 
\end{rem}

\subsubsection*{Spectrum of bi/di-Cayley (sum) graphs in terms of the characters of $G$}
Putting together the previous results in the section, 
we get expressions for the eigenvalues of the bi/di-Cayley (sum) graphs only in terms of the characters of $G$ evaluated in the connection sets, i.e.\@ in terms of $\chi(S_\ell)$, $\chi(S_r)$ and $\chi(S_m)$ for any $\chi \in \hat G$.

For the bi/di-Cayely graphs we have the following.

\begin{coro} \label{coro: spec with chi(G)}
Let $G$ be a finite group and $S_\ell$, $S_r$, $S_m$ normal subsets of $G$. 
Then, the eigenvalues of the bi-Cayley graph $BX(G;S_\ell,S_r,S_m)$ are given by 
\begin{equation} \label{eq: autovalores BX chars}
	\lambda_{BX}^{\pm} (\chi) = \tfrac{1}{\chi(1)} \Bigg \{ \tfrac{ \chi(S_\ell) + \chi(S_r)}{2} \pm 
	\sqrt{ \Big(\tfrac{\chi(S_\ell) - \chi(S_r)}{2} \Big)^2 + |\chi(S_m)|^2 } \Bigg \},
\end{equation}	
and the eigenvalues of the di-Cayley graph $DX(G;S_\ell,S_r,S_m)$ are given by 
\begin{equation} \label{eq: autovalores DX chars}
	\lambda_{DX}^{\pm} (\chi) = \tfrac{1}{\chi(1)} \Bigg \{ \tfrac{ \chi(S_\ell) + \chi(S_r)}{2} \pm 
	\sqrt{ \Big(\tfrac{\chi(S_\ell) - \chi(S_r)}{2} \Big)^2 + \chi(S_m)^2 } \Bigg \},
\end{equation}	
where $\chi \in \hat G$ runs over the set of all irreducible characters of $G$.
\end{coro}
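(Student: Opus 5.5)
The plan is to substitute the character formula of Proposition~\ref{prop: spec XGS} into the expressions \eqref{eq: autovalores BX} and \eqref{eq: autovalores DX} of Theorem~\ref{thm: Spec DX}. Since $S_\ell$, $S_r$, $S_m$ are normal, Proposition~\ref{prop: spec XGS} gives, for every $\chi\in\hat G$,
$$ \lambda_{S_\ell,\chi} = \tfrac{1}{\chi(1)}\chi(S_\ell), \qquad \lambda_{S_r,\chi} = \tfrac{1}{\chi(1)}\chi(S_r), \qquad \lambda_{S_m,\chi} = \tfrac{1}{\chi(1)}\chi(S_m). $$
The key point to check first is that these three eigenvalues are indexed by the \emph{same} character $\chi$. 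In the proof of Theorem~\ref{thm: Spec DX} the simultaneous diagonalization comes from the common isotypic decomposition. Concretely, each $A_{X_i}$ is the matrix of multiplication by the central element $\alpha_{S_i}$ (or its image under inversion), which acts on the block $I_j$ of $\C[G]$ by the scalar $\chi_j(S_i)/\chi_j(1)$. This common eigenspace decomposition $\C[G]=\bigoplus_j I_j$ is exactly what justifies pairing the eigenvalues in the $2\times 2$ blocks $B_k$ by the same $\chi_k$. I would state this explicitly, running over each $\chi$ with multiplicity $\chi(1)^2$.

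Next I would factor out $1/\chi(1)$ algebraically. Since $\chi(1)>0$ is real, $|\lambda_{S_m,\chi}|^2 = |\chi(S_m)|^2/\chi(1)^2$ and $(\lambda_{S_m,\chi})^2=\chi(S_m)^2/\chi(1)^2$. Hence the radicand equals $\chi(1)^{-2}$ times the corresponding radicand in $\chi(S_\ell),\chi(S_r),\chi(S_m)$. Taking square roots gives $\sqrt{\chi(1)^{-2}w}=\chi(1)^{-1}\sqrt{w}$ for the same choice of branch. The $\pm$ sign absorbs any branch ambiguity for complex $w$, so the pair $\{\lambda^+,\lambda^-\}$ is unchanged. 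Pulling $1/\chi(1)$ out of the whole expression then yields \eqref{eq: autovalores BX chars} and \eqref{eq: autovalores DX chars}.

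The main obstacle is the bookkeeping in the first step. The paper's proof of Theorem~\ref{thm: Spec DX} orders the eigenvalues as $\chi_1,\dots,\chi_n$ ``accounting for multiplicities'', and one must make sure the ordering is consistent across the three matrices. I would handle this by invoking the central-idempotent description from the proof of Proposition~\ref{prop: spec XGS+} (the non-sum version is identical, using $\mu_z$). In the bi-Cayley case I would also note that $A_{X_m}^t=A_{S_m^{-1}}$ acts on $I_j$ by $\chi_j(S_m^{-1})/\chi_j(1)=\overline{\chi_j(S_m)}/\chi_j(1)$. This is consistent with the block $B_k$ and with the appearance of $|\chi(S_m)|^2$.
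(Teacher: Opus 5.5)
Your proposal is correct and follows the paper's proof, which simply substitutes $\lambda_{S_i,\chi}=\chi(S_i)/\chi(1)$ from Proposition~\ref{prop: spec XGS} into Theorem~\ref{thm: Spec DX} and factors $1/\chi(1)$ out of the square root. You also check that $A_{X_\ell}$, $A_{X_r}$, $A_{X_m}$ and $A_{X_m}^t$ act by scalars on the same isotypic blocks $I_j$ of $\C[G]$, so the entries of each $2\times 2$ block are indexed by one and the same $\chi$; this makes explicit a pairing the paper leaves implicit (depending on the matrix convention, this may require a harmless global relabeling $\chi\mapsto\bar\chi$).
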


\begin{proof}
The result is obtained by introducing the expressions of the eigenvalues of Cayley graphs of Proposition \ref{prop: spec XGS}  inside the expressions for the eigenvalues of the bi/di-Cayley graphs in Theorem \ref{thm: Spec DX}.
\end{proof}

And for the bi/di-Cayley sum graphs we get the  following. 
\begin{coro} \label{coro: spec with chi(G)+}
Let $G$ be a finite group and $S_\ell$, $S_r$, $S_m$ normal subsets of $G$. 

\noindent $(a)$ 
The eigenvalues of the bi-Cayley sum graph $BX^+(G;S_\ell,S_r,S_m)$ are given as follows: 
\begin{itemize}
	\item If $\chi \in \hat{G}_{\R}$, then 

\begin{equation} \label{eq: autovalores BX+ chars real}
	\lambda_{BX^+}^{\pm} (\chi) = 
			\tfrac{1}{\chi(1)} \left\{ \tfrac{ \chi(S_\ell) + \chi(S_r)}{2} \pm 
			\sqrt{ \Big(\tfrac{ \chi(S_\ell) - \chi(S_r)}{2} \Big)^2 + |\chi(S_m)|^2 } \right\},  
\end{equation}	

\item If $\chi \notin \hat{G}_{\R}$, then
\begin{equation} \label{eq: autovalores BX+ chars}
	\lambda_{BX^+}^{\pm, \sigma} (\chi) = 
		\tfrac{\sigma}{\chi(1)} \left\{ \tfrac{ |\chi(S_\ell)| + |\chi(S_r)|}{2} \pm 
		\sqrt{ \Big(\tfrac{ |\chi(S_\ell)| - |\chi(S_r)|}{2} \Big)^2 + |\chi(S_m)|^2 } \right\}, 
\end{equation}	
where $\sigma \in \{\pm 1\}$ and $\chi \in \hat G$ runs over the set of all irreducible characters of $G$.
\end{itemize}

\noindent $(b)$ 
The eigenvalues of the di-Cayley sum graph $DX^+(G;S_\ell,S_r,S_m)$ are given as follows:

\begin{itemize}
	\item If $\chi \in \hat{G}_{\R}$, then 
	\begin{equation} \label{eq: autovalores DX+ chars real}
		\lambda_{DX^+}^{\pm} (\chi) = 
			\tfrac{1}{\chi(1)} \left\{  \tfrac{ \chi(S_\ell) + \chi(S_r)}{2} \pm 
			\sqrt{ \Big(\tfrac{ \chi(S_\ell) - \chi(S_r)}{2} \Big)^2 + \chi(S_m)^2 } \right\}.
	\end{equation}

	\item If $\chi \notin \hat{G}_{\R}$, then 
	\begin{equation} \label{eq: autovalores DX+ chars}
		\lambda_{DX^+}^{\pm, \sigma} (\chi) = 
			\tfrac{\sigma}{\chi(1)} \left\{  \tfrac{ |\chi(S_\ell)| + |\chi(S_r)|}{2} \pm 
			\sqrt{ \Big(\tfrac{ |\chi(S_\ell)| - |\chi(S_r)|}{2} \Big)^2 + |\chi(S_m)|^2 } \right\}, 
	\end{equation}
\end{itemize}  
where $\sigma \in \{\pm 1\}$ and $\chi \in \hat G$ runs over the set of all irreducible characters of $G$.
\end{coro}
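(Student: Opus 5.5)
The plan is to obtain the statement by substituting the character formulas of Proposition~\ref{prop: spec XGS+} into the expressions \eqref{eq: autovalores BX+} and \eqref{eq: autovalores DX+} of Theorem~\ref{thm: Spec DX+}. This mirrors how Corollary~\ref{coro: spec with chi(G)} was derived from Theorem~\ref{thm: Spec DX} and Proposition~\ref{prop: spec XGS}. The only real work is tracking which eigenvalue of each cover is paired with which, since for sum graphs a non-real $\chi$ contributes a pair of eigenvalues rather than one.

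\emph{Real characters.} For $\chi\in\hat G_\R$, Proposition~\ref{prop: spec XGS+} gives the single eigenvalue $\lambda^+_{S,\chi}=\chi(S)/\chi(1)$ for each $S\in\{S_\ell,S_r,S_m\}$, and this value is real. Inserting these three values into \eqref{eq: autovalores BX+} and \eqref{eq: autovalores DX+} and factoring out $1/\chi(1)$ gives \eqref{eq: autovalores BX+ chars real} and \eqref{eq: autovalores DX+ chars real}. The factor comes out of the square root as $\sqrt{1/\chi(1)^2}=1/\chi(1)$ because $\chi(1)>0$.

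\emph{Non-real characters.} Here I would go back into the proof of Proposition~\ref{prop: spec XGS+} rather than use its statement as a black box. For $\chi\notin\hat G_\R$, the inversion $\iota$ swaps the block $I_j$ with a different block $I_k$. On each two-dimensional invariant piece, $\rho_z$ acts by an anti-diagonal $2\times2$ matrix whose entries are the central scalars of $z$ on $I_j$ and on $I_k$. These two scalars are $\chi(S)/\chi(1)$ and $\overline{\chi}(S)/\chi(1)$, which are complex conjugates. Hence the eigenvalues are $\pm|\chi(S)|/\chi(1)$, which is the modulus form that appears in the statement.

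The key point is that these invariant pieces, and the splitting into $\pm$ eigenvectors, depend only on $z$ through the block structure of $\C[G]$, which is the same for every normal $S$. So in the simultaneous diagonalization used in the proof of Theorem~\ref{thm: Spec DX} (made available by Theorem~\ref{thm: ASAT=ATAS}), a common eigenvector carries the same sign $\sigma\in\{\pm1\}$ for all three covers. The $2\times2$ blocks $B_k$ attached to such an eigenvector then have entries
\[
\sigma|\chi(S_\ell)|/\chi(1),\qquad \sigma|\chi(S_r)|/\chi(1),\qquad \sigma|\chi(S_m)|/\chi(1).
\]

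Substituting these entries into \eqref{eq: autovalores BX+} and \eqref{eq: autovalores DX+}, the diagonal part gives $\sigma(|\chi(S_\ell)|+|\chi(S_r)|)/(2\chi(1))$. Under the square root, $\sigma^2=1$ removes the sign, and the di-connection term becomes $|\chi(S_m)|^2/\chi(1)^2$ in both cases, since $(\sigma|\chi(S_m)|)^2=\big|\sigma|\chi(S_m)|\big|^2$. Because the $\pm$ in front of the root is symmetric, $\sigma$ can be pulled out of the whole bracket. This gives \eqref{eq: autovalores BX+ chars} and \eqref{eq: autovalores DX+ chars}, and it explains why the bi- and di-sum formulas coincide for non-real $\chi$.

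\emph{Main obstacle.} The delicate step is justifying this sign coherence, namely that the $+$ eigenvector of $A^+_{S_\ell}$ is also the $+$ eigenvector of $A^+_{S_r}$ and $A^+_{S_m}$. A priori one could have mixed pairings such as $|\chi(S_\ell)|$ with $-|\chi(S_r)|$. I would handle it by the explicit description above: the symmetric and antisymmetric combinations $E_{k\ell}\pm E_{\ell k}$ across the blocks $I_j\oplus\iota(I_j)$ are eigenvectors of every $\rho_z$ with $z$ central. This holds provided one chooses the phases of the two blocks so that each off-diagonal entry becomes $|\chi(S)|$. I expect that normalization, and checking that it can be made simultaneously for $S_\ell$, $S_r$ and $S_m$, to be the step requiring the most care.
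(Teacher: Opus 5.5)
\textbf{The sign coherence you flag as the main obstacle cannot be established. It is false, and with it the non-real case of the statement.} Two of your steps break.

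First, Theorem~\ref{thm: ASAT=ATAS} cannot give a simultaneous diagonalization for sum graphs, because its sum-graph part fails. Let $J$ be the permutation matrix of $g\mapsto g^{-1}$. Then $A^+_S=JA_S$, so $A^+_SA^+_T=A_{S^{-1}}A_T$ while $A^+_TA^+_S=A_{T^{-1}}A_S$. In $\Z_4$ with $S=\{0\}$ and $T=\{1\}$ these are the translations by $+1$ and by $-1$.

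Second, the $\pm$ eigenvectors are not shared by the three covers. For a linear non-real $\chi$ one has $A^+_S\chi=\chi(S)\,\overline\chi$ and $A^+_S\overline\chi=\overline{\chi(S)}\,\chi$. So the plane $\mathrm{span}\{\chi,\overline\chi\}$ is common to all covers. However, the $\pm|\chi(S)|$-eigenvectors are $\chi\pm\frac{\chi(S)}{|\chi(S)|}\overline\chi$, and they depend on $\arg\chi(S)$.

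What one actually gets uses the basis $(\chi,0),(0,\chi),(\overline\chi,0),(0,\overline\chi)$. In it, $A_{DX^+}$ is the block
\[
\begin{pmatrix}0&\overline N\\ N&0\end{pmatrix},\qquad N=\begin{pmatrix}\chi(S_\ell)&\chi(S_m)\\ \chi(S_m)&\chi(S_r)\end{pmatrix},
\]
whose eigenvalues are $\pm s_1,\pm s_2$, where $s_1,s_2$ are the singular values of $N$. The stated formula gives instead $\pm$ the eigenvalues of $\bigl(\begin{smallmatrix}|\chi(S_\ell)|&|\chi(S_m)|\\ |\chi(S_m)|&|\chi(S_r)|\end{smallmatrix}\bigr)$. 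The two pairs always have the same sum of squares. Their products, however, are $|\chi(S_\ell)\chi(S_r)-\chi(S_m)^2|$ and $\bigl||\chi(S_\ell)\chi(S_r)|-|\chi(S_m)|^2\bigr|$ respectively. So the formula holds exactly when $\chi(S_\ell)\chi(S_r)\overline{\chi(S_m)}^{\,2}\ge 0$. This is also exactly when your phase normalization can be made simultaneously, even if independent phases are allowed on the two covers.

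Concretely, take $G=\Z_4$, $S_\ell=S_r=\{0\}$, $S_m=\{1\}$ and $\chi(x)=i^x$. Then $N^*N=2I$, so the pair $\chi,\overline\chi$ contributes $\pm\sqrt2$, each twice. The formula gives $2,0,0,-2$ instead.

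A direct check confirms this. Here $A=\bigl(\begin{smallmatrix}L&C\\ C&L\end{smallmatrix}\bigr)$, with $L$ and $C$ the permutation matrices of $h\mapsto-h$ and $h\mapsto 1-h$. Each of $L\pm C$ is, up to signing, the path $P_4$ with loops at both ends. The spectrum is therefore $\{[2]^2,[0]^2,[\pm\sqrt2]^2\}$, and $-2$ is not an eigenvalue. The same failure occurs for $\chi_{(1,0)}$ in Example~\ref{exam: 6.4}: there $N^*N=5I$, giving $\pm\sqrt5$ rather than $\pm3,\pm1$.

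The paper's proof is the same substitution as yours. It disposes of this issue by claiming that every combination of signs yields the same multiset as a single global $\sigma$. That claim also fails: with all moduli equal to $1$, mixed signs give $\pm\sqrt2$, while a global $\sigma$ gives $2,0,0,-2$. So you located the real problem. The right conclusion, though, is that the statement needs correcting, either to singular values of $N$ or to the phase condition above.

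Your real-character step, taken from Proposition~\ref{prop: spec XGS+}, is also only correct when $\chi(1)=1$. For a real $\chi$ of degree $d\ge 2$, inversion acts on the isotypic block with both eigenvalues $\pm1$, so $-\lambda^{\pm}(\chi)$ also occur. For example, $DX^+(\mathbb{S}_3;\{e\},\{e\},\{e\})$ has adjacency matrix $\bigl(\begin{smallmatrix}1&1\\1&1\end{smallmatrix}\bigr)\otimes J$. This matrix has the eigenvalue $-2$, whereas the formula only produces $2$ and $0$.
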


Note that for $\chi$ a non-real character of $G$, the expressions for the eigenvalues for the di and bi-Cayley sum graphs coincide, that is $\lambda_{BX^+}^{\pm, \sigma} (\chi) = \lambda_{DX^+}^{\pm, \sigma} (\chi)$.

\begin{proof}
The result is obtained by introducing the expressions of the eigenvalues of Cayley sum graphs of Proposition \ref{prop: spec XGS+} inside the expressions for the eigenvalues of the bi/di-Cayley sum graphs in Theorem \ref{thm: Spec DX+}.
Indeed, in the cases where $\chi\notin\hat{G}_{\R}$, the eigenvalues of the individual blocks are of the form $\pm |\chi(S_i)|$. However, considering the expression under every possible combination of signs $\pm$ yields the exact same multiset of eigenvalues as the one generated using only the single parameter $\sigma \in \{\pm 1\}$. This allows us to write the formulas in the simplified form using the global sign $\sigma$.
\end{proof}

\begin{rem}
Using that $\chi(S \cup T)= \chi(S) +\chi(T) -\chi(S\cap T)$ for subsets $S,T$ of $G$, expressions \eqref{eq: autovalores BX chars} and \eqref{eq: autovalores DX chars} in the previous corollary can be put in terms of unions and intersections of $S_\ell$ and $S_r$; namely, 
	$$ \chi(S_\ell)+\chi(S_r)=\chi(S_\ell \cup S_r) + \chi(S_\ell \cap S_r). $$
In particular, if $S_\ell$ and $S_r$ are disjoint, we have $ \chi(S_\ell)+\chi(S_r)=\chi(S_\ell \cup S_r)$.

Moreover, if $G$ is abelian, then $\chi(-T)=-\chi(T)$ for any $T\subset G$ and hence 
	$$ \chi(S_\ell)-\chi(S_r) = \chi(S_\ell \cup -S_r) + \chi(S_\ell \cap -S_r) .$$
In this way, if $S_r$ is symmetric, i.e.\@ $S_r = -S_r$, the expressions for the eigenvalues depends on the number $\chi(S_\ell)+\chi(S_r)$. 

Summing up, if $G$ is abelian, $S_\ell, S_r$ are disjoint subsets of $G$ with $S_r$ symmetric, expression \eqref{eq: autovalores DX chars} for the eigenvalues in the no-sum case, takes the simple form 
 \begin{equation} \label{eq: autovalores DX chars simple}
 	\lambda_{DX}^{\pm} (\chi) = \tfrac{1}{\chi(1)} \big\{ \chi_{\ell,r} \pm 
 	\sqrt{ \chi_{\ell,r}^2 + \chi(S_m)^2} \big \},
 \end{equation}	
where $\chi_{\ell,r}= \frac{\chi(S_\ell) + \chi(S_r)}2$, and similarly for the expression \eqref{eq: autovalores BX chars} for $\lambda_{BX}^{\pm} (\chi)$.
\end{rem}

\begin{rem}
There are families of non-abelian groups $G$ having real characters and hence the Cayley (sum) graphs $X^*(G,S)$ have real spectrum. 
For instance: dihedral groups $\mathbb{D}_n$, quaternionic groups $\mathbb{Q}_{4n}$ for $n$ even and symmetric groups $\mathbb{S}_n$ have real characters.
Notice that, for the same groups $G$ as above, we also have that the bi/di-Cayley (sum) graphs $X^*(G;S_\ell,S_r,S_m)$ have real spectrum, as one can see from the expressions in Corollaries \ref{coro: spec with chi(G)} and \ref{coro: spec with chi(G)+}.
\end{rem}

\subsection*{Mirror di-Cayley (sum) graphs and integrality of the spectrum}
In the case of mirror bi/di-Cayley (sum) graphs the previous results for the eigenvalues get extremely simple. 
The following is automatic.

\begin{coro} \label{coro: spec mirror}
Let $G$ be a finite group and $S, T \subset G$ closed under conjugation.
Then, the eigenvalues of $BX^*(G;S,S,T)$ and $DX^*(G;S,S,T)$ are given by
\begin{equation} \label{eq: spec mirror}
	\lambda_{BX^*}^{\pm} (\chi) = \lambda_{S,\chi}^* \pm  |\lambda_{T,\chi}^*|,
		\qquad \text{and} \qquad
	\lambda_{DX^*}^{\pm} (\chi) = \lambda_{S,\chi}^* \pm  \lambda_{T,\chi}^*,
\end{equation}
where $\lambda_{S,\chi}^*$ and $\lambda_{T,\chi}^*$ are respectively the eigenvalues of $X^*(G,S)$ and $X^*(G,T)$ corresponding to $\chi$, where $\chi \in \hat G$ runs over the set of all irreducible characters of $G$.
\end{coro}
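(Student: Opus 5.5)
I will derive this as the special case $S_\ell=S_r=S$, $S_m=T$ of Theorems \ref{thm: Spec DX} and \ref{thm: Spec DX+}. Since $S$ and $T$ are closed under conjugation, their hypotheses hold. Setting $\lambda_{S_\ell,\chi}^*=\lambda_{S_r,\chi}^*=\lambda_{S,\chi}^*$ in \eqref{eq: autovalores BX} and \eqref{eq: autovalores BX+}, the mean term becomes $\lambda_{S,\chi}^*$ and the difference term under the square root vanishes. This leaves $\sqrt{|\lambda_{T,\chi}^*|^2}=|\lambda_{T,\chi}^*|$, which gives the bi-Cayley formula at once.

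For the di-Cayley case, \eqref{eq: autovalores DX} gives $\lambda_{S,\chi}^*\pm\sqrt{(\lambda_{T,\chi}^*)^2}$. This is the step needing care, because $\lambda_{T,\chi}^*$ may be complex when $T$ is not symmetric, so $\sqrt{(\lambda_{T,\chi}^*)^2}$ is only determined up to sign. Since both signs $\pm$ appear, the unordered pair $\{\lambda_{S,\chi}^*+\lambda_{T,\chi}^*,\ \lambda_{S,\chi}^*-\lambda_{T,\chi}^*\}$ is the same whichever root is chosen, and that gives \eqref{eq: spec mirror}.

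To avoid relying on this branch argument, I would also check the formula directly from the block form in Corollary \ref{coro: mirror Adj mats}. For $A_{DX^*}=\begin{pmatrix} A & B\\ B & A\end{pmatrix}$ with $A=A_{X_S^*}$ and $B=A_{X_T^*}$, the change of basis $\frac{1}{\sqrt2}\begin{pmatrix} I & I\\ I & -I\end{pmatrix}$ block-diagonalizes it to $\mathrm{diag}(A+B,\,A-B)$. By Theorem \ref{thm: ASAT=ATAS} and Remark \ref{rem: BBt=BtB}, $A$ and $B$ are commuting normal matrices, so they diagonalize simultaneously. Their eigenvalues pair up through the same character $\chi$, and the eigenvalues of $A\pm B$ are therefore $\lambda_{S,\chi}^*\pm\lambda_{T,\chi}^*$.

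For the bi-Cayley block $\begin{pmatrix} A & B\\ B^t & A\end{pmatrix}$, simultaneous unitary diagonalization gives $2\times2$ blocks $\begin{pmatrix}\lambda & \mu\\ \bar\mu & \lambda\end{pmatrix}$. Here I use that $B^t=A_{T^{-1}}$ by Lemma \ref{lem: A y At}, and that it acts by $\bar\mu$ on a common eigenvector. These blocks have eigenvalues $\lambda\pm|\mu|$.

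The main obstacle is the sum-graph case with a non-real $\chi$. There Proposition \ref{prop: spec XGS+} gives eigenvalues $\pm\frac{1}{\chi(1)}\chi(S)$, and one must check that the $S$- and $T$-eigenvalues attached to the same common eigenvector are matched consistently. The matching is determined by the $\rho_z$ decomposition in the proof of Proposition \ref{prop: spec XGS+}: the $2\times2$ blocks on $\{E_{k\ell},E_{\ell k}\}$ are the same for $S$ and $T$, so the simultaneous eigenvectors $E_{k\ell}\pm E_{\ell k}$ carry matching signs. With that matching, the formula is read off the same way, interpreting $\lambda_{S,\chi}^*$ and $\lambda_{T,\chi}^*$ with compatible signs.
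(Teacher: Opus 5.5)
For $BX(G;S,S,T)$ and $DX(G;S,S,T)$ (the non-sum graphs) your argument is correct. The specialization is exactly the paper's one-line proof. Your check via $\frac{1}{\sqrt2}\begin{pmatrix} I & I\\ I & -I\end{pmatrix}$ and the blocks $\begin{pmatrix}\lambda&\mu\\ \bar\mu&\lambda\end{pmatrix}$ is a sound independent confirmation, because $A_S$, $A_T$ and $A_T^t=A_{T^{-1}}$ act by scalars on each isotypic component.

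The gap is in the sum case, at your sign-matching step, and it cannot be closed. For a non-real $\chi$, the inversion $\iota$ maps $I_\chi$ onto $I_{\bar\chi}\neq I_\chi$. The description in the proof of Proposition~\ref{prop: spec XGS+} that you rely on overlooks this. Consequently, for $x\in I_\chi$ the operator $\rho_{z_S}$ (whose matrix is $A^+_S$) acts on $\mathrm{span}\{x,\iota(x)\}$ by $\begin{pmatrix}0&\lambda\\ \bar\lambda&0\end{pmatrix}$, with $\lambda=\chi(S)/\chi(1)$ up to the conjugation convention. Its eigenvectors are $x\pm e^{-i\arg\lambda}\,\iota(x)$, which depend on the phase of $\chi(S)$. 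So $A^+_S$ and $A^+_T$ share eigenvectors there only when $\chi(S)\overline{\chi(T)}\in\R$. In general they do not even commute: on $I_\chi$ the commutator is a multiple of $\mathrm{Im}\big(\chi(S)\overline{\chi(T)}\big)$. Hence the sum half of Theorem~\ref{thm: ASAT=ATAS} fails, and with it the paper's route through Theorem~\ref{thm: Spec DX+}, which your first paragraph also uses.

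In fact the sum part of the statement is false. Take $G=\Z_3$, $S=\{0\}$, $T=\{1\}$.
\begin{itemize}
\item $A^+_SA^+_T$ and $A^+_TA^+_S$ are the two distinct $3$-cycle permutation matrices, so these matrices do not commute.
\item $X^+(G,S)$ and $X^+(G,T)$ both have spectrum $\{[1]^2,[-1]^1\}$. Hence every value $\lambda_S\pm\lambda_T$ or $\lambda_S\pm|\lambda_T|$ lies in $\{0,\pm2\}$.
\item Your own block reduction needs no commutativity. It gives $\mathrm{Spec}\,DX^+(G;S,S,T)=\mathrm{Spec}(A^+_S+A^+_T)\cup\mathrm{Spec}(A^+_S-A^+_T)=\{2,1,-1\}\cup\{0,\sqrt3,-\sqrt3\}$.
\item Here $BX^+=DX^+$, since $A^+_T$ is symmetric, so both sum formulas fail.
\end{itemize}

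What the reduction does prove in general is $\mathrm{Spec}(\rho_{z_S+z_T})\cup\mathrm{Spec}(\rho_{z_S-z_T})$. There are two cases.
\begin{itemize}
\item For real-valued $\chi$, $\iota$ preserves $I_\chi$ and $\rho_{z_S}=\lambda_{S,\chi}\,\iota$ there. The $\pm1$-eigenspaces of $\iota$ are then common to $S$ and $T$, and you get $\epsilon(\lambda_{S,\chi}\pm\lambda_{T,\chi})$ with $\epsilon=\pm1$. This is the stated formula with matched signs.
\item For each pair $\{\chi,\bar\chi\}$ of non-real characters you get $\pm|\chi(S)\pm\chi(T)|/\chi(1)$. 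This agrees with the stated formula only under the extra hypothesis $\chi(S)\overline{\chi(T)}\in\R$.
\end{itemize}
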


\begin{proof}
Just use that $S_\ell=S_r=S$ in expressions \eqref{eq: autovalores BX}--\eqref{eq: autovalores DX} in Theorem \ref{thm: Spec DX} and \eqref{eq: autovalores BX+}--\eqref{eq: autovalores DX+} in Theorem \ref{thm: Spec DX+}. 	
\end{proof}

\begin{rem} \label{rem: mirror graph products}
A particular family of mirror di-Cayley (sum) graphs was recently investigated by the authors in \cite{ChP}. Specifically, when the di-connection set is restricted to $T = S$, $T = \{e\}$, or $T = S \cup \{e\}$, the resulting mirror graphs $X^*(G; S, S, T)$ can be structurally described as standard graph products (such as the tensor, Cartesian, or strong product) of the underlying Cayley (sum) cover $X^*(G, S)$ with the graph $K_2$ (or $P_2$). In these specific cases, the spectrum of the mirror graph can be directly obtained from the spectrum of the cover by applying the well-known spectral formulas for graph products.
\end{rem}

In this case, we can say something about the integrality of the MDCGs in terms of the integrality of the Cayley (sum) covers.

\begin{prop} \label{prop: integral graph}
Let $G$ be a group and $S,T$ subsets of $G$.
		
	\begin{enumerate}[$(a)$]
		\item $X^*(G,S)$ and $X^*(G,T)$ are integral if and only if $DX^*(G;S,S,T)$ is integral. \sk    
			
		\item If $X^*(G,S)$ and $X^*(G,T)$ are integral, then $BX^*(G;S,S,T)$ is integral. 
		Conversely, if $BX^*(G;S,S,T)$ is integral then $X^*(G,S)$ is integral and $|Spec(X^*(G,T))| \subset \Z$.  \sk 
	\end{enumerate}
\end{prop}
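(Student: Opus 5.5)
The plan is to work in the setting where the spectral formulas of Corollary \ref{coro: spec mirror} are available, i.e.\@ $S,T$ normal, so that the eigenvalues of the mirror graphs are indexed by $\chi\in\hat G$. The key fact used throughout is that the spectrum of $X^*(G,S)$ is the multiset $\{\lambda^*_{S,\chi}\}_\chi$, and similarly for $T$, with the same indexing. The multiplicities match as well, since in the proof of Theorem \ref{thm: Spec DX} one simultaneously diagonalizes and pairs the $k$-th diagonal entries.

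For the easy directions, suppose $\lambda^*_{S,\chi}$ and $\lambda^*_{T,\chi}$ are integers for all $\chi$. Then $\lambda^*_{S,\chi}\pm\lambda^*_{T,\chi}$ and $\lambda^*_{S,\chi}\pm|\lambda^*_{T,\chi}|$ are integers. So both $DX^*(G;S,S,T)$ and $BX^*(G;S,S,T)$ are integral, by \eqref{eq: spec mirror}.

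For the converse in $(a)$, assume all $\lambda^\pm=\lambda_{S,\chi}^*\pm\lambda_{T,\chi}^*$ are integers. Then
$$\lambda^*_{S,\chi}=\tfrac12(\lambda^++\lambda^-)\in\tfrac12\Z \qquad \text{and} \qquad \lambda^*_{T,\chi}=\tfrac12(\lambda^+-\lambda^-)\in\tfrac12\Z.$$
Each $\lambda^*_{S,\chi}$ is an eigenvalue of a $0/1$ integer matrix, hence an algebraic integer. A rational algebraic integer is an integer, so $\lambda^*_{S,\chi}\in\Z$, and likewise $\lambda^*_{T,\chi}\in\Z$. Since these exhaust the spectra of the covers, the covers are integral.

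The converse in $(b)$ uses the same argument. From $\lambda^*_{S,\chi}\pm|\lambda^*_{T,\chi}|\in\Z$ we get that $\lambda^*_{S,\chi}$ and $|\lambda^*_{T,\chi}|$ lie in $\tfrac12\Z$.

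1. For $\lambda^*_{S,\chi}$ this gives an algebraic integer in $\tfrac12\Z$, hence an integer. So $X^*(G,S)$ is integral.
2. For $|\lambda^*_{T,\chi}|$, note that $|\lambda|^2=\lambda\bar\lambda$ is an algebraic integer, since $\bar\lambda$ is also an eigenvalue (the matrix is real). A rational number whose square is an algebraic integer is itself an integer, so $|\lambda^*_{T,\chi}|\in\Z$. This is exactly the claim $|Spec(X^*(G,T))|\subset\Z$.

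The main obstacle is making the identification between the multisets precise, in particular the multiplicities. The point is that every eigenvalue of each cover arises as some $\lambda^*_{\cdot,\chi_k}$ in the paired diagonalization. For the sum-graph case one must also check that Theorem \ref{thm: Spec DX+} pairs $\lambda^+_{S,\chi}$ with $\lambda^+_{T,\chi}$ consistently. This includes the $\pm$ pairs coming from non-real characters, where the simultaneous block decomposition of $\rho_z$ in the proof of Proposition \ref{prop: spec XGS+} must be used for both $S$ and $T$ at once.
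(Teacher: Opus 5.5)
Your argument is the paper's own: read the mirror eigenvalues off \eqref{eq: spec mirror}, solve for $\lambda^*_{S,\chi}$ and $\lambda^*_{T,\chi}$ (resp.\ $|\lambda^*_{T,\chi}|$) in $\tfrac12\Z$, and use that a rational algebraic integer is an integer. For ordinary bi/di-Cayley graphs $X$ with $S,T$ normal this is complete. Your treatment of $|\lambda_{T,\chi}|$ via $\lambda\bar\lambda$ is more careful than the paper's ``argument analogous to $(a)$'', since $|\lambda_{T,\chi}|$ is not itself a graph eigenvalue. Adding normality is also necessary, not cosmetic. For $G=\mathbb{S}_3$, $S=\{(12)\}$, $T=\{(13)\}$, both covers are perfect matchings with spectrum $\{\pm1\}$. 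But $(A_S-A_T)^2=2I-P-P^{t}$, with $P=A_SA_T$ left multiplication by a $3$-cycle, so $A_S-A_T$ has eigenvalues $\pm\sqrt3$ and $DX(\mathbb{S}_3;S,S,T)$ is not integral.

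The genuine gap is the sum-graph case, which you postpone as a consistency check on the pairing in Theorem \ref{thm: Spec DX+}. That check fails, and so does the statement. Let $J$ be the permutation matrix of $g\mapsto g^{-1}$. Then $A^+_S=JA_S$, and for normal $S$ one has $JA_SJ=A_S^t=A_{S^{-1}}$. Hence $A^+_SA^+_T=A_{S^{-1}}A_T$, while $A^+_TA^+_S=A_{T^{-1}}A_S$. These differ in general, so the sum-graph part of Theorem \ref{thm: ASAT=ATAS} fails and there is no simultaneous diagonalization to pair eigenvalues with. Doing exactly the computation you propose, decomposing $\rho_{z_S}$ and $\rho_{z_T}$ at once, gives the following on the $\chi\oplus\bar\chi$ part for non-real $\chi$: $A^+_S\pm A^+_T$ has eigenvalues $\pm|\chi(S)\pm\chi(T)|/\chi(1)$, not $\pm|\chi(S)|/\chi(1)\pm|\chi(T)|/\chi(1)$.

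Concretely, take $G=\Z_3$, $S=\{1\}$, $T=\{2\}$. Then $A^+_S=\left(\begin{smallmatrix}0&1&0\\1&0&0\\0&0&1\end{smallmatrix}\right)$ and $A^+_T=\left(\begin{smallmatrix}0&0&1\\0&1&0\\1&0&0\end{smallmatrix}\right)$ do not commute, and both have spectrum $\{[1]^2,[-1]^1\}$. Yet $DX^+(\Z_3;S,S,T)=BX^+(\Z_3;S,S,T)$ has spectrum $\mathrm{Spec}(A^+_S+A^+_T)\cup\mathrm{Spec}(A^+_S-A^+_T)=\{2,1,-1\}\cup\{0,\pm\sqrt3\}$. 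So the ``if'' half of $(a)$ and the first claim of $(b)$ are false for $X^+$, even with $G$ abelian, and no argument can close this gap. The paper's proof has the same defect, since it applies \eqref{eq: spec mirror} to sum graphs.

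What your proof does establish is the statement for $X$ with $S,T$ normal. It also covers $X^+$ whenever $A^+_S$ and $A^+_T$ commute, for instance when $S,T$ are normal and symmetric. In that setting your paired-diagonalization argument is legitimate.
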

	
\begin{proof}
Clearly, by \eqref{eq: spec mirror}, if $X^*(G,S)$ and $X^*(G,T)$ are integral then $DX^*(G;S,S,T)$ and $BX^*(G;S,S,T)$ are both integral. So, it remains to see the converses. \sk 
		
\noindent ($a$) If $\lambda_{DX^*}^{\pm} (\chi) = \lambda_{S,\chi}^* \pm  \lambda_{T,\chi}^* \in \Z$ we have that
$\lambda_{S,\chi}^* + \lambda_{T,\chi}^*=k$ and $\lambda_{S,\chi}^* - \lambda_{T,\chi}^*=l$ for some $k,l \in \Z$, from where we get that 
	$$ \lambda_{S,\chi}^* = \tfrac{k+l}2 \qquad \text{and} \qquad \lambda_{T,\chi}^* = \tfrac{k-l}2. $$  
Since there are no (non-integral) rational eigenvalues for any graph, we must have that $k$ and $l$ have the same parity and thus  $\lambda_{S,\chi}^*$, $\lambda_{T,\chi}^* \in \Z$.

\sk 
		
\noindent ($b$)
If $\lambda_{BX^*}^{\pm} (\chi) \in \Z$, since $|\lambda_{T,\chi}^*| \in \R$, then $\lambda_{S, \chi}^* \in \R$. Using an argument analogous to that in the proof of $(a)$, we obtain that $\lambda_{S, \chi}^*$ and $|\lambda_{T, \chi}^*|$ must be integers.
\end{proof}

We now show that the integrality of the spectrum of $BX^*(G;S,S,T)$ is not enough to ensure that $X^*(G,S)$ and $X^*(G,T)$ are integral.
 
\begin{exam}
Consider the mirror di-Cayley graph $\G=BX(\Z_3; \Z_3^*, \Z_3^*, \{2\})$, which has spectrum 
	$ Spec(\G)=\{[3]^1, [1]^1, [0]^2, [-2]^2 \} \subset \Z$. 
On the other hand, the spectrum of the Cayley covers are given by 
	$ Spec \big( X(\Z_3,\Z_3^*) \big) = \{[2]^1, [-1]^2 \} \subset\Z$ 
and 			
	$ Spec\big( X(\Z_3,\{2\}) \big) = \{[1]^1, [-\tfrac{1}{2} + \tfrac{\sqrt{3}}{2}i]^1, [-\tfrac{1}{2} - \tfrac{\sqrt{3}}{2}i]^1 \} \subset \Z[i] \smallsetminus \Z$.
However, the eigenvalue of $X(\Z_3,\{2\})$ has integral norm equal to 1.
\hfill $\diamond$
\end{exam}

\section{Explicit computations} 
\label{sec: explicit computations}
In this final section we give explicit computations of the spectrum of some bi/di-Cayley (sum) graphs. We will consider six examples of both mirror and non-mirror bi/di-Cayley (sum) graphs with both abelian and non-abelian groups. We will perform the calculations in all detail.

In these examples we will use the results in Section \ref{sec: spec} to compute the eigenvalues of the Cayley (sum) graphs $X^*(G,S_\ell)$, $X^*(G,S_r)$ and $X^*(G,T)$ using characters of $G$ and then then the results of Section \ref{sec: spectrum of dicayleys} to compute the eigenvalues of the bi/di-Cayley (sum) graphs, namely Corollaries~\ref{coro: spec with chi(G)} and \ref{coro: spec with chi(G)+} for the non-mirror case and Corollary \ref{coro: spec mirror} for MDCGs.

Also, the sum of the eigenvalues is the number of loops, so if the graph is loopless the sum equals zero.

We begin with three examples of MDCGs with abelian and non-abelian groups.

\begin{exam}[\textit{Mirror, abelian}] \label{exam: 6.1}
Let $G=\Z_6$ and its subsets 
	$$	S = \{1, 2, 4, 5\} \qquad \text{and} \qquad T = \{1, 2\} $$
and consider the associated 
mirror bi/di-Cayley (sum) graphs $\G_{bi}^*=BX^*(\Z_6;S,S,T)$ and $\G_{di}^*=DX^*(\Z_6;S,S,T)$.
Since the group is abelian, the sum graphs $\G_{bi}^+$ and $\G_{di}^+$ are undirected and coincide.
Notice that $S$ is symmetric while $T$ is not, hence the Cayley covers are undirected while the edges connecting them are directed in the non-sum case. 
The sum graphs have loops.
The geometric representations of these graphs and of their Cayley covers are given, separately for more clarity, as follows

	\tikzset{
		cayley node/.style={circle, fill=black, inner sep=0pt, minimum size=2mm},
		edge S/.style={thick, color=black},
		edge T/.style={thick, color=purple, ->, >=stealth},
		edge T bi/.style={thick, color=purple}, 
		font label/.style={font=\scriptsize}
	}
	
	\begin{figure}[H]
		\centering
		\begin{minipage}[c]{0.65\textwidth}
			\centering
			\begin{tikzpicture}[scale=0.8, every node/.append style={transform shape}]
				\foreach \i in {0,...,5} {
					\node[cayley node, label={[font label]above:{$(\i,1)$}}] (M\i) at (\i*1.3, 1.8) {};
					\node[cayley node, label={[font label]below:{$(\i,0)$}}] (P\i) at (\i*1.3, 0) {};
				}
				
				\foreach \x in {0,...,5} {
					\pgfmathtruncatemacro{\nextone}{int(mod(\x + 1, 6))}
					\pgfmathtruncatemacro{\nexttwo}{int(mod(\x + 2, 6))}
					
					\ifnum\nextone<\x
					\draw[edge S] (P\x) to[bend left=30] (P\nextone);
					\draw[edge S] (M\x) to[bend right=30] (M\nextone);
					\else
					\draw[edge S] (P\x) -- (P\nextone);
					\draw[edge S] (M\x) -- (M\nextone);
					\fi
					
					\ifnum\nexttwo<\x
					\draw[edge S] (P\x) to[bend left=40] (P\nexttwo);
					\draw[edge S] (M\x) to[bend right=40] (M\nexttwo);
					\else
					\draw[edge S] (P\x) to[bend right=20] (P\nexttwo);
					\draw[edge S] (M\x) to[bend left=20] (M\nexttwo);
					\fi
				}
				
				\foreach \x in {0,...,5} {
					\pgfmathtruncatemacro{\tone}{int(mod(\x + 1, 6))}
					\pgfmathtruncatemacro{\ttwo}{int(mod(\x + 2, 6))}
					
					\draw[edge T] (P\x) -- (M\tone);
					\draw[edge T] (P\x) -- (M\ttwo);
					\draw[edge T] (M\x) -- (P\tone);
					\draw[edge T] (M\x) -- (P\ttwo);
				}
			\end{tikzpicture}
			\caption*{$DX(\mathbb{Z}_6;S,S,T)$.}
			
	\vspace{0.3cm}
			
			\begin{tikzpicture}[scale=0.8, every node/.append style={transform shape}]
				\foreach \i in {0,...,5} {
					\node[cayley node, label={[font label]above:{$(\i,1)$}}] (M\i) at (\i*1.3, 1.8) {};
					\node[cayley node, label={[font label]below:{$(\i,0)$}}] (P\i) at (\i*1.3, 0) {};
				}
				
				\foreach \x in {0,...,5} {
					\pgfmathtruncatemacro{\nextone}{int(mod(\x + 1, 6))}
					\pgfmathtruncatemacro{\nexttwo}{int(mod(\x + 2, 6))}
					
					\ifnum\nextone<\x
					\draw[edge S] (P\x) to[bend left=30] (P\nextone);
					\draw[edge S] (M\x) to[bend right=30] (M\nextone);
					\else
					\draw[edge S] (P\x) -- (P\nextone);
					\draw[edge S] (M\x) -- (M\nextone);
					\fi
					
					\ifnum\nexttwo<\x
					\draw[edge S] (P\x) to[bend left=40] (P\nexttwo);
					\draw[edge S] (M\x) to[bend right=40] (M\nexttwo);
					\else
					\draw[edge S] (P\x) to[bend right=20] (P\nexttwo);
					\draw[edge S] (M\x) to[bend left=20] (M\nexttwo);
					\fi
				}
				
				\foreach \x in {0,...,5} {
					\pgfmathtruncatemacro{\tone}{int(mod(\x + 1, 6))}
					\pgfmathtruncatemacro{\ttwo}{int(mod(\x + 2, 6))}
					
					\draw[edge T bi] (P\x) -- (M\tone);
					\draw[edge T bi] (P\x) -- (M\ttwo);
				}
			\end{tikzpicture}
			\caption*{$BX(\mathbb{Z}_6;S,S,T)$.}
		\end{minipage}%
		\hfill
		\begin{minipage}[c]{0.35\textwidth}
			\centering
			\begin{tikzpicture}[scale=1.5]
				\foreach \i in {0,...,5} {
					\pgfmathtruncatemacro{\angle}{90 - \i * 60}
					\node[cayley node, label={[font label]{\angle}:{$\i$}}] (v\i) at ({\angle}:1.1) {};
				}
				
				\foreach \i in {0,...,5} {
					\pgfmathtruncatemacro{\next}{mod(\i + 1, 6)}
					\draw[edge S] (v\i) -- (v\next);
				}
				
				\foreach \i in {0,...,5} {
					\pgfmathtruncatemacro{\next}{mod(\i + 2, 6)}
					\ifnum\i<\next
					\draw[edge S] (v\i) -- (v\next);
					\fi
				}
				\draw[edge S] (v4) -- (v0);
				\draw[edge S] (v5) -- (v1);			
			\end{tikzpicture}
			\caption*{$X(\mathbb{Z}_6,S)$.}
		\end{minipage}
	\end{figure}
	\vspace{-1cm}
	\begin{figure}[H]
		\centering
		\begin{minipage}[c]{0.65\textwidth}
			\centering
			\begin{tikzpicture}[scale=0.8, every node/.append style={transform shape}]
				\foreach \u in {0,...,5} {
					\node[cayley node, label={[font label]above:{$(\u,1)$}}] (B\u) at (\u*1.3, 1.8) {};
					\node[cayley node, label={[font label]below:{$(\u,0)$}}] (A\u) at (\u*1.3, 0) {};
				}
				
				\foreach \u in {0,...,5} {
					\foreach \s in {1,2,4,5} {
						\pgfmathtruncatemacro{\v}{mod(\s - \u + 6, 6)}
						\ifnum\u=\v
						\draw[edge S] (A\u) to[out=230,in=310,looseness=6] (A\u); 
						\fi
						\ifnum\u<\v
						\pgfmathtruncatemacro{\diff}{\v - \u}
						\ifnum\diff=1 \draw[edge S] (A\u) -- (A\v); \fi
						\ifnum\diff>1 \draw[edge S] (A\u) to[bend right=35] (A\v); \fi
						\fi
					}
				}
				
				\foreach \u in {0,...,5} {
					\foreach \s in {1,2,4,5} {
						\pgfmathtruncatemacro{\v}{mod(\s - \u + 6, 6)}
						\ifnum\u=\v
						\draw[edge S] (B\u) to[out=50,in=130,looseness=6] (B\u); 
						\fi
						\ifnum\u<\v
						\pgfmathtruncatemacro{\diff}{\v - \u}
						\ifnum\diff=1 \draw[edge S] (B\u) -- (B\v); \fi
						\ifnum\diff>1 \draw[edge S] (B\u) to[bend left=35] (B\v); \fi
						\fi
					}
				}
				
				\foreach \u in {0,...,5} {
					\foreach \t in {1,2} {
						\pgfmathtruncatemacro{\v}{mod(\t - \u + 6, 6)}
						\draw[edge T bi] (A\u) -- (B\v);
					}
				}
			\end{tikzpicture}
			\caption*{$BX^+(\mathbb{Z}_6;S,S,T) = DX^+(\mathbb{Z}_6;S,S,T)$.}
		\end{minipage}%
		\hfill
		\begin{minipage}[c]{0.35\textwidth}
			\centering
			\begin{tikzpicture}[scale=1.5]
				\foreach \i in {0,...,5} {
					\pgfmathtruncatemacro{\angle}{90 - \i * 60}
					\node[cayley node, label={[font label]{\angle}:{$\i$}}] (v\i) at ({\angle}:1.1) {};
				}
				
				\foreach \u in {0,...,5} {
					\foreach \s in {1,2,4,5} {
						\pgfmathtruncatemacro{\v}{mod(\s - \u + 6, 6)}
						\ifnum\u=\v
						\pgfmathtruncatemacro{\angle}{90 - \u * 60}
						\draw[edge S] (v\u) to[out=\angle-45,in=\angle+45,looseness=8] (v\u);
						\fi
						\ifnum\u<\v
						\draw[edge S] (v\u) -- (v\v);
						\fi
					}
				}
			\end{tikzpicture}
			\caption*{$X^+(\mathbb{Z}_6,S)$.}
		\end{minipage}
	\end{figure}
	
We now compute the spectrum of the graphs $\G_{di}$, $\G_{bi}$ and $\G_{di}^+$. The eigenvalues of these graphs are given in terms of the irreducible characters of $\Z_6$, which are of the form 
\begin{equation}
	\chi_k(g) = \omega^{k \cdot g} = e^{\frac{\pi i}{3} k \cdot g}, \qquad (0 \le k \le 5),
\end{equation}
where $\omega = e^{\frac{2\pi i}{6}}$. 
We give the character table of $\Z_6$; for $j,k \in \{0,\ldots,5\}$ we have
\begin{table}[H]
	\centering
	\label{tab:caracteres_z6}
	\begin{tabular}{ccccccc}
		\hline
		$j$	& $0$ & $1$ & $2$ & $3$ & $4$ & $5$ \\ \hline
		$\chi_0(j)$ & $1$ & $1$ & $1$ & $1$ & $1$ & $1$ \\
		$\chi_1(j)$ & $1$ & $\omega$ & $\omega^2$ & $-1$ & $\omega^4$ & $\omega^5$ \\
		$\chi_2(j)$ & $1$ & $\omega^2$ & $\omega^4$ & $1$ & $\omega^2$ & $\omega^4$ \\
		$\chi_3(j)$ & $1$ & $-1$ & $1$ & $-1$ & $1$ & $-1$ \\
		$\chi_4(j)$ & $1$ & $\omega^4$ & $\omega^2$ & $1$ & $\omega^4$ & $\omega^2$ \\
		$\chi_5(j)$ & $1$ & $\omega^5$ & $\omega^4$ & $-1$ & $\omega^2$ & $\omega$ \\ \hline
	\end{tabular}
	\caption{Character table of the group $\Z_6$.}
\end{table}

Using this table, we compute $\chi_k(S)$ and $\chi_k(T)$ for $k=0,1,2,3,4,5$:

For $k=0$, we have that $\lambda_{\G_{di}}^\pm(\chi_0) = \chi_0(S) \pm \chi_0(T) = 4 \pm 2$ since
	\begin{align*}
		 \chi_0(S) &= \chi_0(1)+\chi_0(2)+\chi_0(4)+\chi_0(5) = 1+1+1+1 = 4, \\
		 \chi_0(T) &= \chi_0(1)+\chi_0(2) = 1+1 = 2. 
	\end{align*}
For $k=1$, we have that $\lambda_{\G_{di}}^\pm(\chi_1) = \chi_1(S) \pm \chi_1(T) = \pm\sqrt{3}i$ since
	\begin{align*}
		\chi_1(S) &= \chi_1(1)+\chi_1(2)+\chi_1(4)+\chi_1(5) = \omega + \omega^2 + \omega^4 + \omega^5 = 0, \\
		\chi_1(T) &= \chi_1(1)+\chi_1(2) = \omega + \omega^2 = \sqrt{3}i.
	\end{align*}
For $k = 2$, we have that $\lambda_{\G_{di}}^\pm(\chi_2) = \chi_2(S) \pm \chi_2(T) = -2\pm(-1)$ since
	\begin{align*}
		\chi_2(S) &= \chi_2(1)+\chi_2(2)+\chi_2(4)+\chi_2(5) =  \omega^2 + \omega^4 + \omega^2 + \omega^4 = -2, \\
		\chi_2(T) &= \chi_2(1)+\chi_2(2) = \omega^2 + \omega^4 = -1. 
	\end{align*}
For $k = 3$, we have that $\lambda_{\G_{di}}^\pm(\chi_3)= \chi_3(S) \pm \chi_3(T) = 0$, with multiplicity $2$, since
	\begin{align*}
		\chi_3(S) &= \chi_3(1)+\chi_3(2)+\chi_3(4)+\chi_3(5) = -1+1+1+(-1) = 0, \\
		\chi_3(T) &= \chi_3(1)+\chi_3(2) = -1+1 = 0.
	\end{align*}
For $k=4$, we have that $\lambda_{\G_{di}}^\pm(\chi_4) = \chi_4(S) \pm \chi_4(T) = -2\pm(-1)$ since
	\begin{align*}
		\chi_4(S) & = \chi_4(1)+\chi_4(2)+\chi_4(4)+\chi_4(5) = \omega^4+\omega^2+\omega^4+\omega^2 = -2,\\
		\chi_4(T) & = \chi_4(1)+\chi_4(2) = \omega^4+\omega^2 = -1.
	\end{align*}
For $k=5$, we have that $\lambda_{\G_{di}}^\pm(\chi_5) = \chi_5(S) \pm \chi_5(T) = \pm\sqrt{3}i$ since
	\begin{align*}
		\chi_5(S) & = \chi_5(1)+\chi_5(2)+\chi_5(4)+\chi_5(5) = \omega^5+\omega^4+\omega^2+\omega = 0,\\
		\chi_5(T) & = \chi_5(1)+\chi_5(2) = \omega^5+\omega^4 = -\sqrt{3}i.
	\end{align*}
Putting together all this information, we have that the spectrum of $\G_{di}$ 
is given by
$$ Spec(\G_{di}) = \{[6]^1,[2]^1,[0]^2,[-1]^2,[-3]^2,[\sqrt{3}i]^2,[-\sqrt{3}i]^2\}.$$
That is $\G$ is an abelian mirror di-Cayley graph with spectrum $Spec(\G_{di}) \subset \Z[\sqrt 3 i]$.

In contrast, for the mirror bi-Cayley graph $\G_{bi}=BX(\mathbb{Z}_6; S, S, T)$ the eigenvalues associated with each irreducible character $\chi_k$ are given by 
\begin{equation}
	\lambda_{BX}^\pm(\chi_k) = \chi_k(S) \pm |\chi_k(T)|.
\end{equation}
Evaluating these expressions, the spectrum of the mirror bi-Cayley graph is given by
$$ Spec(\G_{bi}) = \{ [6]^1, [2]^1, [0]^2, [-1]^2, [-3]^2, [\sqrt{3}]^2, [-\sqrt{3}]^2 \} \subset \Z[\sqrt 3].$$
Relative to  the spectrum of the mirror di-Cayley sum graph $\G_{di}^+ = DX^+(\Z_6;S,S,T)$ we obtain
$$ Spec(\G_{di}^+) = \{ [6]^1, [3]^1, [2]^1, [1]^1, [0]^2, [-1]^1, [-3]^1, [\sqrt{3}]^2, [-\sqrt{3}]^2 \} \subset \Z[\sqrt 3].  $$
Notice that the sum of the eigenvalues equals 8, which is the number of loops. No graph in this example is integral.
\hfill $\diamond$
\end{exam}

\begin{exam}[\textit{Mirror, abelian}] \label{exam: 6.2}
Let $G$ be the cyclic group $\mathbb{Z}_{15}$ and its subsets
$S=\{5,10\}$ and $T=\{3,6,9,12\}$.  
Consider the associated mirror bi/di-Cayley (sum) graphs $\G_{bi}^*=BX^*(\Z_{15};S,S,T)$ and $\G_{di}^*=DX^*(\Z_{15};S,S,T)$.
In this case both $S$ and $T$ are symmetric and normal, hence the graphs are undirected and $\G_{bi}=\G_{di}$ and $\G_{bi}^+= \G_{di}^+$. 

For these graphs we only give the geometric representations of the Cayley covers, which are as follows:
\begin{figure}[H]
	\centering
	\begin{subfigure}{0.5\textwidth}
		\centering
		\begin{tikzpicture}[
			scale=0.85,
			v/.style={circle, fill=black, inner sep=0pt, minimum size=1.8mm},
			e/.style={draw=black, thick}
			]
			\foreach \u in {0,1,...,14} {
				\node[v] (N\u) at ({90 - \u*24}: 1.8cm) {};
				\node[font=\tiny] at ({90 - \u*24}: 2.15cm) {\u};
			}
			
			\foreach \u in {0,1,...,14} {
				\pgfmathtruncatemacro{\v}{mod(\u + 5, 15)}
				\draw[e] (N\u) -- (N\v);
			}
		\end{tikzpicture}
		\caption*{$X(\mathbb{Z}_{15},S)$}
	\end{subfigure}%
	\hfil
	\begin{subfigure}{0.5 \textwidth}
		\centering
		\begin{tikzpicture}[
			scale=0.85,
			v/.style={circle, fill=black, inner sep=0pt, minimum size=1.8mm},
			e/.style={draw=black, thick}
			]
			\foreach \u in {0,1,...,14} {
				\node[v] (N\u) at ({90 - \u*24}: 1.8cm) {};
				\node[font=\tiny] at ({90 - \u*24}: 2.15cm) {\u};
			}
			
			\foreach \u in {0,1,...,14} {
				\foreach \s in {5,10} {
					\pgfmathtruncatemacro{\v}{mod(15 + \s - \u, 15)}
					\ifnum\u<\v
					\draw[e] (N\u) -- (N\v);
					\fi
					\ifnum\u=\v
					\pgfmathsetmacro{\ang}{90 - \u*24}
					\draw[e] (N\u) to[out=\ang-30, in=\ang+30, looseness=8] (N\u);
					\fi
				}
			}
		\end{tikzpicture}
		\caption*{$X^+(\mathbb{Z}_{15},S)$}
	\end{subfigure}
\end{figure}
Notice that they are disconneted graphs. Indeed, $X(\Z_{15},S)$ is a union of five $C_3$'s while $X^+(\Z_{15},S)$ is the union of two $C_6$'s and a 3-path with loops $\mathring P_3$. However, the bi/di-Cayley (sum) graphs are connected. All sum versions have loops.

The eigenvalues of $\G$ and $\G^+$ are given in terms of the irreducible characters of $\Z_{15}$, which are given by
\begin{equation}
	\chi_k(g) = \omega^{k \cdot g} = e^{\frac{2 \pi i}{15} k \cdot g}, \qquad (0 \le k \le 14),
\end{equation}
where $\omega = e^{\frac{2\pi i}{15}}$. 
We give the character table of $\Z_{15}$; for $j,k \in \{0,\ldots,14\}$ we have
	
	\begin{table}[H]
		\centering
		\smallskip
		\footnotesize 
		\setlength{\tabcolsep}{3pt} 
		\begin{tabular}{c|ccccccccccccccc}
			$j$ & $0$ & $1$ & $2$ & $3$ & $4$ & $5$ & $6$ & $7$ & $8$ & $9$ & $10$ & $11$ & $12$ & $13$ & $14$ \\
			\hline
			$\chi_0(j)$  & $1$ & $1$ & $1$ & $1$ & $1$ & $1$ & $1$ & $1$ & $1$ & $1$ & $1$ & $1$ & $1$ & $1$ & $1$ \\
			$\chi_1(j)$  & $1$ & $\omega$ & $\omega^2$ & $\omega^3$ & $\omega^4$ & $\omega^5$ & $\omega^6$ & $\omega^7$ & $\omega^8$ & $\omega^9$ & $\omega^{10}$ & $\omega^{11}$ & $\omega^{12}$ & $\omega^{13}$ & $\omega^{14}$ \\
			$\chi_2(j)$  & $1$ & $\omega^2$ & $\omega^4$ & $\omega^6$ & $\omega^8$ & $\omega^{10}$ & $\omega^{12}$ & $\omega^{14}$ & $\omega$ & $\omega^3$ & $\omega^5$ & $\omega^7$ & $\omega^9$ & $\omega^{11}$ & $\omega^{13}$ \\
			$\chi_3(j)$  & $1$ & $\omega^3$ & $\omega^6$ & $\omega^9$ & $\omega^{12}$ & $1$ & $\omega^3$ & $\omega^6$ & $\omega^9$ & $\omega^{12}$ & $1$ & $\omega^3$ & $\omega^6$ & $\omega^9$ & $\omega^{12}$ \\
			$\chi_4(j)$  & $1$ & $\omega^4$ & $\omega^8$ & $\omega^{12}$ & $\omega$ & $\omega^5$ & $\omega^9$ & $\omega^{13}$ & $\omega^2$ & $\omega^6$ & $\omega^{10}$ & $\omega^{14}$ & $\omega^3$ & $\omega^7$ & $\omega^{11}$ \\
			$\chi_5(j)$  & $1$ & $\omega^5$ & $\omega^{10}$ & $1$ & $\omega^5$ & $\omega^{10}$ & $1$ & $\omega^5$ & $\omega^{10}$ & $1$ & $\omega^5$ & $\omega^{10}$ & $1$ & $\omega^5$ & $\omega^{10}$ \\
			$\chi_6(j)$  & $1$ & $\omega^6$ & $\omega^{12}$ & $\omega^3$ & $\omega^9$ & $1$ & $\omega^6$ & $\omega^{12}$ & $\omega^3$ & $\omega^9$ & $1$ & $\omega^6$ & $\omega^{12}$ & $\omega^3$ & $\omega^9$ \\
			$\chi_7(j)$  & $1$ & $\omega^7$ & $\omega^{14}$ & $\omega^6$ & $\omega^{13}$ & $\omega^5$ & $\omega^{12}$ & $\omega^4$ & $\omega^{11}$ & $\omega^3$ & $\omega^{10}$ & $\omega^2$ & $\omega^9$ & $\omega$ & $\omega^8$ \\
			$\chi_8(j)$  & $1$ & $\omega^8$ & $\omega$ & $\omega^9$ & $\omega^2$ & $\omega^{10}$ & $\omega^3$ & $\omega^{11}$ & $\omega^4$ & $\omega^{12}$ & $\omega^5$ & $\omega^{13}$ & $\omega^6$ & $\omega^{14}$ & $\omega^7$ \\
			$\chi_9(j)$  & $1$ & $\omega^9$ & $\omega^3$ & $\omega^{12}$ & $\omega^6$ & $1$ & $\omega^9$ & $\omega^3$ & $\omega^{12}$ & $\omega^6$ & $1$ & $\omega^9$ & $\omega^3$ & $\omega^{12}$ & $\omega^6$ \\
			$\chi_{10}(j)$ & $1$ & $\omega^{10}$ & $\omega^5$ & $1$ & $\omega^{10}$ & $\omega^5$ & $1$ & $\omega^{10}$ & $\omega^5$ & $1$ & $\omega^{10}$ & $\omega^5$ & $1$ & $\omega^{10}$ & $\omega^5$ \\
			$\chi_{11}(j)$ & $1$ & $\omega^{11}$ & $\omega^7$ & $\omega^3$ & $\omega^{14}$ & $\omega^5$ & $\omega$ & $\omega^{12}$ & $\omega^8$ & $\omega^4$ & $\omega^{10}$ & $\omega^6$ & $\omega^2$ & $\omega^{13}$ & $\omega^9$ \\
			$\chi_{12}(j)$ & $1$ & $\omega^{12}$ & $\omega^9$ & $\omega^6$ & $\omega^3$ & $1$ & $\omega^{12}$ & $\omega^9$ & $\omega^6$ & $\omega^3$ & $1$ & $\omega^{12}$ & $\omega^9$ & $\omega^6$ & $\omega^3$ \\
			$\chi_{13}(j)$ & $1$ & $\omega^{13}$ & $\omega^{11}$ & $\omega^9$ & $\omega^7$ & $\omega^5$ & $\omega^3$ & $\omega$ & $\omega^{14}$ & $\omega^{12}$ & $\omega^{10}$ & $\omega^8$ & $\omega^6$ & $\omega^4$ & $\omega^2$ \\
			$\chi_{14}(j)$ & $1$ & $\omega^{14}$ & $\omega^{13}$ & $\omega^{12}$ & $\omega^{11}$ & $\omega^{10}$ & $\omega^9$ & $\omega^8$ & $\omega^7$ & $\omega^6$ & $\omega^5$ & $\omega^4$ & $\omega^3$ & $\omega^2$ & $\omega$ \\
			\hline
		\end{tabular}
		\caption{Character table of $\mathbb{Z}_{15}$.}
		\label{tab:caracteres_z15}
	\end{table}
	
Using this table, we compute $\chi_k(S)$ and $\chi_k(T)$ for $k=0,1,\dots,14$.

For $k=0$, we have that $\lambda_{\G}^\pm(\chi_0) = \chi_0(S) \pm \chi_0(T) = 2 \pm 4$ since
		\begin{align*}
			\chi_0(S) &= \chi_0(5)+\chi_0(10) = 1+1 = 2, \\
			\chi_0(T) &= \chi_0(3)+\chi_0(6)+\chi_0(9)+\chi_0(12) = 1+1+1+1 = 4. 
		\end{align*}

For $k=1$, we have that $\lambda_{\G}^\pm(\chi_1) = \chi_1(S) \pm \chi_1(T) = -1 \pm (-1)$ since
		\begin{align*}
			\chi_1(S) &= \chi_1(5)+\chi_1(10) = \omega^5 + \omega^{10} = -1, \\
			\chi_1(T) &= \chi_1(3)+\chi_1(6)+\chi_1(9)+\chi_1(12) = \omega^3 + \omega^6 + \omega^9 + \omega^{12} = -1.
		\end{align*}

For $k=2$, we have that $\lambda_{\G}^\pm(\chi_2) = \chi_2(S) \pm \chi_2(T) = -1 \pm (-1)$ since
		\begin{align*}
			\chi_2(S) &= \chi_2(5)+\chi_2(10) = \omega^{10} + \omega^5 = -1, \\
			\chi_2(T) &= \chi_2(3)+\chi_2(6)+\chi_2(9)+\chi_2(12) = \omega^6 + \omega^{12} + \omega^3 + \omega^9 = -1. 
		\end{align*}
 
For $k=3$, we have that $\lambda_{\G}^\pm(\chi_3) = \chi_3(S) \pm \chi_3(T) = 2 \pm (-1)$ since
		\begin{align*}
			\chi_3(S) &= \chi_3(5)+\chi_3(10) = 1+1 = 2, \\
			\chi_3(T) &= \chi_3(3)+\chi_3(6)+\chi_3(9)+\chi_3(12) = \omega^9 + \omega^3 + \omega^{12} + \omega^6 = -1.
		\end{align*}

For $k=4$, we have that $\lambda_{\G}^\pm(\chi_4) = \chi_4(S) \pm \chi_4(T) = -1 \pm (-1)$ since
		\begin{align*}
			\chi_4(S) &= \chi_4(5)+\chi_4(10) = \omega^5 + \omega^{10} = -1, \\
			\chi_4(T) &= \chi_4(3)+\chi_4(6)+\chi_4(9)+\chi_4(12) = \omega^{12} + \omega^9 + \omega^6 + \omega^3 = -1.
		\end{align*}
 
For $k=5$, we have that $\lambda_{\G}^\pm(\chi_5) = \chi_5(S) \pm \chi_5(T) = -1 \pm 4$ since
		\begin{align*}
			\chi_5(S) &= \chi_5(5)+\chi_5(10) = \omega^{10} + \omega^5 = -1, \\
			\chi_5(T) &= \chi_5(3)+\chi_5(6)+\chi_5(9)+\chi_5(12) = 1+1+1+1 = 4.
		\end{align*}
 
For $k=6$, we have that $\lambda_{\G}^\pm(\chi_6) = \chi_6(S) \pm \chi_6(T) = 2 \pm (-1)$ since
		\begin{align*}
			\chi_6(S) &= \chi_6(5)+\chi_6(10) = 1+1 = 2, \\
			\chi_6(T) &= \chi_6(3)+\chi_6(6)+\chi_6(9)+\chi_6(12) = \omega^3 + \omega^6 + \omega^9 + \omega^{12} = -1.
		\end{align*}
 
For $k=7$, we have that $\lambda_{\G}^\pm(\chi_7) = \chi_7(S) \pm \chi_7(T) = -1 \pm (-1)$ since
		\begin{align*}
			\chi_7(S) &= \chi_7(5)+\chi_7(10) = \omega^5 + \omega^{10} = -1, \\
			\chi_7(T) &= \chi_7(3)+\chi_7(6)+\chi_7(9)+\chi_7(12) = \omega^6 + \omega^{12} + \omega^3 + \omega^9 = -1.
		\end{align*}

For $k=8$, we have that $\lambda_{\G}^\pm(\chi_8) = \chi_8(S) \pm \chi_8(T) = -1 \pm (-1)$ since
		\begin{align*}
			\chi_8(S) &= \chi_8(5)+\chi_8(10) = \omega^{10} + \omega^5 = -1, \\
			\chi_8(T) &= \chi_8(3)+\chi_8(6)+\chi_8(9)+\chi_8(12) = \omega^9 + \omega^3 + \omega^{12} + \omega^6 = -1.
		\end{align*}

For $k=9$, we have that $\lambda_{\G}^\pm(\chi_9) = \chi_9(S) \pm \chi_9(T) = 2 \pm (-1)$ since
		\begin{align*}
			\chi_9(S) &= \chi_9(5)+\chi_9(10) = 1+1 = 2, \\
			\chi_9(T) &= \chi_9(3)+\chi_9(6)+\chi_9(9)+\chi_9(12) = \omega^{12} + \omega^9 + \omega^6 + \omega^3 = -1.
		\end{align*}

For $k=10$, we have that $\lambda_{\G}^\pm(\chi_{10}) = \chi_{10}(S) \pm \chi_{10}(T) = -1 \pm 4$ since
		\begin{align*}
			\chi_{10}(S) &= \chi_{10}(5)+\chi_{10}(10) = \omega^5 + \omega^{10} = -1, \\
			\chi_{10}(T) &= \chi_{10}(3)+\chi_{10}(6)+\chi_{10}(9)+\chi_{10}(12) = 1+1+1+1 = 4.
		\end{align*}
 
For $k=11$, we have that $\lambda_{\G}^\pm(\chi_{11}) = \chi_{11}(S) \pm \chi_{11}(T) = -1 \pm (-1)$ since
		\begin{align*}
			\chi_{11}(S) &= \chi_{11}(5)+\chi_{11}(10) = \omega^{10} + \omega^5 = -1, \\
			\chi_{11}(T) &= \chi_{11}(3)+\chi_{11}(6)+\chi_{11}(9)+\chi_{11}(12) = \omega^3 + \omega^6 + \omega^9 + \omega^{12} = -1.
		\end{align*}
 
For $k=12$, we have that $\lambda_{\G}^\pm(\chi_{12}) = \chi_{12}(S) \pm \chi_{12}(T) = 2 \pm (-1)$ since
		\begin{align*}
			\chi_{12}(S) &= \chi_{12}(5)+\chi_{12}(10) = 1+1 = 2, \\
			\chi_{12}(T) &= \chi_{12}(3)+\chi_{12}(6)+\chi_{12}(9)+\chi_{12}(12) = \omega^6 + \omega^{12} + \omega^3 + \omega^9 = -1.
		\end{align*}
 
For $k=13$, we have that $\lambda_{\G}^\pm(\chi_{13}) = \chi_{13}(S) \pm \chi_{13}(T) = -1 \pm (-1)$ since
		\begin{align*}
			\chi_{13}(S) &= \chi_{13}(5)+\chi_{13}(10) = \omega^5 + \omega^{10} = -1, \\
			\chi_{13}(T) &= \chi_{13}(3)+\chi_{13}(6)+\chi_{13}(9)+\chi_{13}(12) = \omega^9 + \omega^3 + \omega^{12} + \omega^6 = -1.
		\end{align*}
 
For $k=14$, we have that $\lambda_{\G}^\pm(\chi_{14}) = \chi_{14}(S) \pm \chi_{14}(T) = -1 \pm (-1)$ since
		\begin{align*}
			\chi_{14}(S) &= \chi_{14}(5)+\chi_{14}(10) = \omega^{10} + \omega^5 = -1, \\
			\chi_{14}(T) &= \chi_{14}(3)+\chi_{14}(6)+\chi_{14}(9)+\chi_{14}(12) = \omega^{12} + \omega^9 + \omega^6 + \omega^3 = -1.
		\end{align*}

Then, the spectrum of the mirror di-Cayley graph $\G_{di} = DX(G;S,S,T)$ is given by
	$$Spec(\G_{di})=\{[6]^1,[3]^{6},[1]^{4},[0]^{8},[-2]^{9},[-5]^{2}\}, $$
while the spectrum of the mirror di-Cayley sum graph $\G_{di}^+ = DX^+(\Z_{15};S,S,T)$ takes the form
	$$ Spec(\G_{di}^+) = \{[6]^1,[5]^1,[3]^3,[2]^4,[1]^2,[0]^8,[-1]^2,[-2]^5,[-3]^3,[-5]^1\}. $$
In this way all bi/di-Cayley (sum) graphs are integral.
\hfill $\diamond$
\end{exam}

\begin{exam}[\textit{Mirror, non-abelian}] \label{exam: 6.3}
Consider the dicyclic group  
	$$\mathbb{Q}_{12} = \langle a, b \mid a^6 = 1, b^2 = a^3, bab^{-1} = a^{-1} \rangle$$
of order $12$. Identifying $a^i$ with $(i,0)$ and $a^ib$ with $(i,1)$ for $i \in \{0, 1, 2, 3, 4, 5\}$, we write
	$a=(1,0)$ and $b=(0,1)$.
Let 
$$ S=\{(1,0),(2,0),(4,0),(5,0)\} 
 \qquad \text{and} \qquad  
T=\{(0,1),(2,1),(4,1)\}$$ 
be two subsets of $\mathbb {Q}_{12}$, and consider the associated mirror bi/di-Cayley (sum) graphs $\G_{bi}^* = BX^*(\mathbb{Q}_{12}; S, S, T)$ and $\G_{di}^* = DX^*(\mathbb{Q}_{12}; S, S, T)$.

The graphical representations of these graphs are as follows:

\begin{figure}[H]
	\centering
	\tikzset{
		cayley node/.style={circle, fill=black, inner sep=0pt, minimum size=1.6mm},
		edge S/.style={thick, color=black},
		edge T bi/.style={thick, color=purple},
		font label/.style={font=\tiny}
	}
	
	\begin{subfigure}[c]{0.65\textwidth}
		\centering
		\begin{tikzpicture}[scale=1, every node/.append style={transform shape}]
			\foreach \i in {0,...,5} {
				\node[cayley node, label={[font label]below:{$(\i,0)$}}] (A\i) at (\i*0.8, 0) {};
				\node[cayley node, label={[font label]below:{$(\i,1)$}}] (Ab\i) at (5.2 + \i*0.8, 0) {};
				\node[cayley node, label={[font label]above:{$(\i,0)$}}] (B\i) at (\i*0.8, 2.4) {};
				\node[cayley node, label={[font label]above:{$(\i,1)$}}] (Bb\i) at (5.2 + \i*0.8, 2.4) {};
			}
			
			\foreach \i in {1,2,4,5} {
				\draw[edge S] (A\i) to[out=230, in=310, looseness=5] (A\i);
			}
			\draw[edge S] (A0) to[bend right=25] (A1); \draw[edge S] (A0) to[bend right=30] (A2);
			\draw[edge S] (A0) to[bend right=35] (A4); \draw[edge S] (A0) to[bend right=40] (A5);
			\draw[edge S] (A1) to[bend right=25] (A3); \draw[edge S] (A1) to[bend right=30] (A4);
			\draw[edge S] (A2) to[bend right=25] (A3); \draw[edge S] (A2) to[bend right=35] (A5);
			\draw[edge S] (A3) to[bend right=25] (A4); \draw[edge S] (A3) to[bend right=30] (A5);
			
			\foreach \i/\j in {0/1, 1/2, 2/3, 3/4, 4/5} { \draw[edge S] (Ab\i) to[bend right=30] (Ab\j); }
			\foreach \i/\j in {0/2, 1/3, 2/4, 3/5} { \draw[edge S] (Ab\i) to[bend right=35] (Ab\j); }
			\draw[edge S] (Ab0) to[bend right=40] (Ab4); \draw[edge S] (Ab1) to[bend right=40] (Ab5);
			\draw[edge S] (Ab0) to[bend right=45] (Ab5);
			
			\foreach \i in {1,2,4,5} {
				\draw[edge S] (B\i) to[out=50, in=130, looseness=5] (B\i);
			}
			\draw[edge S] (B0) to[bend left=25] (B1); \draw[edge S] (B0) to[bend left=30] (B2);
			\draw[edge S] (B0) to[bend left=35] (B4); \draw[edge S] (B0) to[bend left=40] (B5);
			\draw[edge S] (B1) to[bend left=25] (B3); \draw[edge S] (B1) to[bend left=30] (B4);
			\draw[edge S] (B2) to[bend left=25] (B3); \draw[edge S] (B2) to[bend left=35] (B5);
			\draw[edge S] (B3) to[bend left=25] (B4); \draw[edge S] (B3) to[bend left=30] (B5);
			
			\foreach \i/\j in {0/1, 1/2, 2/3, 3/4, 4/5} { \draw[edge S] (Bb\i) to[bend left=30] (Bb\j); }
			\foreach \i/\j in {0/2, 1/3, 2/4, 3/5} { \draw[edge S] (Bb\i) to[bend left=35] (Bb\j); }
			\draw[edge S] (Bb0) to[bend left=40] (Bb4); \draw[edge S] (Bb1) to[bend left=40] (Bb5);
			\draw[edge S] (Bb0) to[bend left=45] (Bb5);
			
			\foreach \i in {0,...,5} {
				\foreach \k in {0,2,4} {
					\pgfmathtruncatemacro{\target}{mod(\k - \i + 12, 6)}
					\draw[edge T bi] (A\i) -- (Bb\target);
				}
			}
			\foreach \i in {0,...,5} {
				\foreach \k in {0,2,4} {
					\pgfmathtruncatemacro{\target}{mod(\i - \k + 12, 6)}
					\draw[edge T bi] (Ab\i) -- (B\target);
				}
			}
		\end{tikzpicture}
		\caption*{$BX^+(\mathbb{Q}_{12};S,S,T) = DX^+(\mathbb{Q}_{12};S,S,T)$.}
	\end{subfigure}%
	\hfill
	\begin{subfigure}[c]{0.35\textwidth}
		\centering
		\begin{tikzpicture}[scale=.9]
			\foreach \i in {0,...,5} {
				\pgfmathtruncatemacro{\angle}{90 - \i * 60}
				\node[cayley node, label={[font label]{\angle}:{$(\i,0)$}}] (In\i) at ({\angle}:0.8) {};
				\node[cayley node, label={[font label]{\angle}:{$(\i,1)$}}] (Out\i) at ({\angle}:2.2) {};
			}
			
			\foreach \i in {1,2,4,5} {
				\pgfmathtruncatemacro{\angle}{90 - \i * 60}
				\draw[edge S] (In\i) to[out=\angle-25, in=\angle+25, looseness=5] (In\i); 
			}
			
			\draw[edge S] (In0) -- (In1); \draw[edge S] (In0) -- (In2);
			\draw[edge S] (In0) -- (In4); \draw[edge S] (In0) -- (In5);
			\draw[edge S] (In1) -- (In3); \draw[edge S] (In1) -- (In4);
			\draw[edge S] (In2) -- (In3); \draw[edge S] (In2) -- (In5);
			\draw[edge S] (In3) -- (In4); \draw[edge S] (In3) -- (In5);
			
			\foreach \i/\j in {0/1, 1/2, 2/3, 3/4, 4/5, 5/0} {
				\draw[edge S] (Out\i) -- (Out\j);
			}
			\foreach \i/\j in {0/2, 1/3, 2/4, 3/5, 4/0, 5/1} {
				\draw[edge S] (Out\i) -- (Out\j);
			}
		\end{tikzpicture}
		\caption*{$X^+(\mathbb{Q}_{12},S)$.}
	\end{subfigure}
\end{figure}

\begin{figure}[H]
	\centering
	\tikzset{
		cayley node/.style={circle, fill=black, inner sep=0pt, minimum size=1.6mm},
		edge S/.style={thick, color=black},
		edge T/.style={thick, color=purple, ->, >=stealth},
		edge T bi/.style={thick, color=purple},
		font label/.style={font=\tiny}
	}
	
	\begin{subfigure}[c]{0.65 \textwidth}
		\centering
		\begin{tikzpicture}[scale=1, every node/.append style={transform shape}]
			\foreach \i in {0,...,5} {
				\node[cayley node, label={[font label]below:{$(\i,0)$}}] (P\i) at (\i*0.8, 0) {};
				\node[cayley node, label={[font label]below:{$(\i,1)$}}] (Pb\i) at (5.2 + \i*0.8, 0) {};
				\node[cayley node, label={[font label]above:{$(\i,0)$}}] (M\i) at (\i*0.8, 2.4) {};
				\node[cayley node, label={[font label]above:{$(\i,1)$}}] (Mb\i) at (5.2 + \i*0.8, 2.4) {};
			}
			
			\foreach \i/\j in {0/1, 1/2, 2/3, 3/4, 4/5} {
				\draw[edge S] (P\i) to[bend right=30] (P\j); \draw[edge S] (Pb\i) to[bend right=30] (Pb\j);
			}
			\foreach \i/\j in {0/2, 1/3, 2/4, 3/5} {
				\draw[edge S] (P\i) to[bend right=35] (P\j); \draw[edge S] (Pb\i) to[bend right=35] (Pb\j);
			}
			\draw[edge S] (P0) to[bend right=40] (P4); \draw[edge S] (Pb0) to[bend right=40] (Pb4);
			\draw[edge S] (P1) to[bend right=40] (P5); \draw[edge S] (Pb1) to[bend right=40] (Pb5);
			\draw[edge S] (P0) to[bend right=45] (P5); \draw[edge S] (Pb0) to[bend right=45] (Pb5);
			
			\foreach \i/\j in {0/1, 1/2, 2/3, 3/4, 4/5} {
				\draw[edge S] (M\i) to[bend left=30] (M\j); \draw[edge S] (Mb\i) to[bend left=30] (Mb\j);
			}
			\foreach \i/\j in {0/2, 1/3, 2/4, 3/5} {
				\draw[edge S] (M\i) to[bend left=35] (M\j); \draw[edge S] (Mb\i) to[bend left=35] (Mb\j);
			}
			\draw[edge S] (M0) to[bend left=40] (M4); \draw[edge S] (Mb0) to[bend left=40] (Mb4);
			\draw[edge S] (M1) to[bend left=40] (M5); \draw[edge S] (Mb1) to[bend left=40] (Mb5);
			\draw[edge S] (M0) to[bend left=45] (M5); \draw[edge S] (Mb0) to[bend left=45] (Mb5);
			
			\foreach \i in {0,...,5} {
				\foreach \k in {0,2,4} {
					\pgfmathtruncatemacro{\target}{mod(\i + \k, 6)}
					\draw[edge T] (P\i) -- (Mb\target);
					\draw[edge T] (M\i) -- (Pb\target);
				}
				\foreach \k in {0,2,4} {
					\pgfmathtruncatemacro{\target}{mod(\i - \k + 3 + 12, 6)}
					\draw[edge T] (Pb\i) -- (M\target);
					\draw[edge T] (Mb\i) -- (P\target);
				}
			}
		\end{tikzpicture}
		\caption*{$DX(\mathbb{Q}_{12};S,S,T)$.}
		
		\vspace{0.5cm}
		
		\begin{tikzpicture}[scale=1, every node/.append style={transform shape}]
			\foreach \i in {0,...,5} {
				\node[cayley node, label={[font label]below:{$(\i,0)$}}] (P\i) at (\i*0.8, 0) {};
				\node[cayley node, label={[font label]below:{$(\i,1)$}}] (Pb\i) at (5.2 + \i*0.8, 0) {};
				\node[cayley node, label={[font label]above:{$(\i,0)$}}] (M\i) at (\i*0.8, 2.4) {};
				\node[cayley node, label={[font label]above:{$(\i,1)$}}] (Mb\i) at (5.2 + \i*0.8, 2.4) {};
			}
			
			\foreach \i/\j in {0/1, 1/2, 2/3, 3/4, 4/5} {
				\draw[edge S] (P\i) to[bend right=30] (P\j); \draw[edge S] (Pb\i) to[bend right=30] (Pb\j);
			}
			\foreach \i/\j in {0/2, 1/3, 2/4, 3/5} {
				\draw[edge S] (P\i) to[bend right=35] (P\j); \draw[edge S] (Pb\i) to[bend right=35] (Pb\j);
			}
			\draw[edge S] (P0) to[bend right=40] (P4); \draw[edge S] (Pb0) to[bend right=40] (Pb4);
			\draw[edge S] (P1) to[bend right=40] (P5); \draw[edge S] (Pb1) to[bend right=40] (Pb5);
			\draw[edge S] (P0) to[bend right=45] (P5); \draw[edge S] (Pb0) to[bend right=45] (Pb5);
			
			\foreach \i/\j in {0/1, 1/2, 2/3, 3/4, 4/5} {
				\draw[edge S] (M\i) to[bend left=30] (M\j); \draw[edge S] (Mb\i) to[bend left=30] (Mb\j);
			}
			\foreach \i/\j in {0/2, 1/3, 2/4, 3/5} {
				\draw[edge S] (M\i) to[bend left=35] (M\j); \draw[edge S] (Mb\i) to[bend left=35] (Mb\j);
			}
			\draw[edge S] (M0) to[bend left=40] (M4); \draw[edge S] (Mb0) to[bend left=40] (Mb4);
			\draw[edge S] (M1) to[bend left=40] (M5); \draw[edge S] (Mb1) to[bend left=40] (Mb5);
			\draw[edge S] (M0) to[bend left=45] (M5); \draw[edge S] (Mb0) to[bend left=45] (Mb5);
			
			\foreach \i in {0,...,5} {
				\foreach \k in {0,2,4} {
					\pgfmathtruncatemacro{\target}{mod(\i + \k, 6)}
					\draw[edge T bi] (P\i) -- (Mb\target);
				}
				\foreach \k in {0,2,4} {
					\pgfmathtruncatemacro{\target}{mod(\i - \k + 3 + 12, 6)}
					\draw[edge T bi] (Pb\i) -- (M\target);
				}
			}
		\end{tikzpicture}
		\caption*{$BX(\mathbb{Q}_{12};S,S,T)$.}
	\end{subfigure}%
	\hfill
	\begin{subfigure}[c]{0.35\textwidth}
		\centering
		\begin{tikzpicture}[scale=.9]
			\foreach \i in {0,...,5} {
				\pgfmathtruncatemacro{\angle}{90 - \i * 60}
				\node[cayley node, label={[font label]{\angle}:{$(\i,0)$}}] (In\i) at ({\angle}:0.8) {};
				\node[cayley node, label={[font label]{\angle}:{$(\i,1)$}}] (Out\i) at ({\angle}:2.2) {};
			}
			
			\foreach \i/\j in {0/1, 1/2, 2/3, 3/4, 4/5, 5/0} {
				\draw[edge S] (In\i) -- (In\j);
				\draw[edge S] (Out\i) -- (Out\j);
			}
			
			\foreach \i/\j in {0/2, 1/3, 2/4, 3/5, 4/0, 5/1} {
				\draw[edge S] (In\i) -- (In\j);
				\draw[edge S] (Out\i) -- (Out\j);
			}
		\end{tikzpicture}
		\caption*{$X(\mathbb{Q}_{12},S)$.}
	\end{subfigure}
\end{figure}

Notice that the covers are disconnected, $X(\Q_{12},S)$ being two isomorphic copies of the same graph, while $X^+(\Q_{12},S)$ has two non-isomorphic components.

We now present the irreducible representations of $\mathbb{Q}_{12}$. The group has four 1-dimensional representations ($\rho_1, \rho_2, \rho_3, \rho_4$) and two 2-dimensional representations ($\rho_5, \rho_6$), which are defined in the generators $a$ and $b$, where $\omega = e^{\frac{\pi i}{3}}$, as follows:
	
	\begin{table}[H]
		\centering
		\label{tab:rep_q12}
		\begin{tabular}{ccc}
			\hline
			Representation & $\rho_k(a)$ & $\rho_k(b)$ \\ \hline
			$\rho_1$ & $1$ & $1$ \\[1mm]
			$\rho_2$ & $1$ & $-1$ \\[1mm]
			$\rho_3$ & $-1$ & $i$ \\[1mm]
			$\rho_4$ & $-1$ & $-i$ \\[1mm]
			$\rho_5$ & $\big( \begin{smallmatrix} \omega & 0 \\ 0 & \omega^5 \end{smallmatrix} \big)$ & $\big( \begin{smallmatrix} 0 & 1 \\ -1 & 0 \end{smallmatrix} \big)$ \\[1mm]
			$\rho_6$ & $(\begin{smallmatrix} \omega^2 & 0 \\ 0 & \omega^4 \end{smallmatrix})$ & $(\begin{smallmatrix} 0 & 1 \\ 1 & 0 \end{smallmatrix})$ \\ \hline
		\end{tabular}
		\caption{Irreducible representations of the dicyclic group $\mathbb{Q}_{12}$.}
	\end{table}			
Using this character table and matrix representations, we compute $\rho_k(S) = \sum_{s \in S} \rho_k(s)$ and $\rho_k(T) = \sum_{t \in T} \rho_k(t)$ for each $k \in \{1, \ldots, 6\}$:
			
$\bullet$ For 1-dimensional representations, the local eigenvalues are given by $\rho_k(S) \pm \rho_k(T)$ for $DX$, and $\rho_k(S) \pm |\rho_k(T)|$ for $BX$:

For $k=1$: $\rho_1(S) = 4$ and $\rho_1(T) = 3$. 
		Thus, $\lambda^{\pm}(\rho_1) = 4 \pm 3 \in \{7, 1\}$.
					
For $k=2$: since $\rho_2(S) = 4$ and $\rho_2(T) = -3$, we have that $\lambda^{\pm}(\rho_2) = 4 \pm (-3) \in \{7, 1\}$.
					
For $k=3$: we have that $\rho_3(S) = \rho_3(a) + \rho_3(a^2) + \rho_3(a^4) + \rho_3(a^5) = -1 + 1 + 1 - 1 = 0$, and also 
	$\rho_3(T) = \rho_3(b) + \rho_3(a^2b) + \rho_3(a^4b) = i + i + i = 3i$. 
		Thus, $\lambda_{\G_{di}}^{\pm}(\rho_3) = 0 \pm 3i = \pm 3i$, and $\lambda_{\G_{bi}}^{\pm}(\rho_3) = 0 \pm |3i| = \pm 3$.
					
For $k=4$: $\rho_4(S) = 0$ and $\rho_4(T) = -3i$. 
	Thus, $\lambda_{\G_{di}}^{\pm}(\rho_4) = \pm 3i$, and $\lambda_{\G_{bi}}^{\pm}(\rho_4) = \pm 3$.
				
$\bullet$ For 2-dimensional representations ($d_k = 2$), the local spectrum is determined by the $4 \times 4$ block matrix
	$$ M_k = \begin{pmatrix} \rho_k(S) & \rho_k(T) \\ \rho_k(T)^* & \rho_k(S) \end{pmatrix},$$
where each eigenvalue of $M_k$ has multiplicity $d_k = 2$:

For $k=5$ we have: 
		\begin{align*}
			\rho_5(S) & = \big( \begin{smallmatrix} \omega + \omega^2 + \omega^4 + \omega^5 & 0 \\ 0 & \omega^5 + \omega^4 + \omega^2 + \omega \end{smallmatrix}\big)  = \big( \begin{smallmatrix} 0 & 0 \\ 0 & 0 \end{smallmatrix}\big), \\
			\rho_5(T) & = (I + \rho_5(a)^2 + \rho_5(a)^4) \rho_5(b) = \big( \begin{smallmatrix} 1 + \omega^2 + \omega^4 & 0 \\ 0 & 1 + \omega^4 + \omega^2 \end{smallmatrix}\big) \big( \begin{smallmatrix} 0 & 1 \\ -1 & 0 \end{smallmatrix} \big) = \big( \begin{smallmatrix} 0 & 0 \\ 0 & 0 \end{smallmatrix} \big).
		\end{align*}
Consequently, $M_5 = \mathbf{0}_{4 \times 4}$, contributing the eigenvalue $0$ with multiplicity $d_5= 2 \times 4 = 8$.
					

For $k=6$ we have
\begin{align*}
	& \rho_6(S) = \big( \begin{smallmatrix} \omega^2 + \omega^4 + \omega^2 + \omega^4 & 0 \\ 0 & \omega^4 + \omega^2 + \omega^4 + \omega^2 \end{smallmatrix} \big) = \big( \begin{smallmatrix} -2 & 0 \\ 0 & -2 \end{smallmatrix} \big), \\
	& \rho_6(T) = (I + \rho_6(a)^2 + \rho_6(a)^4)\rho_6(b) = \big( \begin{smallmatrix} 1 + \omega^4 + \omega^2 & 0 \\ 0 & 1 + \omega^2 + \omega^4 \end{smallmatrix} \big) \big( \begin{smallmatrix} 0 & 1 \\ 1 & 0 \end{smallmatrix} \big) = \big( \begin{smallmatrix} 0 & 0 \\ 0 & 0 \end{smallmatrix} \big). 	
\end{align*}
Thus, the $4 \times 4$ block matrix $M_6 = \big( \begin{smallmatrix} -2I & \mathbf{0} \\ \mathbf{0} & -2I \end{smallmatrix} \big)$ yields the eigenvalue $-2$ with multiplicity $4$. Since $d_6 = 2$, this contributes the eigenvalue $-2$ with a total multiplicity of $8$ to the spectrum of the graph.
			
%
%

Putting together all this information, we have that the spectrum of the mirror di-Cayley graph $\G_{di} = DX(\mathbb{Q}_{12}; S, S, T)$ is given by
$$ \operatorname{Spec}(\G_{di}) = \left\{ [7]^2, [1]^2, [0]^8, [-2]^8, [3i]^2, [-3i]^2 \right\}. $$
That is, $\G_{di}$ is a non-abelian mirror di-Cayley graph with spectrum $\operatorname{Spec}(\G_{di}) \subset \mathbb{Z}[i]$.

In contrast, for the mirror bi-Cayley graph $\G_{bi} = BX(\mathbb{Q}_{12}; S, S, T)$, evaluating the corresponding expressions yields the spectrum
$$ \operatorname{Spec}(\G_{bi}) = \left\{ [7]^2, [3]^2, [1]^2, [0]^8, [-2]^8, [-3]^2 \right\}. $$

Finally, we compute the spectrum of the mirror di-Cayley sum graph $\G_{di}^+ = DX^+(\mathbb{Q}_{12}; S, S, T)$ correctly accounting for the inversion operator on the representations, and we obtain
$$ \operatorname{Spec}(\G_{di}^+) = \left\{ [7]^2, [3]^2, [2]^2, [1]^2, [0]^8, [-2]^6, [-3]^2 \right\}. $$
We check that the sum of the eigenvalues is 8 which equals the number of loops.
So, $\G_{di}$ is Gaussian integral while $\G_{bi}$ and $\G_{di}^+$ are integral graphs.
\hfill $\diamond$
\end{exam}

To finish this work, we now consider three non-mirror bi/di-Cayley (sum) graphs.

\begin{exam}[\textit{Non-mirror, abelian}] \label{exam: 6.4}
Let $G = \mathbb{Z}_4 \times \mathbb{Z}_2$ and the subsets
$$ S_\ell = \{(0,1), (1,1), (3,0)\}, \quad S_r = \{(0,1), (1,0), (3,1)\}, \quad S_m = \{(0,1), (1,0), (1,1), (2,0)\}, $$
and consider the associated bi/di-Cayley (sum) graphs $\G_{bi}^*=BX^*(\Z_4\times\Z_2;S_\ell,S_r,S_m)$ and $\G_{di}^*=DX^*(\Z_4\times\Z_2;S_\ell,S_r,S_m)$.

The graphical representations of these graphs are as follows:
\tikzset{
	cnode/.style={circle, fill=black, inner sep=0pt, minimum size=1.8mm},
	eS/.style={thick, black, ->, >=stealth},
	eS-bi/.style={thick, black},
	eT/.style={thick, purple, ->, >=stealth},
	eT-bi/.style={thick, purple},
	flbl/.style={font=\tiny}
}
\begin{figure}[H]
	\centering
	\begin{subfigure}[c]{0.65\textwidth}
		\centering
	\begin{tikzpicture}[scale=.9]
		\def\xs{1.2}\def\ys{1.8}
		\foreach \x in {0,...,3} \foreach \y in {0,1} {
			\pgfmathtruncatemacro{\i}{\x + 4*\y}
			\node[cnode, label={[flbl]above:{$(\x,\y)_1$}}] (B\x\y) at (\i*\xs, \ys) {};
			\node[cnode, label={[flbl]below:{$(\x,\y)_0$}}] (A\x\y) at (\i*\xs, 0) {};
		}
		\foreach \x in {0,...,3} \foreach \y in {0,1} {
			\pgfmathtruncatemacro{\ny}{1-\y}
			
			\draw[eS-bi] (A\x\y) to[out=-90, in=-90, looseness=0.4] (A\x\ny);
			\pgfmathtruncatemacro{\vx}{mod(\x+1,4)}
			\draw[eS] (A\x\y) to[out=-100, in=-80, looseness=0.5] (A\vx\ny);
			\pgfmathtruncatemacro{\vx}{mod(\x+3,4)}
			\draw[eS] (A\x\y) to[out=-110, in=-70, looseness=0.6] (A\vx\y);
			
			\draw[eS-bi] (B\x\y) to[out=90, in=90, looseness=0.4] (B\x\ny);
			\pgfmathtruncatemacro{\vx}{mod(\x+1,4)}
			\draw[eS] (B\x\y) to[out=80, in=100, looseness=0.5] (B\vx\y);
			\pgfmathtruncatemacro{\vx}{mod(\x+3,4)}
			\draw[eS] (B\x\y) to[out=70, in=110, looseness=0.6] (B\vx\ny);
			
			\foreach \sx/\sy in {0/1, 1/0, 1/1, 2/0} {
				\pgfmathtruncatemacro{\vx}{mod(\x+\sx,4)}\pgfmathtruncatemacro{\vy}{mod(\y+\sy,2)}
				\draw[eT] (A\x\y) -- (B\vx\vy);
				\draw[eT] (B\x\y) -- (A\vx\vy);
			}
		}
	\end{tikzpicture}
		\caption*{$DX(\Z_4\times\Z_2; S_\ell, S_r, S_m)$}
		\vspace{0.5cm}
		
		\begin{tikzpicture}[scale=.9]
			\def\xs{1.2}\def\ys{1.8}
			\foreach \x in {0,...,3} \foreach \y in {0,1} {
				\pgfmathtruncatemacro{\i}{\x + 4*\y}
				\node[cnode, label={[flbl]above:{$(\x,\y)_1$}}] (B\x\y) at (\i*\xs, \ys) {};
				\node[cnode, label={[flbl]below:{$(\x,\y)_0$}}] (A\x\y) at (\i*\xs, 0) {};
			}
			\foreach \x in {0,...,3} \foreach \y in {0,1} {
				\pgfmathtruncatemacro{\ny}{1-\y}
				
				\draw[eS-bi] (A\x\y) to[out=-90, in=-90, looseness=0.4] (A\x\ny);
				\pgfmathtruncatemacro{\vx}{mod(\x+1,4)}
				\draw[eS] (A\x\y) to[out=-100, in=-80, looseness=0.5] (A\vx\ny);
				\pgfmathtruncatemacro{\vx}{mod(\x+3,4)}
				\draw[eS] (A\x\y) to[out=-110, in=-70, looseness=0.6] (A\vx\y);
				
				\draw[eS-bi] (B\x\y) to[out=90, in=90, looseness=0.4] (B\x\ny);
				\pgfmathtruncatemacro{\vx}{mod(\x+1,4)}
				\draw[eS] (B\x\y) to[out=80, in=100, looseness=0.5] (B\vx\y);
				\pgfmathtruncatemacro{\vx}{mod(\x+3,4)}
				\draw[eS] (B\x\y) to[out=70, in=110, looseness=0.6] (B\vx\ny);
				
				\foreach \sx/\sy in {0/1, 1/0, 1/1, 2/0} {
					\pgfmathtruncatemacro{\vx}{mod(\x+\sx,4)}\pgfmathtruncatemacro{\vy}{mod(\y+\sy,2)}
					\draw[eT-bi] (A\x\y) -- (B\vx\vy);
				}
			}
		\end{tikzpicture}
		\caption*{$BX(\Z_4\times\Z_2; S_\ell, S_r, S_m)$}
	\end{subfigure}%
	\hfill
	\begin{subfigure}[c]{0.35\textwidth}
		\centering
				\begin{tikzpicture}[scale=1]
			\foreach \x in {0,...,3} \foreach \y in {0,1} {
				\pgfmathtruncatemacro{\k}{\x + 4*\y}
				\node[cnode, label={[flbl]{90-\k*45}:{$(\x,\y)$}}] (M\x\y) at ({90-\k*45}: 1.6cm) {};
			}
			\foreach \x in {0,...,3} \foreach \y in {0,1} {
				\pgfmathtruncatemacro{\ny}{1-\y}
				\ifnum\y=0 \draw[eS-bi] (M\x\y) -- (M\x\ny); \fi
				\pgfmathtruncatemacro{\vx}{mod(\x+1,4)}\draw[eS] (M\x\y) to[bend left=15] (M\vx\y);
				\pgfmathtruncatemacro{\vx}{mod(\x+3,4)}\draw[eS] (M\x\y) to[bend right=15] (M\vx\ny);
			}
		\end{tikzpicture}
		\caption*{$X(\Z_4\times\Z_2, S_r)$}
		
		\vspace{0.5cm}

		\begin{tikzpicture}[scale=1]
	\foreach \x in {0,...,3} \foreach \y in {0,1} {
		\pgfmathtruncatemacro{\k}{\x + 4*\y}
		\node[cnode, label={[flbl]{90-\k*45}:{$(\x,\y)$}}] (N\x\y) at ({90-\k*45}: 1.6cm) {};
	}
	\foreach \x in {0,...,3} \foreach \y in {0,1} {
		\pgfmathtruncatemacro{\ny}{1-\y}
		\ifnum\y=0 \draw[eS-bi] (N\x\y) -- (N\x\ny); \fi
		\pgfmathtruncatemacro{\vx}{mod(\x+1,4)}\draw[eS] (N\x\y) to[bend left=15] (N\vx\ny);
		\pgfmathtruncatemacro{\vx}{mod(\x+3,4)}\draw[eS] (N\x\y) to[bend right=15] (N\vx\y);
		}
		\end{tikzpicture}
		\caption*{$X(\Z_4\times\Z_2, S_\ell)$}
	\end{subfigure}
\end{figure}

\begin{figure}[H]
	\centering
	\begin{subfigure}[c]{0.7\textwidth}
		\centering
		\begin{tikzpicture}[scale=1]
			\def\xs{1.2}\def\ys{1.8}
			\foreach \x in {0,...,3} \foreach \y in {0,1} {
				\pgfmathtruncatemacro{\i}{\x + 4*\y}
				\node[cnode, label={[flbl]above:{$(\x,\y)_1$}}] (B\x\y) at (\i*\xs, \ys) {};
				\node[cnode, label={[flbl]below:{$(\x,\y)_0$}}] (A\x\y) at (\i*\xs, 0) {};
			}
			\foreach \x in {0,...,3} \foreach \y in {0,1} {
				\pgfmathtruncatemacro{\u}{\x + 4*\y}
				\foreach \sx/\sy in {0/1, 1/1, 3/0} {
					\pgfmathtruncatemacro{\vx}{mod(\sx-\x+4,4)}\pgfmathtruncatemacro{\vy}{mod(\sy-\y+2,2)}
					\pgfmathtruncatemacro{\v}{\vx + 4*\vy}
					\ifnum\u<\v \draw[eS-bi] (A\x\y) to[bend right=25] (A\vx\vy); \fi
					\ifnum\u=\v \draw[eS-bi] (A\x\y) to[out=-120,in=-60,looseness=6] (A\x\y); \fi
				}
				\foreach \sx/\sy in {0/1, 1/0, 3/1} {
					\pgfmathtruncatemacro{\vx}{mod(\sx-\x+4,4)}\pgfmathtruncatemacro{\vy}{mod(\sy-\y+2,2)}
					\pgfmathtruncatemacro{\v}{\vx + 4*\vy}
					\ifnum\u<\v \draw[eS-bi] (B\x\y) to[bend left=25] (B\vx\vy); \fi
					\ifnum\u=\v \draw[eS-bi] (B\x\y) to[out=60,in=120,looseness=6] (B\x\y); \fi
				}
				\foreach \sx/\sy in {0/1, 1/0, 1/1, 2/0} {
					\pgfmathtruncatemacro{\vx}{mod(\sx-\x+4,4)}\pgfmathtruncatemacro{\vy}{mod(\sy-\y+2,2)}
					\draw[eT-bi] (A\x\y) -- (B\vx\vy);
				}
			}
		\end{tikzpicture}
		\caption*{$DX^+(\Z_4\times\Z_2; S_\ell, S_r, S_m) = BX^+(G; S_\ell, S_r, S_m)$}
		\vspace{0.5cm}
		
	\end{subfigure}%
	\hfill
	\begin{subfigure}[c]{0.3\textwidth}
		\centering
		\begin{tikzpicture}[scale=0.825]
			\foreach \x in {0,...,3} \foreach \y in {0,1} {
				\pgfmathtruncatemacro{\k}{\x + 4*\y}
				\node[cnode, label={[flbl]{90-\k*45}:{$(\x,\y)$}}] (M\x\y) at ({90-\k*45}: 1.6cm) {};
			}
			\foreach \x in {0,...,3} \foreach \y in {0,1} {
				\pgfmathtruncatemacro{\u}{\x + 4*\y}
				\foreach \sx/\sy in {0/1, 1/0, 3/1} {
					\pgfmathtruncatemacro{\vx}{mod(\sx-\x+4,4)}\pgfmathtruncatemacro{\vy}{mod(\sy-\y+2,2)}
					\pgfmathtruncatemacro{\v}{\vx + 4*\vy}
					\ifnum\u<\v \draw[eS-bi] (M\x\y) to[bend left=15] (M\vx\vy); \fi
					\ifnum\u=\v \draw[eS-bi] (M\x\y) to[out=90-\u*45-30,in=90-\u*45+30,looseness=6] (M\x\y); \fi
				}
			}
		\end{tikzpicture}
		\caption*{$X^+(\Z_4\times\Z_2, S_r)$}
		
		\vspace{0.5cm}

		\begin{tikzpicture}[scale=0.85]
	\foreach \x in {0,...,3} \foreach \y in {0,1} {
		\pgfmathtruncatemacro{\k}{\x + 4*\y}
		\node[cnode, label={[flbl]{90-\k*45}:{$(\x,\y)$}}] (N\x\y) at ({90-\k*45}: 1.6cm) {};
	}
	\foreach \x in {0,...,3} \foreach \y in {0,1} {
		\pgfmathtruncatemacro{\u}{\x + 4*\y}
		\foreach \sx/\sy in {0/1, 1/1, 3/0} {
			\pgfmathtruncatemacro{\vx}{mod(\sx-\x+4,4)}\pgfmathtruncatemacro{\vy}{mod(\sy-\y+2,2)}
			\pgfmathtruncatemacro{\v}{\vx + 4*\vy}
			\ifnum\u<\v \draw[eS-bi] (N\x\y) to[bend left=15] (N\vx\vy); \fi
			\ifnum\u=\v \draw[eS-bi] (N\x\y) to[out=90-\u*45-30,in=90-\u*45+30,looseness=6] (N\x\y); \fi
			}
		}
	\end{tikzpicture}
		\caption*{$X^+(\Z_4\times\Z_2, S_\ell)$}
	\end{subfigure}
\end{figure}

The irreducible characters of the group $G$ are given by
$$\chi_{(u,v)}(x,y) = i^{ux}(-1)^{vy}, \qquad u\in\{0,1,2,3\} \quad \text{and} \quad v\in\{0,1\}.$$

We have the following character table of the group.

\begin{table}[H]
	\centering
	\label{tab:caracteres_z4_z2}
	{\small 
		\renewcommand{\arraystretch}{1.2} 
	\begin{tabular}{c|cccccccc}
		\hline\hline
		$(x,y)$ & $(0,0)$ & $(1,0)$ & $(2,0)$ & $(3,0)$ & $(0,1)$ & $(1,1)$ & $(2,1)$ & $(3,1)$ \\ \hline
		$\chi_{(0,0)}$ & 1 & 1 & 1 & 1 & 1 & 1 & 1 & 1 \\
		$\chi_{(1,0)}$ & 1 & $i$ & $-1$ & $-i$ & 1 & $i$ & $-1$ & $-i$ \\
		$\chi_{(2,0)}$ & 1 & $-1$ & 1 & $-1$ & 1 & $-1$ & 1 & $-1$ \\
		$\chi_{(3,0)}$ & 1 & $-i$ & $-1$ & $i$ & 1 & $-i$ & $-1$ & $i$ \\ \hline
		$\chi_{(0,1)}$ & 1 & 1 & 1 & 1 & $-1$ & $-1$ & $-1$ & $-1$ \\
		$\chi_{(1,1)}$ & 1 & $i$ & $-1$ & $-i$ & $-1$ & $-i$ & 1 & $i$ \\
		$\chi_{(2,1)}$ & 1 & $-1$ & 1 & $-1$ & $-1$ & 1 & $-1$ & 1 \\
		$\chi_{(3,1)}$ & 1 & $-i$ & $-1$ & $i$ & $-1$ & $i$ & 1 & $-i$ \\ \hline\hline
	\end{tabular}}
	\caption{Character table of the group $G = \mathbb{Z}_4 \times \mathbb{Z}_2$.}
\end{table}

Now, we compute $\chi_{(u,v)}(S_\ell)$, $\chi_{(u,v)}(S_r)$ and $\chi_{(u,v)}(S_m)$ for $u=0,1,2,3$ and $v=0,1$.

For $u=0$ and $v=0$, we have that
	\begin{align*}
		\chi_{(0,0)}(S_\ell) &= \chi_{(0,0)}(0,1)+\chi_{(0,0)}(1,1)+\chi_{(0,0)}(3,0) = 1+1+1 = 3,\\
		\chi_{(0,0)}(S_r) &= \chi_{(0,0)}(0,1)+\chi_{(0,0)}(1,0)+\chi_{(0,0)}(3,1) = 1+1+1 = 3,\\
		\chi_{(0,0)}(S_m) &= \chi_{(0,0)}(0,1)+\chi_{(0,0)}(1,0)+\chi_{(0,0)}(1,1)+\chi_{(0,0)}(2,0) = 1+1+1+1 =4.
	\end{align*}	
	Then, $\lambda_{DX}^{\pm}(\chi_{0,0}) = \frac{3+3}{2} \pm \sqrt{\left(\frac{3-3}{2}\right)^2 + 4^2} = 3 \pm \sqrt{0 + 16} = 3 \pm 4$. 
	
For $u=1$ and $v=0$, we have that
	\begin{align*}
		\chi_{(1,0)}(S_\ell) &= \chi_{(1,0)}(0,1)+\chi_{(1,0)}(1,1)+\chi_{(1,0)}(3,0) = 1+i-i = 1,\\
		\chi_{(1,0)}(S_r) &= \chi_{(1,0)}(0,1)+\chi_{(1,0)}(1,0)+\chi_{(1,0)}(3,1) = 1+i-i = 1,\\
		\chi_{(1,0)}(S_m) &= \chi_{(1,0)}(0,1)+\chi_{(1,0)}(1,0)+\chi_{(1,0)}(1,1)+\chi_{(1,0)}(2,0) = 1+i+i-1 =2i.
	\end{align*}	
	Then, $\lambda_{DX}^{\pm}(\chi_{0,0}) = \frac{1+1}{2} \pm \sqrt{0 + (2i)^2} = 1 \pm \sqrt{-4} = 1 \pm 2i$.
	
For $u=2$ and $v=0$, we have that
	\begin{align*}
		\chi_{(2,0)}(S_\ell) &= \chi_{(2,0)}(0,1)+\chi_{(2,0)}(1,1)+\chi_{(2,0)}(3,0) = 1-1-1 = -1,\\
		\chi_{(2,0)}(S_r) &= \chi_{(2,0)}(0,1)+\chi_{(2,0)}(1,0)+\chi_{(2,0)}(3,1) = 1-1-1 = -1,\\
		\chi_{(2,0)}(S_m) &= \chi_{(2,0)}(0,1)+\chi_{(2,0)}(1,0)+\chi_{(2,0)}(1,1)+\chi_{(2,0)}(2,0) = 1-1-1+1 =0.
	\end{align*}	
	Then, $\lambda_{DX}^{\pm}(\chi_{0,0}) = \frac{-1-1}{2} \pm \sqrt{0 + 0^2} = -1$, with multiplicity $2$.
	
For $u=3$ and $v=0$, we have that
	\begin{align*}
		\chi_{(3,0)}(S_\ell) &= \chi_{(3,0)}(0,1)+\chi_{(3,0)}(1,1)+\chi_{(3,0)}(3,0) = 1-i+i = 1,\\
		\chi_{(3,0)}(S_r) &= \chi_{(3,0)}(0,1)+\chi_{(3,0)}(1,0)+\chi_{(3,0)}(3,1) = 1-i+i = 1,\\
		\chi_{(3,0)}(S_m) &= \chi_{(3,0)}(0,1)+\chi_{(3,0)}(1,0)+\chi_{(3,0)}(1,1)+\chi_{(3,0)}(2,0) = 1-i-i-1 =-2i.
	\end{align*}	
	Then, $\lambda_{DX}^{\pm}(\chi_{0,0}) = \frac{1+1}{2} \pm \sqrt{0 + (-2i)^2} = 1 \pm \sqrt{-4} = 1 \pm 2i$.
	
For $u=0$ and $v=1$, we have that
	\begin{align*}
		\chi_{(0,1)}(S_\ell) &= \chi_{(0,1)}(0,1)+\chi_{(0,1)}(1,1)+\chi_{(0,1)}(3,0) = -1-1+1 = -1,\\
		\chi_{(0,1)}(S_r) &= \chi_{(0,1)}(0,1)+\chi_{(0,1)}(1,0)+\chi_{(0,1)}(3,1) = -1+1-1 = -1,\\
		\chi_{(0,1)}(S_m) &= \chi_{(0,1)}(0,1)+\chi_{(0,1)}(1,0)+\chi_{(0,1)}(1,1)+\chi_{(0,1)}(2,0) = -1+1-1+1 =0.
	\end{align*}	
Then, $\lambda_{DX}^{\pm}(\chi_{0,0}) = \frac{-1-1}{2} \pm \sqrt{0 + 0^2} = -1$ with multiplicity $2$.
	
For $u=1$ and $v=1$, we have that
	\begin{align*}
		\chi_{(1,1)}(S_\ell) &= \chi_{(1,1)}(0,1)+\chi_{(1,1)}(1,1)+\chi_{(1,1)}(3,0) = -1-i-i = -1-2i,\\
		\chi_{(1,1)}(S_r) &= \chi_{(1,1)}(0,1)+\chi_{(1,1)}(1,0)+\chi_{(1,1)}(3,1) = -1+i+i = -1+2i,\\
		\chi_{(1,1)}(S_m) &= \chi_{(1,1)}(0,1)+\chi_{(1,1)}(1,0)+\chi_{(1,1)}(1,1)+\chi_{(1,1)}(2,0) = -1+i-i-1 =-2.
	\end{align*}	
	Then, $\lambda_{DX}^{\pm}(\chi_{0,0}) = \frac{(-1-2i)+(-1+2i)}{2} \pm \sqrt{(-2i)^2+(-2)^2} = -1$ with multiplicity $2$.
	
For $u=2$ and $v=1$, we have that
	\begin{align*}
		\chi_{(2,1)}(S_\ell) &= \chi_{(2,1)}(0,1)+\chi_{(2,1)}(1,1)+\chi_{(2,1)}(3,0) = -1+1-2 = -1,\\
		\chi_{(2,1)}(S_r) &= \chi_{(2,1)}(0,1)+\chi_{(2,1)}(1,0)+\chi_{(2,1)}(3,1) = -1-1+1 = -1,\\
		\chi_{(2,1)}(S_m) &= \chi_{(2,1)}(0,1)+\chi_{(2,1)}(1,0)+\chi_{(2,1)}(1,1)+\chi_{(2,1)}(2,0) = -1-1+1+1=0.
	\end{align*}	
	Then, $\lambda_{DX}^{\pm}(\chi_{0,0}) = \frac{-1-1}{2} \pm \sqrt{0 + 0^2} = -1$ with multiplicity $2$.
	
For $u=3$ and $v=1$, we have that
	\begin{align*}
		\chi_{(3,1)}(S_\ell) &= \chi_{(3,1)}(0,1)+\chi_{(3,1)}(1,1)+\chi_{(3,1)}(3,0) = -1+i+i = -1+2i,\\
		\chi_{(3,1)}(S_r) &= \chi_{(3,1)}(0,1)+\chi_{(3,1)}(1,0)+\chi_{(3,1)}(3,1) = -1-i-i = -1-2i,\\
		\chi_{(3,1)}(S_m) &= \chi_{(3,1)}(0,1)+\chi_{(3,1)}(1,0)+\chi_{(3,1)}(1,1)+\chi_{(3,1)}(2,0) = -1-i+i-1=-2.
	\end{align*}	
	Then, $\lambda_{DX}^{\pm}(\chi_{0,0}) = \frac{(-1+2i)+(-1-2i)}{2} \pm \sqrt{(2i)^2 + (-2)^2} = -1$, with multiplicity $2$.

Hence, the spectrum of the di-Cayley graph $DX(\Z_4\times \Z_2;S_\ell,S_r,S_m)$ is given by
$$Spec(\G_{di}) = \{[7]^1, [-1]^{11}, [1+2i]^2, [1-2i]^2\}.$$

Moreover, the spectrum of the bi-Cayley graph $DX(\Z_4\times \Z_2;S_\ell,S_r,S_m)$ is given by
$$Spec(\G_{bi}) = \{[7]^1,[3]^2, [-1]^{13}\}.$$

Now, we compute the spectrum of the di-Cayley sum graph $DX^+(\Z_4\times\Z_2;S_\ell,S_r,S_m)$ and the bi-Cayley sum graph $BX^+(\Z_4\times\Z_2;S_\ell,S_r,S_m)$, and we obtain
$$ Spec(\G_{di}^+) = Spec(\G_{bi}^+) = \{[7]^1,[3]^1,[1]^1,[-1]^8,[-3]^1,[2 \pm \sqrt{5}]^1, 
[-2 \pm \sqrt{5}]^1 
\}. $$
So, $\G_{di}$ is Gaussian integral, $\G_{bi}$ is integral and $\G_{di}^+=\G_{bi}^+$ has real non integral spectrum.
\hfill $\diamond$
\end{exam}

\begin{exam}[\textit{Non-mirror, abelian}] \label{exam: 6.5}
Consider the cyclic group $G=\mathbb{Z}_{12}$ and the subsets 
	$$ S_\ell = \{0,1\}, \qquad S_r = \{0,5\} \qquad \text{and} \qquad S_m = \{1,5\}. $$
	
The associated di-Cayley and bi-Cayley (sum) graphs are given by $\G_{di}^* = DX^*(G; S_\ell, S_r, S_m)$ and $\G_{bi}^* = BX^*(G; S_\ell, S_r, S_m)$. Since $G$ is abelian, the sum graphs $\G_{di}^+$ and $\G_{bi}^+$ are undirected. 
	
The graphical representations of these graphs and the corresponding Cayley graphs $X^*(G,S_\ell)$ and $X^*(G,S_r)$ are as follows:
	
\begin{figure}[H]
	\centering
	\tikzset{
		cayley node/.style={circle, fill=black, inner sep=0pt, minimum size=1.8mm},
		edge S/.style={thick, color=black, ->, >=stealth},
		edge T/.style={thick, color=purple, ->, >=stealth},
		edge T bi/.style={thick, color=purple},
		font label/.style={font=\tiny}
	}
	
	\begin{minipage}[c]{0.6\textwidth}
		\centering
		\begin{tikzpicture}[scale=0.65, every node/.append] 
			\foreach \i in {0,...,11} {
				\node[cayley node, label={[font label]above:{$(\i,1)$}}] (M\i) at (\i*1.3, 2.2) {};
				\node[cayley node, label={[font label]below:{$(\i,0)$}}] (P\i) at (\i*1.3, 0) {};
			}
			
			\foreach \x in {0,...,11} {
				\draw[edge S, -] (P\x) to[out=225, in=315, looseness=6] (P\x);
				\pgfmathtruncatemacro{\nextone}{int(mod(\x + 1, 12))}
				\ifnum\nextone<\x
				\draw[edge S] (P\x) to[bend left=35] (P\nextone);
				\else
				\draw[edge S] (P\x) -- (P\nextone);
				\fi
			}
			
			\foreach \x in {0,...,11} {
				\draw[edge S, -] (M\x) to[out=45, in=135, looseness=6] (M\x);
				\pgfmathtruncatemacro{\nextfive}{int(mod(\x + 5, 12))}
				\ifnum\nextfive<\x
				\draw[edge S] (M\x) to[bend right=40] (M\nextfive);
				\else
				\draw[edge S] (M\x) to[bend left=25] (M\nextfive);
				\fi
			}
			
			\foreach \x in {0,...,11} {
				\pgfmathtruncatemacro{\mone}{int(mod(\x + 1, 12))}
				\pgfmathtruncatemacro{\mfive}{int(mod(\x + 5, 12))}
				
				\draw[edge T] (P\x) -- (M\mone);
				\draw[edge T] (P\x) -- (M\mfive);
				
				\draw[edge T] (M\x) -- (P\mone);
				\draw[edge T] (M\x) -- (P\mfive);
			}
		\end{tikzpicture}
		\caption*{$DX(\mathbb{Z}_{12};S_\ell,S_r,S_m)$.}
		
		\vspace{0.6cm}
		
		\begin{tikzpicture}[scale=0.65, every node/.append] 
			\foreach \i in {0,...,11} {
				\node[cayley node, label={[font label]above:{$(\i,1)$}}] (M\i) at (\i*1.3, 1.8) {};
				\node[cayley node, label={[font label]below:{$(\i,0)$}}] (P\i) at (\i*1.3, 0) {};
			}
			
			\foreach \x in {0,...,11} {
				\draw[edge S, -] (P\x) to[out=225, in=315, looseness=6] (P\x);
				\pgfmathtruncatemacro{\nextone}{int(mod(\x + 1, 12))}
				\ifnum\nextone<\x
				\draw[edge S] (P\x) to[bend left=35] (P\nextone);
				\else
				\draw[edge S] (P\x) -- (P\nextone);
				\fi
			}
			
			\foreach \x in {0,...,11} {
				\draw[edge S, -] (M\x) to[out=45, in=135, looseness=6] (M\x);
				\pgfmathtruncatemacro{\nextfive}{int(mod(\x + 5, 12))}
				\ifnum\nextfive<\x
				\draw[edge S] (M\x) to[bend right=40] (M\nextfive);
				\else
				\draw[edge S] (M\x) to[bend left=25] (M\nextfive);
				\fi
			}
			
			\foreach \x in {0,...,11} {
				\pgfmathtruncatemacro{\mone}{int(mod(\x + 1, 12))}
				\pgfmathtruncatemacro{\mfive}{int(mod(\x + 5, 12))}
				
				\draw[edge T bi] (P\x) -- (M\mone);
				\draw[edge T bi] (P\x) -- (M\mfive);
			}
		\end{tikzpicture}
		\caption*{$BX(\mathbb{Z}_{12};S_\ell,S_r,S_m)$.}
	\end{minipage}%
	\hfill
	\begin{minipage}[c]{0.35\textwidth}
		\centering
		
		\begin{tikzpicture}[scale=1.4]
			\foreach \i in {0,...,11} {
				\pgfmathtruncatemacro{\angle}{90 - \i * 30}
				\node[cayley node, label={[font label]{\angle}:{$\i$}}] (w\i) at ({\angle}:1.1) {};
			}
			\foreach \i in {0,...,11} {
				\pgfmathtruncatemacro{\ang}{90 - \i * 30}
				\draw[edge S, -] (w\i) to[out=\ang-35, in=\ang+35, looseness=6] (w\i);
				\pgfmathtruncatemacro{\nextfive}{mod(\i + 5, 12)}
				\draw[edge S] (w\i) -- (w\nextfive);
			}
		\end{tikzpicture}
		\caption*{$X(\mathbb{Z}_{12},S_r)$.}
		
		\vspace{0.4cm}
		
		\begin{tikzpicture}[scale=1.5]
			\foreach \i in {0,...,11} {
				\pgfmathtruncatemacro{\angle}{90 - \i * 30}
				\node[cayley node, label={[font label]{\angle}:{$\i$}}] (v\i) at ({\angle}:1.1) {};
			}
			\foreach \i in {0,...,11} {
				\pgfmathtruncatemacro{\ang}{90 - \i * 30}
				\draw[edge S, -] (v\i) to[out=\ang-35, in=\ang+35, looseness=6] (v\i);
				\pgfmathtruncatemacro{\next}{mod(\i + 1, 12)}
				\draw[edge S] (v\i) -- (v\next);
			}
		\end{tikzpicture}
		\caption*{$X(\mathbb{Z}_{12},S_\ell)$.}
				
	\end{minipage}
\end{figure}

\vspace{-1cm}

\begin{figure}[H]
	\centering
	\tikzset{
		cayley node/.style={circle, fill=black, inner sep=0pt, minimum size=1.8mm},
		edge S/.style={thick, color=black},
		edge T bi/.style={thick, color=purple},
		font label/.style={font=\tiny}
	}
	
	\begin{subfigure}[c]{0.62\textwidth}
		\centering
		\begin{tikzpicture}[scale=0.65, every node/.append style={transform shape}, cayley node/.append style={minimum size=2.8mm}]
			\foreach \u in {0,...,11} {
				\node[cayley node, label={[font label]above:{$(\u,1)$}}] (B\u) at (\u*1.3, 2.2) {};
				\node[cayley node, label={[font label]below:{$(\u,0)$}}] (A\u) at (\u*1.3, 0) {};
			}
			
			\foreach \u in {0,...,11} {
				\foreach \s in {0,1} {
					\pgfmathtruncatemacro{\v}{mod(\s - \u + 12, 12)}
					\ifnum\u=\v
					\draw[edge S] (A\u) to[out=225, in=315, looseness=6] (A\u);
					\else
					\ifnum\u<\v
					\pgfmathtruncatemacro{\diff}{\v - \u}
					\ifnum\diff=1 
					\draw[edge S] (A\u) -- (A\v); 
					\else
					\draw[edge S] (A\u) to[bend right=35] (A\v); 
					\fi
					\fi
					\fi
				}
			}
			
			\foreach \u in {0,...,11} {
				\foreach \s in {0,5} {
					\pgfmathtruncatemacro{\v}{mod(\s - \u + 12, 12)}
					\ifnum\u=\v
					\draw[edge S] (B\u) to[out=45, in=135, looseness=6] (B\u);
					\else
					\ifnum\u<\v
					\pgfmathtruncatemacro{\diff}{\v - \u}
					\ifnum\diff=1 
					\draw[edge S] (B\u) -- (B\v); 
					\else
					\draw[edge S] (B\u) to[bend left=35] (B\v); 
					\fi
					\fi
					\fi
				}
			}
			
			\foreach \u in {0,...,11} {
				\foreach \t in {1,5} {
					\pgfmathtruncatemacro{\v}{mod(\t - \u + 12, 12)}
					\draw[edge T bi] (A\u) -- (B\v);
				}
			}
		\end{tikzpicture}
		\caption*{$BX^+(\mathbb{Z}_{12};S_\ell,S_r,S_m) = DX^+(\mathbb{Z}_{12};S_\ell,S_r,S_m)$.}
	\end{subfigure}%
	\hfill
	\begin{subfigure}[c]{0.38\textwidth}
		\centering
		
		\begin{tikzpicture}[scale=1]
			\foreach \i in {0,...,11} {
				\pgfmathtruncatemacro{\angle}{90 - \i * 30}
				\node[cayley node, label={[font label]{\angle}:{$\i$}}] (w\i) at ({\angle}:1.1) {};
			}
			
			\foreach \u in {0,...,11} {
				\foreach \s in {0,5} {
					\pgfmathtruncatemacro{\v}{mod(\s - \u + 12, 12)}
					\ifnum\u=\v
					\pgfmathtruncatemacro{\ang}{90 - \u * 30}
					\draw[edge S] (w\u) to[out=\ang-35, in=\ang+35, looseness=6] (w\u);
					\else
					\ifnum\u<\v
					\draw[edge S] (w\u) -- (w\v);
					\fi
					\fi
				}
			}
		\end{tikzpicture}
		\caption*{$X^+(\mathbb{Z}_{12},S_r)$.}
		
		\vspace{0.3cm}
	
		\begin{tikzpicture}[scale=1]
			\foreach \i in {0,...,11} {
				\pgfmathtruncatemacro{\angle}{90 - \i * 30}
				\node[cayley node, label={[font label]{\angle}:{$\i$}}] (v\i) at ({\angle}:1.1) {};
			}
			
			\foreach \u in {0,...,11} {
				\foreach \s in {0,1} {
					\pgfmathtruncatemacro{\v}{mod(\s - \u + 12, 12)}
					\ifnum\u=\v
					\pgfmathtruncatemacro{\ang}{90 - \u * 30}
					\draw[edge S] (v\u) to[out=\ang-35, in=\ang+35, looseness=6] (v\u);
					\else
					\ifnum\u<\v
					\draw[edge S] (v\u) -- (v\v);
					\fi
					\fi
				}
			}
		\end{tikzpicture}
		\caption*{$X^+(\mathbb{Z}_{12},S_\ell)$.}
				
	\end{subfigure}
\end{figure}

Notice that the cover Cayley graphs are isomorphic to directed cycles with loops while the cover Cayley sum graphs are isomorphic to paths with loops, more precisely we have 
	$$ X(\Z_{12}, S_\ell) \simeq X(\Z_{12}, S_r) \simeq  \mathring{\vec{C}}_{12} \qquad \text{and} \qquad X^+(\Z_{12}, S_\ell) \simeq X^+(\Z_{12}, S_r) \simeq\mathring P_{12}.$$
	
The irreducible characters of $\mathbb{Z}_{12}$ are given by 
	$$\chi_k(j) = \omega^{k \cdot j} = e^{\frac{2\pi i}{12} k \cdot j}, \qquad 0 \le k, j \le 11,$$ 
where $\omega = e^{\frac{\pi i}{6}}$. For each character $\chi_k$. We have the following character table of $\Z_{12}$:
	
	\begin{table}[H]
		\centering
		{\small
		\label{tab:caracteres_z12}
		\begin{tabular}{ccccccccccccc}
			\hline
			 & $0$ & $1$ & $2$ & $3$ & $4$ & $5$ & $6$ & $7$ & $8$ & $9$ & $10$ & $11$ \\ 
			\hline
			$\chi_0$ & $1$ & $1$ & $1$ & $1$ & $1$ & $1$ & $1$ & $1$ & $1$ & $1$ & $1$ & $1$ \\
			$\chi_1$ & $1$ & $\omega$ & $\omega^2$ & $i$ & $\omega^4$ & $\omega^5$ & $-1$ & $-\omega$ & $-\omega^2$ & $-i$ & $-\omega^4$ & $-\omega^5$ \\
			$\chi_2$ & $1$ & $\omega^2$ & $\omega^4$ & $-1$ & $-\omega^2$ & $-\omega^4$ & $1$ & $\omega^2$ & $\omega^4$ & $-1$ & $-\omega^2$ & $-\omega^4$ \\
			$\chi_3$ & $1$ & $i$ & $-1$ & $-i$ & $1$ & $i$ & $-1$ & $-i$ & $1$ & $i$ & $-1$ & $-i$ \\
			$\chi_4$ & $1$ & $\omega^4$ & $-\omega^2$ & $1$ & $\omega^4$ & $-\omega^2$ & $1$ & $\omega^4$ & $-\omega^2$ & $1$ & $\omega^4$ & $-\omega^2$ \\
			$\chi_5$ & $1$ & $\omega^5$ & $-\omega^4$ & $i$ & $-\omega^2$ & $\omega$ & $-1$ & $-\omega^5$ & $\omega^4$ & $-i$ & $\omega^2$ & $-\omega$ \\
			$\chi_6$ & $1$ & $-1$ & $1$ & $-1$ & $1$ & $-1$ & $1$ & $-1$ & $1$ & $-1$ & $1$ & $-1$ \\
			$\chi_7$ & $1$ & $-\omega$ & $\omega^2$ & $-i$ & $\omega^4$ & $-\omega^5$ & $-1$ & $\omega$ & $-\omega^2$ & $i$ & $-\omega^4$ & $\omega^5$ \\
			$\chi_8$ & $1$ & $-\omega^2$ & $\omega^4$ & $-1$ & $-\omega^2$ & $\omega^4$ & $1$ & $-\omega^2$ & $\omega^4$ & $-1$ & $-\omega^2$ & $\omega^4$ \\
			$\chi_9$ & $1$ & $-i$ & $-1$ & $i$ & $1$ & $-i$ & $-1$ & $i$ & $1$ & $-i$ & $-1$ & $i$ \\
			$\chi_{10}$ & $1$ & $-\omega^4$ & $-\omega^2$ & $1$ & $-\omega^4$ & $-\omega^2$ & $1$ & $-\omega^4$ & $-\omega^2$ & $1$ & $-\omega^4$ & $-\omega^2$ \\
			$\chi_{11}$ & $1$ & $-\omega^5$ & $-\omega^4$ & $-i$ & $-\omega^2$ & $-\omega$ & $-1$ & $\omega^5$ & $\omega^4$ & $i$ & $\omega^2$ & $\omega$ \\
			\hline
		\end{tabular}}
		\caption{Character table of $\mathbb{Z}_{12}$.}
	\end{table}

Now, we compute the character $\chi_j(S_\ell)$, $\chi_j(S_r)$ and $\chi_j(S_m)$ for $j=0,1,\dots,11$.
	
For $k = 0$, we have that $\chi_0(S_\ell) = 2$, $\chi_0(S_r) = 2$, $\chi_0(S_m) = 2$ and thus $\lambda \in \{4, 0\}$.
		
For $k = 1, 11$, we have that $\chi_1(S_\ell) = 1 + \frac{\sqrt{3}}{2} + \frac{1}{2}i$, $\chi_1(S_r) = 1 - \frac{\sqrt{3}}{2} + \frac{1}{2}i$, $\chi_1(S_m) = i$. 		
		This yields $\lambda \in \{1+i, 1\}$ (for $k=1$) and $\lambda \in \{1-i, 1\}$ (for $k=11$).
		
For $k = 2, 10$, we have that $\chi_2(S_\ell) = \frac{3}{2} + \frac{\sqrt{3}}{2}i$, $\chi_2(S_r) = \frac{3}{2} - \frac{\sqrt{3}}{2}i$, $\chi_2(S_m) = 1$. 
		This yields $\lambda \in \{2, 1\}$.
		
For $k = 3, 9$, we have that $\chi_3(S_\ell) = 1 + i$, $\chi_3(S_r) = 1 + i$, $\chi_3(S_m) = 2i$; and hence $\lambda \in \{1+3i, 1-i\}$ (for $k=3$) and $\lambda \in \{1-3i, 1+i\}$ (for $k=9$).
		
For $k = 4, 8$, we have that $\chi_4(S_\ell) = \frac{1}{2} + \frac{\sqrt{3}}{2}i$, $\chi_4(S_r) = \frac{1}{2} - \frac{\sqrt{3}}{2}i$, $\chi_4(S_m) = -1$; thus $\lambda \in \{1, 0\}$.
		
For $k = 5, 7$, we have that $\chi_5(S_\ell) = 1 - \frac{\sqrt{3}}{2} + \frac{1}{2}i$, $\chi_5(S_r) = 1 + \frac{\sqrt{3}}{2} + \frac{1}{2}i$, $\chi_5(S_m) = i$. 
This yields $\lambda \in \{1+i, 1\}$ (for $k=5$) and $\lambda \in \{1-i, 1\}$ (for $k=7$).
		
For $k = 6$, we have that $\chi_6(S_\ell) = 0$, $\chi_6(S_r) = 0$, $\chi_6(S_m) = -2$ and hence $\lambda \in \{2, -2\}$.
	
Collecting all 24 eigenvalues, we obtain the spectrum of $\G_{di}$:
	$$ Spec(\G_{di}) = \left\{ [4]^1, [2]^3, [1]^8, [0]^3, [-2]^1, [1 \pm 3i]^1, [1 \pm i]^3 \right\}. $$
	
In contrast, for the bi-Cayley graph $\G_{bi} = BX(\mathbb{Z}_{12}; S_\ell, S_r, S_m)$, the eigenvalues are given by
	$$ Spec(\G_{bi}) = \left\{ [4]^1, [2]^3, [1]^4, [0]^3, [-2]^1, [3\pm i]^1, [-1\pm i]^1, [1+\tfrac{\sqrt{7}}{2}\pm \tfrac{i}{2}]^2, [1-\tfrac{\sqrt{7}}{2}\pm \tfrac{i}{2}]^2 \right\}. $$
We check that their sum is 24 in coincidence with the number of loops.

Finally, we compute the spectrum of the di-Cayley sum graph $\G_{di}^+ = DX^+(\mathbb{Z}_{12}; S_\ell, S_r, S_m)$. The spectrum is given by
	$$ Spec(\G_{di}^{+}) = \left\{ [4]^1, [\pm 2]^2, 
[0]^7, [\pm \sqrt{6}]^2, 
[1 \pm \sqrt{3}]^1, [-1 \pm \sqrt{3}]^1, [2 \pm \sqrt{2}]^1, [-2 \pm \sqrt{2}]^1 \right\},$$
whose sum is 4 which is the number of loops.
\hfill $\diamond$
\end{exam}


\begin{exam}[\textit{Non-mirror, non-abelian}] \label{exam: 6.6}
Consider the dihedral group of order $8$,
	$$\mathbb{D}_4=\langle r,s \mid r^4=1,\ s^2=1,\ srs=r^{-1}\rangle.$$
Identifying $r$ with $(1,0)$ and $s$ with $(0,1)$, we consider the subsets

$$	S_\ell = \{(1,0),(3,0)\}, \qquad S_r = \{(0,0),(2,0)\}, \qquad S_m = \{(0,1),(2,1),(1,1),(3,1)\}. $$

Here, since all the characters are real (see the table below) and the connection sets are normal and symmetric, the four graphs $DX^*(\mathbb{D}_4;S_\ell,S_r,S_m)$,  $BX^*(\mathbb{D}_4;S_\ell, S_r, S_m)$, $DX^+(\mathbb{D}_4;S_\ell,S_r,S_m)$ and $BX^+(\mathbb{D}_4;S_\ell,S_r,S_m)$ coincide. 

By Theorem \ref{thm: Spec DX}, the eigenvalues of the di-Cayley graph $DX(\mathbb{D}_4;S_\ell,S_r,S_m)$ are given by expression \eqref{eq: autovalores DX} 
where $\chi$ runs over the irreducible characters of $\mathbb{D}_4$. These characters are given in the following table:
	$$
	\begin{array}{c|ccccc}
		& \{(0,0)\}
		& \{(2,0)\}
		& \{(1,0),(3,0)\}
		& \{(0,1),(2,1)\}
		& \{(1,1),(3,1)\}
		\\
		\hline
		\chi_0
		& 1 & 1 & 1 & 1 & 1
		\\
		\chi_1
		& 1 & 1 & 1 & -1 & -1
		\\
		\chi_2
		& 1 & 1 & -1 & 1 & -1
		\\
		\chi_3
		& 1 & 1 & -1 & -1 & 1
		\\
		\chi_4
		& 2 & -2 & 0 & 0 & 0
	\end{array}
	$$
	
Now, we compute $\chi_i(S_\ell)$, $\chi_i(S_r)$ and $\chi_i(S_m)$ for $i=0,1,2,3,4$.
For $i=0$, we have that
		\begin{align*}
			\chi_0(S_\ell) &= \chi_0(1,0)+\chi_0(3,0) = 1+1 = 2,\\
			\chi_0(S_r) &= \chi_0(0,0)+\chi_0(2,0)= 1+1 = 2,\\
			\chi_0(S_m) &= \chi_0(0,1)+\chi_0(2,1)+\chi_0(1,1)+\chi_0(3,1) = 1+1+1+1 = 4.
		\end{align*} 
Then, $\lambda_{DX}^{\pm}(\chi_0) = \frac{2+2}{2} \pm \sqrt{\frac{2-2}{2}^2+4^2} = 2 \pm 4.$
For $i=1$, we have that
		\begin{align*}
			\chi_1(S_\ell) &= \chi_1(1,0)+\chi_1(3,0) = 1+1 = 2,\\
			\chi_1(S_r) &= \chi_1(0,0)+\chi_1(2,0)= 1+1 = 2,\\
			\chi_1(S_m) &= \chi_1(0,1)+\chi_1(2,1)+\chi_1(1,1)+\chi_1(3,1) = -1-1-1-1 = -4.
		\end{align*} 
Then, 
$\lambda_{DX}^{\pm}(\chi_1) = \frac{2+2}{2} \pm \sqrt{\frac{2-2}{2}^2+(-4)^2} = 2 \pm 4.$
For $i=2$, we have that
		\begin{align*}
			\chi_2(S_\ell) &= \chi_2(1,0)+\chi_2(3,0) = -1-1 = -2,\\
			\chi_2(S_r) &= \chi_2(0,0)+\chi_2(2,0)= 1+1 = 2,\\
			\chi_2(S_m) &= \chi_2(0,1)+\chi_2(2,1)+\chi_2(1,1)+\chi_2(3,1) = 1+1-1-1 = 0.
		\end{align*} 
Then, 
$\lambda_{DX}^{\pm}(\chi_2) = \frac{-2+2}{2} \pm \sqrt{\frac{-2-2}{2}^2+(0)^2} = \pm 2.$
For $i=3$, we have that
		\begin{align*}
			\chi_3(S_\ell) &= \chi_3(1,0)+\chi_3(3,0) = -1-1 = -2,\\
			\chi_3(S_r) &= \chi_3(0,0)+\chi_3(2,0)= 1+1 = 2,\\
			\chi_3(S_m) &= \chi_3(0,1)+\chi_3(2,1)+\chi_3(1,1)+\chi_3(3,1) = -1-1+1+1 = 0.
		\end{align*} 
Then, 
$\lambda_{DX}^{\pm}(\chi_3) = \frac{-2+2}{2} \pm \sqrt{\frac{-2-2}{2}^2+(0)^2} = \pm 2.$
For $i=4$, we have that
		\begin{align*}
			\chi_4(S_\ell) &= \chi_4(1,0)+\chi_4(3,0) = 0+0 = 0,\\
			\chi_4(S_r) &= \chi_4(0,0)+\chi_4(2,0)= 2-2 = 0,\\
			\chi_4(S_m) &= \chi_4(0,1)+\chi_4(2,1)+\chi_4(1,1)+\chi_4(3,1) = 0+0+0+0 = 0.
		\end{align*} 
Then, 
$\lambda_{DX}^{\pm}(\chi_4) = \frac{0+0}{2} \pm \sqrt{\frac{0-0}{2}^2+(0)^2} = 0$, with multiplicity $8$ (note that the irreducible character $\chi_4$ has dimension $2$).
	
Hence, the spectrum of the di-Cayley graph $DX(\mathbb{D}_4;S_\ell,S_r,S_m)$ is
	$$Spec(DX(\mathbb{D}_4;S_\ell,S_r,S_m))=\{[6]^2,[2]^2,[0]^8,[-2]^4\}.$$
Hence, the four graphs coincide and have integral spectrum.

The geometric representation of these graphs is shown below.
\begin{figure}[H]
	\centering
	\tikzset{
		cayley node/.style={circle, fill=black, inner sep=0pt, minimum size=1.8mm},
		edge S/.style={thick, color=black},
		edge T bi/.style={thick, color=purple},
		font label/.style={font=\tiny}
	}
	
	\begin{subfigure}[c]{0.67\textwidth}
		\centering
		\begin{tikzpicture}[scale=1.1, every node/.append style={transform shape}]
			\foreach \i in {0,...,3} {
				\node[cayley node, label={[font label]above:{$(\i,0,1)$}}] (M\i) at (\i*1.1, 2.2) {};
				\node[cayley node, label={[font label]below:{$(\i,0,0)$}}] (P\i) at (\i*1.1, 0) {};
			}
			\foreach \i in {0,...,3} {
				\pgfmathtruncatemacro{\idx}{\i + 4}
				\node[cayley node, label={[font label]above:{$(\i,1,1)$}}] (M\idx) at (4.7 + \i*1.1, 2.2) {};
				\node[cayley node, label={[font label]below:{$(\i,1,0)$}}] (P\idx) at (4.7 + \i*1.1, 0) {};
			}
			
			\draw[edge S] (P0) -- (P1);
			\draw[edge S] (P1) -- (P2);
			\draw[edge S] (P2) -- (P3);
			\draw[edge S] (P3) to[bend left=45] (P0);
			
			\draw[edge S] (P5) -- (P4);
			\draw[edge S] (P6) -- (P5);
			\draw[edge S] (P7) -- (P6);
			\draw[edge S] (P4) to[bend right=45] (P7);
			
			\foreach \x in {0,...,7} {
				\draw[edge S] (M\x) to[out=50, in=130, looseness=5] (M\x);
			}
			\draw[edge S] (M0) to[bend left=35] (M2); 
			\draw[edge S] (M1) to[bend left=35] (M3);
			\draw[edge S] (M4) to[bend left=35] (M6); 
			\draw[edge S] (M5) to[bend left=35] (M7);
			
			\foreach \x in {0,...,3} {
				\foreach \y in {4,...,7} {
					\draw[edge T bi] (P\x) -- (M\y);
					\draw[edge T bi] (P\y) -- (M\x);
				}
			}
		\end{tikzpicture}
		\caption*{$DX^*(\mathbb{D}_4;S_\ell,S_r,S_m) = BX^*(\mathbb{D}_4;S_\ell,S_r,S_m)$.}
\end{subfigure}%
\hfill
\begin{subfigure}[c]{0.3\textwidth}
	\centering
	\begin{tikzpicture}[scale=1.0]
		\foreach \i in {0,...,3} {
			\pgfmathtruncatemacro{\angle}{90 - \i * 45}
			\node[cayley node, label={[font label]{\angle}:{$(\i,0)$}}] (P\i) at ({\angle}:1.2) {};
		}
		\foreach \i in {0,...,3} {
			\pgfmathtruncatemacro{\angle}{270 - \i * 45}
			\node[cayley node, label={[font label]{\angle}:{$(\i,1)$}}] (M\i) at ({\angle}:1.2) {};
		}
		
		\foreach \i in {0,...,3} {
			\pgfmathtruncatemacro{\angleA}{90 - \i * 45}
			\draw[edge S] (P\i) to[out=\angleA-50, in=\angleA+50, looseness=5] (P\i);
			\pgfmathtruncatemacro{\angleB}{270 - \i * 45}
			\draw[edge S] (M\i) to[out=\angleB-50, in=\angleB+50, looseness=5] (M\i);
		}
		\draw[edge S] (P0) -- (P2); \draw[edge S] (P1) -- (P3);
		\draw[edge S] (M0) -- (M2); \draw[edge S] (M1) -- (M3);
	\end{tikzpicture}
	\caption*{$X(\mathbb{D}_4,S_r) = X^+(\mathbb{D}_4,S_r)$.}
	
	\vspace{0.25cm}
	
	\begin{tikzpicture}[scale=1.0]
		\foreach \i in {0,...,3} {
			\pgfmathtruncatemacro{\angle}{90 - \i * 45}
			\node[cayley node, label={[font label]{\angle}:{$(\i,0)$}}] (P\i) at ({\angle}:1.2) {};
		}
		\foreach \i in {0,...,3} {
			\pgfmathtruncatemacro{\angle}{270 - \i * 45}
			\node[cayley node, label={[font label]{\angle}:{$(\i,1)$}}] (M\i) at ({\angle}:1.2) {};
		}
		
		\foreach \i in {0,...,3} {
			\pgfmathtruncatemacro{\next}{mod(\i + 1, 4)}
			\draw[edge S] (P\i) -- (P\next);
			\draw[edge S] (M\i) -- (M\next);
		}
	\end{tikzpicture}
	\caption*{$X(\mathbb{D}_4,S_\ell) = X^+(\mathbb{D}_4,S_\ell)$.}
\end{subfigure}
\end{figure}

This graph is connected with loops. However, the Cayley (sum) covers are disconnected. In fact, $X(\mathbb{D}_4,S_\ell)=X^+(\mathbb{D}_4,S_\ell)$ is the union of two $C_4$'s and $X(\mathbb{D}_4,S_r)=X^+(\mathbb{D}_4,S_r)$ is the union of four $\mathring{P}_2$'s.
\hfill $\diamond$ 
\end{exam}

We conclude this work with a summary of the examples in this section.
\begin{table}[H]
	\centering
	\small
	\renewcommand{\arraystretch}{1.2}
	\begin{tabular}{cccccc}
		\hline
		\textbf{Example} & \textbf{Group $G$} & \textbf{MDCG} & \textbf{Covers} & \textbf{Directedness} & \textbf{Spectrum} \\ \hline
		\ref{exam: 6.1} & $\mathbb{Z}_6$ & yes & connected & mixed (undir. covers) & $\mathbb{Z}[\sqrt{3}], \mathbb{Z}[\sqrt{3}i]$ \\
		\ref{exam: 6.2} & $\mathbb{Z}_{15}$ & yes & disconnected & undirected & $\mathbb{Z}$ \\
		\ref{exam: 6.3} & $\mathbb{Q}_{12}$ & yes & disconnected & mixed (undir. covers) & $\mathbb{Z}, \mathbb{Z}[i]$ \\
		\ref{exam: 6.4} & $\mathbb{Z}_4 \times \mathbb{Z}_2$ & no & connected & mixed / undirected sums & $\mathbb{Z}, \mathbb{Z}[i], \mathbb{Z}[\sqrt{5}]$ \\
		\ref{exam: 6.5} & $\mathbb{Z}_{12}$ & no & connected & mixed (undir. covers) & $\mathbb{Z}[i], \mathbb{Z}[\sqrt{2}, \sqrt{3}], \mathbb{Z}[\frac{\sqrt 7}2,i]$,  \\
		\ref{exam: 6.6} & $\mathbb{D}_4$ & no & disconnected & mixed / undirected sums & $\mathbb{Z}$ \\ \hline
	\end{tabular}
	\caption{Summary of explicit bi/di-Cayley (sum) graphs.}
	\label{tab: summary examples}
\end{table}
In the table, the expression `mixed with undirected covers' has to be understood that the edges between covers are all directed. 
In the last column we just mention the rings where the different spectra lie.

\newpage 

\begin{thebibliography}{XXX}
	\bibitem{AT}
	\textsc{M.\@ Arezoomand, B.\@ Taeri}.
	\textit{On the automorphism group of bi-Cayley graphs over finite groups}. 
	Des.\@ Codes Cryptogr.\@ \textbf{70:3}, 251--260 (2013).
	
	\bibitem{Babai}
	\textsc{L.\@ Babai}.
	\textit{Spectra of Cayley graphs}. 
	J.\@ Comb.\@ Theory, Ser.\@ B \textbf{27}, 180--189 (1979).
			
	\bibitem{Bis}
	\textsc{A.\@ Biswas, J.P.\@ Saha}.
	\textit{Expansion in Cayley graphs, Cayley sum graphs and their twists}, 
	arXiv:2103.05935, (2021).
	
	\bibitem{ChP} 
	\textsc{P.M.\@ Chiapparoli, R.A.\@ Podestá}.
	\textit{Isospectral Cayley graphs with even and odd spectrum}. 	\newline
	arXiv:2601.05510, Jan 2026.

%
	
	\bibitem{DVGM}
	\textsc{M.\@ DeVos, L.\@ Goddyn, B.\@ Mohar, R.\@ \v{S}{\'a}mal}.
	\textit{Cayley sum graphs and eigenvalues of $(3,6)$-fullerenes}. 
	J.\@ Comb.\@ Theory, Ser.\@ B {\bf 99:2}, 358--369 (2009).
	
		
	\bibitem{GL1}
	\textsc{H.\@ Gao, Y.\@ Luo}. 
	\textit{The isomorphism problem for bi-Cayley graphs}. 
	Journal of Combinatorial Optimization \textbf{19:2}, 150--161 (2010). 
	
	\bibitem{GL}
	\textsc{X.\@ Gao, Y.\@ Luo}.
	\textit{The spectrum of semi-Cayley graphs over abelian groups}.
	Linear Algebra Appl.\@ \textbf{432:11}, 2974--2983 (2010).
	

	\bibitem{KK}
	\textsc{I.\@ Kovács, B.\@ Kuzman}.
	\textit{The classification of bi-Cayley graphs over abelian groups}. 
	Eur.\@ J.\@ Comb.\@ \textbf{46}, 122--136 (2015).

%
%

	\bibitem{KMS}
	\textsc{I.\@ Kovács, M.\@ Muzychuk, G.\@ Somlai}. 
	\textit{The isomorphism problem of bi-Cayley graphs over cyclic groups}. 
	Combinatorica \textbf{33:4}, 435--460 (2013).
	
	\bibitem{LF}
	\textsc{H.L.\@ Liu, Y.Q.\@ Feng}. 
	\textit{On the isomorphism of bi-Cayley graphs}. 
	Communications in Algebra \textbf{39:7}, 2452--2467 (2011).
	
	\bibitem{McW-M}
	\textsc{F.J.\@ MacWilliams, H.B.\@ Mann}.
	\textit{On the p-rank of the design matrix of a difference set}. 
	Inf.\@ Control \textbf{12}, 474--488 (1968).
	
	\bibitem{Marusic} 
	\textsc{D.\@ Marušič}.
	\textit{On vertex-transitive graphs of order $p^n$}. 
	Discrete Math.\@ \textbf{67:3}, 313--318 (1987).

	\bibitem{Marusic2} 
	\textsc{D.\@ Marušič}.
	\textit{Strongly regular bicirculants and tricirculants}. 
	Ars Comb.\@ \textbf{25C}, 11--15 (1988).
	
	\bibitem{PV} 
	\textsc{R.A.\@ Podestá, D.E.\@ Videla}.
	\textit{Integral equienergetic non-isospectral unitary Cayley graphs}. 
	Linear Algebra and its Applications \textbf{612}, 42--74 (2021). 
	
%
%
%
%

	\bibitem{RJ}
	\textsc{M.J.\@ Resmini, D.\@ Jungnickel}. 
	\textit{On semi-Cayley graphs}. 
	Journal of Algebraic Combinatorics \textbf{2:4}, 395--405 (1993).

	\bibitem{ZF}
	\textsc{J.X.\@ Zhou, Y.Q.\@ Feng}.
	\textit{The automorphisms of bi-Cayley graphs}. 
	J.\@ Comb.\@ Theory, Ser.\@ B \textbf{116}, 504--532 (2016).	

	\bibitem{Z}
	\textsc{P-H.\@ Zieschang}.
	\textit{Cayley graphs of finite groups}.
	J.\@ Algebra \textbf{118:2}, 447--454 (1988).
	
\end{thebibliography}
\end{document}